\newcommand{\R}{\mathbb{R}}
\newcommand{\e}{\epsilon}
\DeclareMathOperator{\Ric}{Ric}
\DeclareMathOperator{\supp}{supp}
\DeclareMathOperator{\ind}{ind}
\begin{document}

\begin{abstract} In this article we study eternal solutions to the Allen-Cahn equation in the 3-sphere, in view of the connection between the gradient flow of the associated energy functional, and the mean curvature flow. We construct eternal integral Brakke flows that connect Clifford tori to equatorial spheres, and study a family of such flows, in particular their symmetry properties. Our approach is based on the realization of Brakke's motion by mean curvature as a singular limit of Allen-Cahn gradient flows, as studied by Ilmanen \cite{Ilmanen} and Tonegawa \cite{T03}, and it uses the classification of ancient gradient flows in spheres, by K. Choi and C. Mantoulidis \cite{CM}, as well as the rigidity of stationary solutions with low Morse index proved by F. Hiesmayr \cite{H}.
\end{abstract}

\title[Parabolic Allen-Cahn equation on the three-sphere]{Mean curvature flow and low energy solutions of the parabolic Allen-Cahn equation on the three-sphere}
\author{Jingwen Chen, Pedro Gaspar}
\address{Department of Mathematics, The University of Chicago, 5734 S University Ave, Chicago IL, 60615}
\email{jingwch@uchicago.edu, pgaspar@uchicago.edu}
\maketitle

\newtheorem{thm}{Theorem}
\newtheorem*{thm*}{Theorem}
\newtheorem{lem}{Lemma}
\newtheorem{prop}{Proposition}
\newtheorem{coro}{Corollary}

\theoremstyle{definition}
\newtheorem*{defi}{Definition}
\newtheorem*{ex}{Example}
\newtheorem*{rmk}{Remark}

\section{Introduction}
We study low energy solutions of the \emph{parabolic Allen-Cahn equation}
\begin{equation} \label{PAC}
\e \, \partial_t u = \epsilon \Delta_g u - \frac{1}{\epsilon} W'(u).\tag{PAC}
\end{equation}
in the $3$-sphere in connection with the \emph{mean curvature flow} (MCF). Here $W$ is a nonnegative double-well potential with two wells at $\pm 1$, such as $W(u)=(1-u^2)^2/4$.

The Allen-Cahn equation models phase transition and separation phenomena \cite{AC}. The motion of the diffuse transition region -- where $u$ remains bounded away from minima of $W$ -- was studied by several authors, who established the convergence of of this interface, as $\e \downarrow 0$, to a hypersurface which evolves by the mean curvature flow, in different formulations. We mention here \cite{BronsardKohn, Chen, ChenGigaGoto, EvansSonerSouganidis, MottoniSchatzman, RSK, Soner} and the references therein. The convergence to measure-theoretic solutions to the MCF (in the sense of Brakke \cite{Brakke}) was studied by Ilmanen \cite{Ilmanen} and Soner \cite{Soner}, and more recently by Tonegawa \cite{T03}, Mizuno-Tonegawa \cite{MizunoTonegawa}, Pisante-Punzo \cite{PP} and Sato \cite{Sato}. 

In this article, we study such limit flows in the $3$-dimensional round sphere. We show that there are (weak) solution to the MCF connecting Clifford tori to equatorial spheres constructed as the singular limit of solutions to \eqref{PAC} in $S^3$, as $\e \downarrow 0$, and study a family of such limit flows. Some of the motivations for our study are the connections of the Allen-Cahn equation with minimal hypersurfaces from a variational and a dynamical perspective, and recent developments on existence and classification of mean curvature flows, as we briefly discuss next.\medskip

For stationary solutions of \eqref{PAC}, the convergence to minimal hypersurfaces, which are stationary solutions to the MCF, was studied by Modica and Mortola \cite{MM}, Kohn-Sternberg \cite{KS}, Hutchinson-Tonegawa \cite{HT}, Tonegawa-Wickramasekera \cite{TW}, among many others. These results have motivated recent results in the study of minimal hypersurfaces arising as limit interfaces from a variational perspective. 

In \cite{Guaraco}, Guaraco constructed minimal hypersurfaces in closed manifolds using classical variational techniques and the convergence and regularity results of \cite{TW,Wickramasekera} as an alternative to the Almgren-Pitts min-max approach \cite{Pitts}. Very recently, Chodosh and Mantoulidis \cite{ChodoshMantoulidis} obtained curvature estimates for level sets of stable stationary solutions of \eqref{PAC} and obtained strong convergence results of stationary solutions of \eqref{PAC} in 3-dimensional manifolds to minimal surfaces.

The results of Chodosh and Mantoulidis settle a strong version of the \emph{Multiplicity One Conjecture} proposed by F. Cod\'a Marques and A. Neves \cite{MNMult} regarding the multiplicity of min-max minimal hypersurfaces, for generic Riemannian metrics. By applying their results to the stationary solutions of \eqref{PAC} constructed by Guaraco and the second named author in \cite{GG}, Chodosh and Mantoulidis proved the \emph{Morse index conjecture} of Marques and Neves \cite{MNMult}, in dimension 3. We point out that this conjecture was recently solved, in dimensions $3\leq n \leq 7$, by Marques and Neves \cite{MNMorse}, using the solution to the multiplicity one conjecture, by X. Zhou \cite{Zhou}.

Concerning the mean curvature flow, the study of existence, regularity and classification for certain flows in manifolds has been an active topic in geometric analysis. Huisken and Sinestrari \cite{HuiskenSinestrari} showed that closed mean convex ancient flows in the sphere $S^n$ are shrinking spherical caps, provided it satisfies a curvature pinching condition (see \cite{RisaSinestrari} and \cite{LXZ}, and the references therein, for generalizations to higher codimension and to other space forms). In \cite{BIS}, P. Bryan, M.N. Ivaki and J. Scheuer classified ancient convex fully nonlinear flows on $S^n$. We also mention the work of R. Haslhofer and O. Hershkovitz \cite{HH} on the singularities of mean convex MCFs in general ambient manifolds.

In \cite{CM}, K. Choi and C. Mantoulidis proved that ancient smooth MCFs in the round sphere $S^n$ with low area are steady or shrinking spheres. Under more relaxed area bounds, they also proved that such ancient flows in $S^3$ are steady or shrinking equators or Clifford tori, along its unstable directions.\medskip

From a variational and dynamical viewpoint, the equation \eqref{PAC} may be seen as the (negative) gradient flow of the \emph{Allen-Cahn energy functional}
    \[E_{\epsilon}(u)= \int_{M} \left(\frac{\epsilon}{2} |\nabla_{g} u|^2 + \frac{1}{\epsilon} W(u)\right).\]
In this article, we study orbits of this gradient flow in $S^3$ which connect low energy stationary solutions, and describe the mean curvature flows they originate, as $\e \downarrow 0$. We prove:

\begin{thm} \label{thm1}
For sufficiently small $\e>0$, there are eternal solutions $\{u_\e\}$ of \eqref{PAC} such that 
    \[u_\e^{-\infty} = \lim_{t \to -\infty} u_\e(\cdot, t) \quad \text{and} \quad u_\e^{+\infty} = \lim_{t \to +\infty} u_\e(\cdot ,t)\]
are the symmetric critical points of $E_\e$ which accumulate on the Clifford torus and on an equatorial spheres, respectively, as $\e \downarrow 0$. Furthermore, the limit $\{\mu_t\}_{t \in \R}$ of the associated measures
    \[\mu_{\e,t}:=\left(\frac{\e}{2} \, |\nabla u_\e(\cdot, t)|^2 + \frac{W(u_\e(\cdot, t))}{\e} \right)\,d\mu_g\]
is a unit-density Brakke flow on $S^3$ which converges to an equatorial sphere and to the same Clifford torus, as $t \to \pm \infty$, respectively.
\end{thm}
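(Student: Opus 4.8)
\emph{Proof strategy.} I would construct, for each small $\e>0$, the solution $u_\e$ as a heteroclinic orbit of the $E_\e$–gradient flow inside a fixed symmetry class, and then pass to the limit $\e\downarrow0$ using the work of Ilmanen and Tonegawa together with the classification results of Choi--Mantoulidis and Hiesmayr. Write $\R^4=\R^2\times\R^2$, let $\tau$ be the isometry of $S^3$ swapping the two factors, pick a further reflection $g$ commuting with $\tau$, and set $G=\langle\tau,g\rangle$. Since $W$ is even, the closed subspace $V=\{u\in H^1(S^3):u\circ\tau=-u,\ u\circ g=u\}$ is invariant under \eqref{PAC}, and --- importantly --- the constants $\pm1$ are not in $V$. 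With $g$ chosen appropriately, the $G$–symmetric Clifford torus $T=\{x_1^2+x_2^2=x_3^2+x_4^2\}$ and the $G$–symmetric equator $\Pi$ are the only $G$–symmetric representatives of their isometry classes, $\tau$ exchanges the two sides of each and $g$ preserves them. Hence, by known gluing/min-max constructions (see e.g.\ \cite{GG}), there are for $\e$ small $G$–symmetric critical points $v_\e,w_\e\in V$ of $E_\e$ whose energy measures converge, as $\e\downarrow0$, to (a fixed multiple of) the area measures of $T$ and of $\Pi$. A Jacobi–operator computation on $T$ and $\Pi$ --- using that the whole nullity of the Clifford torus comes from ambient isometries, and that the unstable direction of the equator (translating the pole) is $\tau$–even --- shows that inside $V$ the point $v_\e$ is non-degenerate of Morse index $1$, while $w_\e$ is a non-degenerate local minimum of $E_\e|_V$; and since $\mathrm{Area}(T)=2\pi^2>4\pi=\mathrm{Area}(\Pi)$ we get $E_\e(w_\e)<E_\e(v_\e)$ for small $\e$.

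\textbf{The eternal orbit.} Because $v_\e$ has index $1$ in $V$, the unstable set $W^u_V(v_\e)$ is an arc consisting of $v_\e$ together with two orbits $\gamma_\e^\pm$, defined for all $t\in\R$ (bounded solutions of \eqref{PAC} exist globally), with $\alpha$–limit $v_\e$. As $E_\e$ is real-analytic, the {\L}ojasiewicz--Simon inequality gives that $\gamma_\e^\pm$ converges, as $t\to+\infty$, to a single critical point $p_\e\in V$ with $0<E_\e(p_\e)<E_\e(v_\e)$ --- positive because $p_\e\in V$ cannot be a constant. For $\e$ small, any critical point of $E_\e|_V$ with energy in $(0,E_\e(v_\e))$ concentrates, by the compactness theorem of Hutchinson--Tonegawa \cite{HT}, on a stationary integral varifold in $S^3$ whose mass exceeds that of one equator and is at most that of the Clifford torus; by the area gap in $S^3$ (only equators and Clifford tori occur with so little area, each with multiplicity one) and by $G$–symmetry this varifold is $\pm\Pi$, so that $p_\e\in\{w_\e,-w_\e\}$ --- by the rigidity of low Morse index solutions of \cite{H}, or directly by uniqueness in the Lyapunov--Schmidt reduction. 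Thus $u_\e:=\gamma_\e^+$ is an eternal solution with $u_\e^{-\infty}=v_\e$ accumulating on $T$ and $u_\e^{+\infty}\in\{w_\e,-w_\e\}$ accumulating on $\Pi$.

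\textbf{The limit flow.} After translating time so that $E_\e(u_\e(\cdot,0))=\tfrac12\bigl(E_\e(v_\e)+E_\e(w_\e)\bigr)$, the map $t\mapsto E_\e(u_\e(\cdot,t))$ is nonincreasing with values in a fixed bounded interval with endpoints (a constant times) $4\pi$ and $2\pi^2$. By Ilmanen \cite{Ilmanen} and Tonegawa \cite{T03}, along a subsequence $\e_k\downarrow0$ the measures $\mu_{\e_k,t}$ converge, for every $t$, to an eternal integral Brakke flow $\{\mu_t\}_{t\in\R}$ on $S^3$, with $\mu_t(S^3)=\lim_k E_{\e_k}(u_{\e_k}(\cdot,t))$ nonincreasing in $t$ and, at $t=0$, \emph{strictly} between the two endpoint values; hence $\{\mu_t\}$ is non-constant. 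The index bound $1$ inherited from the previous step, together with the rigidity of \cite{H}, shows $\{\mu_t\}$ is unit density, and the classification of ancient flows in $S^3$ of Choi--Mantoulidis \cite{CM} then identifies it as the mean curvature flow leaving the Clifford torus along its unstable directions. Its mass being non-constant, monotonicity forces $\mu_t(S^3)\uparrow(\mathrm{const})\,2\pi^2$ as $t\to-\infty$ and $\mu_t(S^3)\downarrow(\mathrm{const})\,4\pi$ as $t\to+\infty$; since $\{\mu_t\}$ is $G$–invariant and the stationary limits are unit-density minimal surfaces of these areas, they must be the unique $G$–symmetric ones, namely $T$ and $\Pi$. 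This yields the statement along the subsequence, hence in general.

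\textbf{The main obstacle.} The crux is the last step: proving that the limit Brakke flow does not lose mass (so it is genuinely eternal and non-trivial), is \emph{unit density at every time} --- not only at the stationary ends, where \cite{H} applies directly --- and is exactly ``the Clifford torus along its unstable directions'' rather than another configuration permitted by \cite{CM}; this forces the rigidity theorems, the index bound, and the area gap in $S^3$ to be combined, and requires {\L}ojasiewicz--Simon–type convergence of the mean curvature flow at its two ends. A secondary difficulty, already present at fixed $\e$, is the degeneracy of the Clifford torus (nullity $4$, from ambient isometries): the symmetry class $V$ is designed precisely so that $v_\e$ becomes non-degenerate of index $1$ there, which is what makes the heteroclinic orbit in the second step immediate to produce.
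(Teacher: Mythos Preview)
Your overall strategy --- restrict to a symmetry class $V$ where the Clifford-torus solution has index $1$, produce the heteroclinic directly, then pass to the Brakke limit --- is different from the paper's, which works with the full $5$-dimensional unstable manifold and uses a connectedness argument (Proposition~\ref{prop_parabolicflow}) to find orbits that avoid $\pm1$. Your route is attractive because it makes the construction of the eternal orbit essentially immediate. However, there is a genuine gap in your identification of the backward limit of the Brakke flow, and a second issue with the unit-density claim.

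\textbf{The backward limit.} Your assertion that ``with $g$ chosen appropriately, the $G$-symmetric Clifford torus $T$ \ldots\ is the only $G$-symmetric representative of its isometry class'' is false for the natural choices of $G$. The paper proves a Lemma showing that there are \emph{three} Clifford tori invariant under the isometries $s$, $r_{(e_1-e_2)/\sqrt2}$, and $r_{(e_3-e_4)/\sqrt2}$ (namely $T_c$, $T_+$, $T_-$); any $G=\langle\tau,g\rangle$ contained in this group will have at least as many invariant tori. Moreover, a single reflection $g$ commuting with $\tau=s$ typically exchanges the two solid tori bounded by $T_c$, so $v_\e\circ g=-v_\e$ and $v_\e\notin V$; one is forced to take $g$ of the form $r_v\circ r_w$, which lands precisely in the situation above. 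The paper resolves this non-uniqueness not by symmetry but by a continuity argument: it chooses a test function $f$ on $G_2(S^3)$ separating $T_c$ from $T_\pm$, finds intermediate times $s_j$ along the $\e_j$-flow where $\frac{1}{\sigma}V_{\e_j,s_j}(f)=0$, and shows that the translated flows converge to a stationary Clifford torus with $f$-value $0$, a contradiction. This step is the actual content of Section~\ref{sec:sym} and is missing from your sketch. A similar (though less severe) issue arises for the forward limit: there are \emph{two} $G$-symmetric equators, and the paper again uses a continuity argument to pin down the limit.

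\textbf{Unit density.} Your sentence ``the index bound $1$ inherited from the previous step, together with the rigidity of \cite{H}, shows $\{\mu_t\}$ is unit density'' does not work: Hiesmayr's rigidity concerns stationary solutions of \eqref{AC}, not the Brakke flow at intermediate times, and there is no Morse index for $\mu_t$ when it is not stationary. The paper instead uses the mass bound $\|\Sigma_t\|(S^3)\le 2\pi^2$, the density estimate of Marques--Neves, and the parity information from Lemma~\ref{odd-even-density} (the cyclic mod $2$ structure of \cite{W09}) to feed into \cite[Lemma 5.8]{CM}; this is what forces the stationary limits to be multiplicity-one equators or Clifford tori.
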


We observe that the flow $\mu_t$ given by the Theorem above is smooth for sufficiently large $|t|$, by Brakke's Local Regularity Theorem \cite{IlmanenRegularization}. In particular, for large negative $t$, such flow belongs to the family of ancient flows constructed by Choi-Mantoulidis in \cite[Theorems 1.6 and 1.7]{CM} which converge exponentially quickly to the Clifford torus as $t \to -\infty$. The $L^1$-integrability condition follows from Proposition 5.3 (see also Remark 4.13) in \cite{CM}, as the Jacobi fields of the Clifford torus are generated by one-parameter families of isometries of $S^3$. \smallskip

We also describe a 2-parameter family of solutions that satisfy the conclusions of Theorem \ref{thm1}, using the symmetries of the gradient flows of the energy and the corresponding mean curvature flows.

\begin{thm} \label{thm2}
Let $\{\e_j\}$ be a sequence of positive parameters such that $\e_j \downarrow 0$. Passing to a subsequence, there exist $2$-parameter families $\{u_j(a)\}_{a \in S^1 \times S^1}$ of solutions of \eqref{PAC}, parametrized by a $2$-torus, which satisfy the conclusions of Theorem \ref{thm1} (for a fixed Clifford torus as their backward limit). These gradient flows, as well as the family of equatorial spheres which are obtained as forward limits, depend continuously and equivariantly with respect to (rotations of) $a \in S^1 \times S^1$.
\end{thm}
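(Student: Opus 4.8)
The plan is to obtain Theorem~\ref{thm2} from Theorem~\ref{thm1} by letting the isometry group of $S^3$ act on the eternal solutions produced there. Recall that $\mathrm{Isom}(S^3)=O(4)$ acts on solutions of \eqref{PAC} by precomposition and preserves $E_\e$: if $u$ solves \eqref{PAC} then so does $u\circ Q$ for every $Q\in O(4)$, the associated energy measures transform by push-forward, $\mu_{\e,t}^{u\circ Q}=(Q^{-1})_{\#}\mu_{\e,t}^{u}$, and hence so do the limit Brakke flows together with their $t\to\pm\infty$ limits. Identifying $\R^4=\mathbb C^2$ and writing $\Sigma=\{(z,w):|z|^2=|w|^2=\tfrac12\}$ for the Clifford torus, the subtorus
\[
\mathbb T=\bigl\{\,R_a\colon(z,w)\mapsto(a_1z,\,a_2w)\ \big|\ a=(a_1,a_2)\in S^1\times S^1\,\bigr\}\subset SO(4)
\]
preserves $\Sigma$ and acts on it freely and transitively. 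First I would apply Theorem~\ref{thm1} along the sequence $\e_j$, passing to the subsequence it provides and, after composing with fixed ambient isometries if necessary, arranging that all the backward-limit critical points accumulate on this one $\Sigma$; this yields eternal solutions $u_j:=u_{\e_j}$ with backward limit $u_j^{-\infty}$ (the symmetric critical point accumulating on $\Sigma$) and forward limit $u_j^{+\infty}$ (a symmetric critical point accumulating on an equatorial sphere $S^{(j)}$).

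The family is then $u_j(a):=u_j\circ R_a$ for $a\in S^1\times S^1$. Each $u_j(a)$ solves \eqref{PAC} because $R_a$ is an isometry, the map $a\mapsto u_j(a)$ is smooth, and it is equivariant: $u_j(b\cdot a)=u_j(a)\circ R_b$. Since $R_a$ preserves $\Sigma$ and $u_j^{-\infty}$ is invariant under the symmetries of $\Sigma$, hence under $\mathbb T$, all the $u_j(a)$ share the \emph{same} backward limit $u_j^{-\infty}$, giving the prescribed fixed Clifford torus. The forward limit of $u_j(a)$ is $u_j^{+\infty}\circ R_a$; it accumulates on the equatorial sphere $S^{(j)}_a:=R_a^{-1}(S^{(j)})$ and is invariant under $R_a^{-1}\,\mathrm{Sym}(S^{(j)})\,R_a=\mathrm{Sym}(S^{(j)}_a)$, so it is again a symmetric critical point accumulating on an equatorial sphere, and $a\mapsto S^{(j)}_a$ is continuous and $\mathbb T$-equivariant. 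Likewise the limit Brakke flow attached to $u_j(a)$ is the push-forward $(R_a^{-1})_{\#}\{\mu_t\}$ of the one in Theorem~\ref{thm1}, a unit-density Brakke flow connecting $S^{(j)}_a$ to $\Sigma$ and depending continuously and equivariantly on $a$. Hence each $u_j(a)$ satisfies all the conclusions of Theorem~\ref{thm1}, and, together with the spheres $S^{(j)}_a$ and these flows, they form the asserted continuous, equivariant families.

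The only step that is not mere bookkeeping --- and the main obstacle --- is to guarantee that this is a genuine \emph{two}-parameter family, i.e.\ that the $\mathbb T$-orbit of $S^{(j)}$ is $2$-dimensional. The orbit of the equatorial sphere with unit normal $v=(\zeta,\omega)\in\mathbb C^2$ is $\{(a_1\zeta,a_2\omega)\}$, a $2$-torus exactly when $\zeta\neq0\neq\omega$; the degenerate normals (one component zero) correspond to the equatorial spheres that contain a core circle of $\Sigma$ and form a subfamily of $\mathbb{RP}^3$ of codimension $\ge1$. So I would need to check that the forward-limit sphere $S^{(j)}$ delivered by Theorem~\ref{thm1} can be taken in this generic locus --- this reflects the fact that, in the Choi--Mantoulidis description of ancient flows leaving $\Sigma$, the unstable directions of the Clifford torus that involve both circle factors nontrivially drive the flow toward equatorial spheres with both complex components of the normal nonvanishing. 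Granting this, the stabilizer of $S^{(j)}$ in $\mathbb T$ is finite (equal to $\{(1,1),(-1,-1)\}\cong\mathbb Z/2$), so $a\mapsto S^{(j)}_a$, and a fortiori $a\mapsto u_j(a)$, has finite fibres and the family is honestly parametrized by a $2$-torus, equivariantly under rotations of $a$. Everything else follows at once from the $O(4)$-equivariance recorded in the first paragraph.
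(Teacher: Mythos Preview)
Your overall strategy coincides with the paper's: both produce the family by letting the torus $\mathbb{T}\simeq S^1\times S^1$ of rotations preserving $T_c$ act on a single eternal solution (this is exactly the content of Lemma~\ref{lem:equivariance}), using the $\mathbb{T}$-invariance of $u_\e^{-\infty}$ to fix the backward limit while the forward-limit equators rotate with $a$.

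The step you flag as ``the main obstacle'' and then grant is, however, the crux of the argument, and the paper supplies a proof you do not. To ensure the normal $(\zeta,\omega)$ of $S^{(j)}$ has $\zeta\neq 0\neq\omega$, the paper invokes the additional isometry $s(x_1,x_2,x_3,x_4)=(x_3,x_4,x_1,x_2)$, which swaps the two circle factors and satisfies $u_\e^{-\infty}\circ s=-u_\e^{-\infty}$. Choosing the initial unstable direction $q=(0,\tfrac12,\tfrac12,-\tfrac12,-\tfrac12)\in\mathcal{O}$ so that $s(q)=-q$ makes $\mathscr{S}(r(\e)q)$ sign-antisymmetric under $s$; hence so is its forward limit, which forces $s(y_q)=\pm y_q$ and therefore $y_1^2+y_2^2=y_3^2+y_4^2=\tfrac12$. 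This is Proposition~\ref{forward}, and it is what makes $p\mapsto y_p$ injective and the family genuinely $2$-dimensional. Without this (or an equivalent) argument your proof is incomplete: nothing you have written rules out that the solution handed to you by Theorem~\ref{thm1} has its forward-limit equator passing through a core circle of $T_c$. (Incidentally, once both components are nonzero the stabilizer of the \emph{oriented} equator---hence of $u_j$ itself---in $\mathbb{T}$ is trivial, not $\mathbb{Z}/2$: the element $(-1,-1)$ is the antipodal map, which preserves $S^{(j)}$ setwise but sends $u_j^{+\infty}$ to $-u_j^{+\infty}$, so $a\mapsto u_j(a)$ is already injective.)
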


\subsection*{Outline of the proof}

Recall that the least area minimal surface on $S^3$ are the totally geodesic equatorial spheres. Furthermore, by the solution of the Willmore conjecture by Marques and Neves \cite{MN}, the second least area minimal surface in $S^3$ is the Clifford torus. Stationary solutions $\{u_\e^{-\infty}\}$ which have this minimal surface as their limit interface can be constructed using a minimization and reflection procedure, see \cite{CGGM}. The rigidity of such solutions was studied by Hiesmayr in \cite{H}.

One can construct ancient solutions $\{u_\e(\cdot, t)\}$ of the (negative) gradient flow of the energy functional which quickly converge backward in time to a critical point using a contraction mapping argument, see \cite{ChodoshMantoulidis}. These solutions describe the unstable manifold of integrable critical points. Then, a topological argument shows that many of these solutions converge, as $t\to +\infty$, to a least energy unstable solutions of the Allen-Cahn equation, which are symmetric critical points that accumulate on equatorial spheres, by \cite{CGGM}.

The convergence of $\{u_\e(\cdot, t)\}$ to an integral Brakke flow on $S^3$ can be derived from \cite{Ilmanen} and \cite{Sato,T03}. This flow is \emph{cyclic mod 2}, in the sense of White \cite{W09}. We then use the area bounds for the limit interfaces obtained from $u_\e^{\pm \infty}$, and the symmetries of the negative eigenfunctions to the Jacobi operator of the Clifford torus (which are inherited by the corresponding solutions of \eqref{PAC}), to describe the forward and backward limit of this flow using the classification of low area stationary varifolds in $S^3$. 

\subsection*{Organization}
In Section \ref{sec:preliminaries}, we state some results concerning the Allen-Cahn equation, minimal surfaces in the $3$-sphere, and the mean curvature flow which will be used in the sequel. In Section \ref{sec:gradient}, we construct low energy eternal solutions of \eqref{PAC} as solutions to the negative gradient flow of the associated energy functional. In Section \ref{sec:main} we study the Brakke flow given as the singular limit of such solutions, and provide a preliminary description of its backward and forward limits. In Section \ref{sec:sym}, we conclude the proof of the main theorems using symmetries of the stability operator of the Clifford torus (and the corresponding solution to the Allen-Cahn equation) to study the asymptotic limits of certain gradient flows of the energy and the corresponding Brakke flow.

\subsection*{Acknowledgements}

We would like to thank Andr\'e Neves for his support, and for many invaluable discussions and suggestions. PG was partially supported by Prof. Neves' Simons Investigator Award.

\subsection*{Notation} \label{notation}
We use the following notation throughout the paper:

\begin{center}
\begin{tabular}{ll}
    $\sigma$ & the energy constant $\int_{-1}^1 \sqrt{W(t)/2} dt$.\\[2pt]
     $\e_2$ & the constant $\lambda_2(S^3)^{-1/2}$ (Section \ref{rigidity}).\\[2pt]
    $W^{1,2}(M)$ & Sobolev space of functions $u \in L^2(M)$ with weak gradient $|\nabla u| \in L^2(M)$.\\[2pt]
    $T_c$ & Clifford torus in $S^3$, \eqref{Clifford}.\\[2pt]
    $u_{\e}(\cdot, t)$,
    $\mathscr{S}(a)(\cdot,t)$ & gradient flow of $E_\e$, solutions of \eqref{PAC} (Sections \ref{sec:gradient} and \ref{sec:sym}).\\[2pt]
    $u_{\e}^{\pm \infty}$ & forward and backward (subsequential) limits of $u_{\e}(\cdot, t)$.\\[2pt]
    $\frac{1}{\sigma} V_{\e,t}$ & associated varifold of $u_{\e}(\cdot, t)$ (Section \ref{ACminimal}).\\[2pt]
    $\frac{1}{\sigma} \mu_{\e,t}$ & corresponding Radon measure of $\frac{1}{\sigma} V_{\e,t}$.\\[2pt]
    $V^{\pm \infty}$ & subsequential limit of the associated varifold of $u_{\e}^{\pm \infty}$.\\[2pt]
    $\Sigma_t$ & limit of $\frac{1}{\sigma} \mu_{\e,t}$, with associated varifold $\frac{1}{\sigma} V_t$.\\[2pt]
    $V_{\pm \infty}$& subsequential limit of $V_t$ as $t \to \pm \infty$\\[2pt]
    $\Theta(V,x)$ & density of a varifold $V$ at a point $x$.
\end{tabular}
\end{center}

\section{Preliminaries} \label{sec:preliminaries}

\subsection{The Allen-Cahn equation, induced varifolds and convergence} \label{ACminimal}

\begin{defi}
A function $W \in C^{\infty}(\R)$ is a \emph{(symmetric) double-well potential} if:
\begin{enumerate}
    \item[(1)] $W$ is nonnegative and vanishes precisely at $\pm1$;
    \item[(2)] $W$ satisfies $W'(0) = 0$, $W''(0) \neq 0$, and $tW'(t) < 0$ for $|t| \in (0, 1)$;
    \item[(3)] $W''(\pm1)>0$;
    \item[(4)] $W(t) = W(-t)$.
\end{enumerate}
\end{defi}

The standard example of a double-well potential is the function $W(t) = \frac{1}{4}(1 - t^2)^2$. Hereafter, we fix such a potential $W$.

\begin{defi}
Let $(M^n,g)$ be a Riemannian manifold. We define the \emph{Allen-Cahn energy} on $\Omega$ by:
\[E_{\epsilon}(u):= \int_{\Omega} \left(\frac{\epsilon}{2} |\nabla_{g} u|^2 + \frac{1}{\epsilon} W(u)\right) d\mu_{g}, \ \ u \in W^{1,2}(M),\]
where $d\mu_g$ is the volume measure with respect to $g$. Note that this quantity is finite provided $W(u) \in L^1(M)$.
\end{defi}

\begin{rmk}
We implicitly assume, in addition to hypotheses (1)-(4) above, that $W$ is bounded. This ensures that the energy functional $E_\e$ is smooth in $W^{1,2}(M)$, and allows us to use existence results and standard estimates for critical points and gradient flows of $E_\e$. We emphasize that this does not affect the arguments explored in this work (in which $M$ is assumed to be compact), as the objects we consider satisfy a priori bounds $|u|<1$, by the maximum principle, so any double-well potential can be modified outside of $[-2,2]$ to meet this requirement.
\end{rmk}

One can check that $u$ is a critical point of $E_{\epsilon}$ on a closed manifold $(M^n,g)$ if and only if $u$ (weakly) solves the \emph{elliptic Allen-Cahn equation}:
\begin{equation} \label{AC}
\epsilon^2 \Delta_g u -  W'(u)=0 \quad \text{on} \ M. \tag{AC}
\end{equation}
For the standard double well potential, the Allen-Cahn equation becomes $\epsilon^2 \Delta_g u = u^3 - u$. 

We write $\sigma = \int_{-1}^1 \sqrt{W(t)/2} dt$. This is the energy of the \emph{heteroclinic solution} $\mathbb{H}_\e(t)$ of \eqref{AC} on $\R$, namely, the unique bounded solution in $\R$ (modulo translation) such that $\mathbb{H}_\e(t) \to \pm 1$ when $t \to \pm \infty$. We refer to \cite[Section 1.3]{ChodoshMantoulidis} for more on this one-dimensional solution.

Recall that the \emph{Morse index} of a solution $u$ of \eqref{AC} (as a critical point of $E_\e$), denoted  $\ind_\e(u)$, is the index of the quadratic form given by the second variation of the energy $E_\e$ at $u$, namely
    \[d^2E_\e[u](\phi,\psi) := \int_M \e\langle \nabla \phi, \nabla \psi \rangle + \frac{1}{\e}W''(u)\phi\psi\ d\mu_g, \]
for $\psi,\phi \in C^{\infty}(M)$. Note that $\ind_\e(u)$ is the number of negative eigenvalues of the linear operator 
    \[\mathcal{L}_{\e,u}(f) = \Delta f - \frac{W''(u)}{\e^2}f,\]
counted with multiplicity. In particular, $\ind_\e(u)$ is finite (note we assumed $M$ to be compact). We also recall that $u$ is said to be a \emph{stable} solution if $\ind_\e(u)=0$.\medskip

In order to describe some convergence results for solutions of \eqref{PAC} and its elliptic counterpart, we will use some notions and notation from Geometric Measure Theory. We refer to \cite{HT} or \cite{TonegawaBook} for a description of such objects and some of their key properties; see also the \nameref{notation} table above.

The classical variational convergence for solutions of \eqref{AC} was studied in the works of Modica and Mortola \cite{M85,MM}, who proved that the Allen–Cahn energy functional $\Gamma$-converges to the \emph{perimeter functional}, a generalization of the $(n-1)$-dimensional volume defined on the space of domains of finite perimeter. In particular, the interfaces of locally minimizing solutions of \eqref{AC} (namely the sets where these functions are bounded away from $\pm 1$) converge, as $\epsilon \downarrow 0$, to local minimizers of the area of the perimeter (and are thus regular away from a singular set of dimension $\leq (n-8)$ ).

A convergence result for families of solutions with uniformly bounded energy and index follows from the combined work of J. Hutchinson, Y. Tonegawa, N, Wickramasekera and M. Guaraco \cite{HT,TW,Guaraco}, which is based on the deep regularity theory developed by Wickramasekera \cite{Wickramasekera}. 

Before recalling this convergence result, we note that given $\e>0$ and a sufficiently regular function $u$ on $M$ (so that almost every level set is a regular hypersurface), we can consider the \emph{associated $(n-1)$-varifolds} $V_{\e,u}$ defined by
    \begin{equation} \label{def:varifold}
        V_{\e,u}(\phi) = \frac{1}{2}\int_{M\cap \{\nabla u \neq 0\}} \phi(x,T_x\{u=u(x)\})\cdot\left(\frac{\e|\nabla u(x)|^2}{2}+\frac{W(u(x))}{\e}\right)\,d\mu_g(x)
    \end{equation}
for any continuous function $\phi$ defined in the Grassmannian manifold $G_{n-1}(M)$, where $V_{\e,u}(\phi)$ denotes the integral of $\phi$ on $G_{n-1}(M)$ with respect to $V_{\e,u}$. We write $\mu_{\e,u}=\|V_{\e,u}\|$ for the associated Radon measure on $M$ (the \emph{weight measure} of $V_{\e,u}$). In the case where $\{u_j\}$ are solutions to \eqref{AC} or \eqref{PAC}, with $\e = \e_j \downarrow 0$, we will write ${V_{\e_j,t}=V_{\e_j,u_j(\cdot,t)}}$ and ${\mu_{\e_j,t} := \mu_{\e_j,u_j(\cdot,t)}}$.

\begin{thm*}[\cite{HT,TW,Guaraco}]
Let $(M^n,g)$ be a closed Riemannian manifold. Let $\{u_j\}$ be a sequence of solutions of \eqref{AC} with $\e=\e_j \downarrow 0$. Suppose that $\sup_j E_{\e_j}(u_j) < \infty$. Then we can find a (not relabeled) subsequence of $u_j$ such that $V_{\e_j}$ converge to a stationary $(n-1)$ varifold $V$ on $M$ such that $\frac{1}{\sigma}V$ is integral. Moreover,
    \[\frac{1}{\sigma}\|V\|(M) = \lim_{j \to \infty} \frac{1}{\sigma} \|V_{\e_j}\|(M) = \lim_{j \to \infty}\frac{1}{2\sigma}E_{\e_j}(u_{j}),\]
and $u_{j}$ converges uniformly to $\pm 1$ in compact subsets of $M \setminus \supp \|V\|$.

Furthermore, if $n\geq 3$ and if $\sup_j \ind_{\e_j}(u_j)<\infty$, then $\supp\|V\|$ is a smooth, embedded, minimal hypersurface in $M$ away from a closed set of Hausdorff dimension $\leq (n-8)$.
\end{thm*}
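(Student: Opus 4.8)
The plan is to assemble the statement from three bodies of work: the compactness, rectifiability and integrality theory of Hutchinson--Tonegawa \cite{HT}; the stability/regularity upgrade of Tonegawa--Wickramasekera \cite{TW}, resting on Wickramasekera's regularity theorem \cite{Wickramasekera}; and Guaraco's localization of the Morse index \cite{Guaraco}, which reduces the bounded-index case to the stable case. Throughout one uses the a priori bound $|u_j|\le 1$, valid for solutions of \eqref{AC} by the maximum principle. For the compactness: by \eqref{def:varifold} the weight measure satisfies $\|V_{\e_j,u_j}\|(M)=\tfrac12 E_{\e_j}(u_j)$, uniformly bounded by hypothesis, so the compactness theorem for varifolds of bounded mass on the compact $M$ yields a subsequence (not relabeled) with $V_{\e_j,u_j}\rightharpoonup V$; and since $W(u_j)/\e_j$ is bounded in $L^1$ while $W$ vanishes only at $\pm1$, one also gets $u_j\to\pm1$ a.e.\ and in $L^1_{\mathrm{loc}}$ away from $\supp\|V\|$.

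The heart of the argument is to control the \emph{discrepancy measure} $\xi_{\e_j}=\big(\tfrac{\e_j}{2}|\nabla u_j|^2-\tfrac{1}{\e_j}W(u_j)\big)\,d\mu_g$. Integrating the first variation of $E_{\e_j}$ by parts and using that $u_j$ solves \eqref{AC}, one finds that $\delta V_{\e_j,u_j}(X)$ is governed entirely by $\xi_{\e_j}$ paired with $\operatorname{div}X$ and $X$. Following Hutchinson--Tonegawa, a monotonicity formula for the scaled energy density ratios, together with the structure of the one-dimensional heteroclinic $\mathbb{H}$, shows that $\xi_{\e_j}\to 0$ as Radon measures. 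It follows that $V$ is stationary; that $\|V_{\e_j,u_j}\|\rightharpoonup\|V\|$, so that testing against the constant $1$ on the compact $M$ gives the energy identity $\tfrac1\sigma\|V\|(M)=\lim\tfrac1\sigma\|V_{\e_j,u_j}\|(M)=\lim\tfrac{1}{2\sigma}E_{\e_j}(u_j)$; and, by a blow-up analysis at points of positive density, that $\tfrac1\sigma V$ is integral.

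Two items remain. For the uniform convergence: on a ball $B\subset\subset M\setminus\supp\|V\|$ one has $E_{\e_j}(u_j;B)\to 0$, and a clearing-out / $\e$-regularity lemma for \eqref{AC} (using interior elliptic estimates and $|u_j|\le1$) upgrades the $L^1_{\mathrm{loc}}$ convergence to uniform convergence of $u_j$ to a single value $+1$ or $-1$ on each component of $M\setminus\supp\|V\|$. For the regularity under an index bound: when every $u_j$ is stable, \cite{TW} shows $\tfrac1\sigma V$ is a stable integral varifold, and Wickramasekera's theorem \cite{Wickramasekera} gives that $\supp\|V\|$ is a smooth embedded minimal hypersurface away from a closed set of Hausdorff dimension $\le n-8$ (the hypothesis $n\ge 3$ enters here); when $\ind_{\e_j}(u_j)\le k$, Guaraco's argument produces a finite set $\mathcal{Y}\subset M$ with $\#\mathcal{Y}\le k$ such that the $u_j$ are eventually stable on a small ball about every point of $M\setminus\mathcal{Y}$, and since a finite set has zero $W^{1,2}$-capacity in dimension $n-1\ge 2$ the stability inequality passes to all of $M$, so Wickramasekera's theorem applies globally.

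The main obstacle is controlling the discrepancy measures $\xi_{\e_j}$ and extracting the integer quantization of the limit density: this is precisely where the monotonicity formula for the scaled Allen--Cahn energy and the blow-up classification of \cite{HT} are indispensable, and it is the step that does not reduce to soft compactness.
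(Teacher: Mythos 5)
This statement is quoted in the paper as a background result attributed to \cite{HT,TW,Guaraco} and is not proved there; your outline accurately reconstructs how those references establish it (Hutchinson--Tonegawa for the vanishing of the discrepancy, stationarity, the energy identity and integrality; Tonegawa--Wickramasekera plus Wickramasekera's regularity theory for the stable case; Guaraco's localization of the index to finitely many points and the capacity argument for the bounded-index case). The proposal is correct and takes essentially the same route as the cited sources.
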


The minimal surface $\supp\|V\|$ is often called a \emph{limit interface} obtained from $u_j$.

The parabolic counterpart of the measure-theoretic result above was investigated by T. Ilmanen \cite{Ilmanen}, and H.M. Soner \cite{Soner}, among many others. For solutions of the parabolic equation \eqref{PAC}, the weak limit as $\e \downarrow 0$ is a weak solution of the mean curvature flow. The main notion we will employ in this work are \emph{Brakke flows}, measure theoretic solutions of the mean curvature flow defined in terms of varifolds, as introduced by Brakke in \cite{Brakke}. We mention here the  book \cite{TonegawaBook} for a thorough introduction on the Brakke flow.

We state below the main convergence result we will use in the present article, which follows from \cite{Ilmanen} and the work of Tonegawa \cite{T03} (see also \cite{Sato} and \cite{TT}):

\begin{thm*}
Let $(M^n,g)$ be a closed Riemannian manifold. Let $\{u_j\}$ be a sequence of solutions to \eqref{PAC} on $M \times [t_0,\infty)$ with $\e=\e_j \downarrow 0$. Suppose that there exist constants $c_0,E_0>0$ such that
\begin{enumerate}
    \item[(a)] $\sup_{M\times [t_0,\infty)}|u_j| \leq c_0$, for all $j$,
    \item[(b)] $E_{\e_j}(u_j(\cdot,t)) \leq E_0$, for all $t\geq t_0$ and all $j$, and
    \item[(c)] $\int_{M\times(t_0,\infty)}\e_j|\partial_t u_j|^2\,d\mu_g \leq E_0$, for all $j$.
\end{enumerate}
Write $\mu_{\e_j,t} = \mu_{\e,u_j(\cdot, t)}$, for every $t\geq t_0$ and every $j$. Then, passing to a subsequence (not relabeled), there are Radon measures $\{\mu_t\}_{t\geq t_0}$ such that
\begin{enumerate}
    \item[(i)] $\mu_{\e_j,t} \to \mu_t$  as Radon measures on $M$, and
        \[\frac{1}{2\sigma}\lim_{j \to\infty} E_{\e_j}(u_j(\cdot,t)) = \frac{1}{\sigma}\lim_{j \to \infty}\|\mu_{\e_j,t}\|(M) = \frac{1}{\sigma}\|\mu_t\|(M),\]
    for every $t \in [t_0,\infty)$.
    \item[(ii)] For a.e. $t>t_0$, $\mu_t$ is $(n-1)$-rectifiable, and its density is $N(x)\sigma$, for $\mu_t$-a.e. $x \in M$, where $N(x)$ is a nonnegative integer.
    \item[(iii)] $\mu_t$ satisfies the mean curvature flow in the sense of Brakke, namely:
        \[\overline{D}_t \int_M \phi\,d\mu_t \leq \int_M (-\phi)\|H_t\|^2 + \langle\nabla \phi, H_t \rangle \, d\mu_t,\]
    for any $C^2$ function $\phi\geq 0$. Here $\overline{D}_t$ denotes the upper derivative, and $H_t$ is the generalized mean curvature vector of $\mu_t$.
\end{enumerate}
\end{thm*}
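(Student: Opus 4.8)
The plan is to follow the standard passage from the Allen--Cahn gradient flow to Brakke's motion by mean curvature, due to Ilmanen \cite{Ilmanen} and Tonegawa \cite{T03} (with the refinements of \cite{Sato}), which I would organize into three stages: compactness of the induced measures; equipartition of energy (vanishing of the discrepancy) together with rectifiability and integrality of the time-slices; and passage to the limit in the energy identity to obtain Brakke's inequality. For the compactness, I would first observe that hypothesis (b) gives, for every fixed $t$, the uniform bound $\|\mu_{\e_j,t}\|(M)=\tfrac12 E_{\e_j}(u_j(\cdot,t))\le\tfrac12 E_0$; by weak-$*$ compactness of Radon measures of bounded mass on the compact manifold $M$, together with a diagonal extraction over a countable dense set of times $\mathcal T\subset[t_0,\infty)$, I obtain a subsequence (not relabeled) with $\mu_{\e_j,t}\to\mu_t$ for all $t\in\mathcal T$. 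To promote this to every $t$ I would use the gradient-flow energy identity
\begin{equation*}
\frac{d}{dt}\int_M\phi\,d\mu_{\e_j,t}=-\frac12\int_M\phi\,\e_j(\partial_t u_j)^2\,d\mu_g-\frac12\int_M\e_j\,(\partial_t u_j)\,\langle\nabla u_j,\nabla\phi\rangle\,d\mu_g,
\end{equation*}
which, combined with (b), (c) and the Cauchy--Schwarz inequality, shows that $t\mapsto\int_M\phi\,d\mu_{\e_j,t}$ is of bounded variation on compact time intervals, uniformly in $j$; a Helly-type selection then makes the limit well-defined for all $t\ge t_0$. Testing the convergence with $\phi\equiv 1$ on the closed manifold $M$ gives $\|\mu_{\e_j,t}\|(M)\to\|\mu_t\|(M)$, which together with the exact identity $\tfrac1\sigma\|\mu_{\e_j,t}\|(M)=\tfrac1{2\sigma}E_{\e_j}(u_j(\cdot,t))$ yields part (i).

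For the second stage, the crucial analytic input I would invoke is Ilmanen's localized Huisken-type monotonicity formula for solutions of \eqref{PAC}: combined with the space-time curvature bound (c), it forces the discrepancy measures $\xi_{\e_j,t}:=\bigl(\tfrac{\e_j}{2}|\nabla u_j|^2-\tfrac1{\e_j}W(u_j)\bigr)\,d\mu_g$ to vanish in the limit (in $L^1_{loc}$ in space-time, hence as Radon measures for a.e.\ $t$). Granting this equipartition, the associated varifolds $V_{\e_j,t}$ converge, for a.e.\ $t$, to a varifold $V_t$ with $\tfrac1\sigma V_t$ integral; since moreover, by (c) and the stress--energy identity recalled below, the slices $V_{\e_j,t}$ have generalized first variation controlled in $L^2(\mu_{\e_j,t})$ for a.e.\ $t$, I can apply the rectifiability and integrality assertions of the static theorem of \cite{HT,TW,Guaraco} quoted above to the time-slices, obtaining part (ii): for a.e.\ $t$, $\mu_t$ is $(n-1)$-rectifiable with density $N(x)\sigma$, $N(x)\in\mathbb{Z}_{\ge 0}$.

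For Brakke's inequality I would pass to the limit in the integrated energy identity between good times $t_1<t_2$: the left-hand side tends to $\int_M\phi\,d\mu_{t_2}-\int_M\phi\,d\mu_{t_1}$. For the right-hand side I would use the stress--energy identity $\operatorname{div}T_{\e_j}(\cdot,t)=-\e_j(\partial_t u_j)\nabla u_j$, with $T_{\e_j}=\bigl(\tfrac{\e_j}{2}|\nabla u_j|^2+\tfrac1{\e_j}W(u_j)\bigr)g-\e_j\,du_j\otimes du_j$, which identifies $\e_j(\partial_t u_j)\nabla u_j\,d\mu_g$ --- modulo an error bounded by the vanishing discrepancy --- with the first-variation measure $\delta V_{\e_j,t}$ up to a universal factor; hence, for a.e.\ $t$, it converges as vector-valued Radon measures to $-H_t\,d\mu_t$, the generalized mean curvature of the limit. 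A Cauchy--Schwarz argument using $\e_j|\nabla u_j|^2\,d\mu_g\to 2\mu_t$ (a consequence of equipartition), together with lower semicontinuity, then gives $\liminf_j\int_{t_1}^{t_2}\!\int_M\phi\,\e_j(\partial_t u_j)^2\,d\mu_g\,dt\ge\int_{t_1}^{t_2}\!\int_M\phi\,\|H_t\|^2\,d\mu_t\,dt$ and $\int_{t_1}^{t_2}\!\int_M\e_j(\partial_t u_j)\langle\nabla u_j,\nabla\phi\rangle\,d\mu_g\,dt\to\int_{t_1}^{t_2}\!\int_M\langle\nabla\phi,H_t\rangle\,d\mu_t\,dt$, up to the normalization by $\sigma$. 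Dividing by $t_2-t_1$ and letting $t_2\downarrow t_1$ produces (iii) with the upper derivative $\overline D_t$; the a.e.\ nature of the convergence and the appearance of $\overline D_t$ are precisely the features for which Brakke's definition is designed.

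I expect the decisive difficulty to be concentrated in the second stage: proving the monotonicity formula and the resulting vanishing of the discrepancy, and upgrading varifold convergence at a.e.\ time to rectifiability and $\sigma$-quantized integrality of the slices --- in particular, controlling the exceptional set of ``bad times'' and ruling out loss of energy in the limit. Once equipartition is available, the first-variation and dissipation identities pass to the limit by soft functional-analytic arguments, so the remaining work in the third stage is largely bookkeeping. Since the theorem as stated is essentially a repackaging of \cite{Ilmanen,T03,Sato}, in the write-up I would mostly cite these references for the hard estimates and only spell out how hypotheses (a)--(c) feed into the three stages above.
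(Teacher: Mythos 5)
The paper does not prove this statement: it is quoted verbatim as a known theorem, with the proof deferred entirely to \cite{Ilmanen}, \cite{T03} (and \cite{Sato}, \cite{TT}). Your outline reproduces the standard three-stage strategy of those references correctly --- the energy identity you write for $\frac{d}{dt}\int_M\phi\,d\mu_{\e_j,t}$ is right with the paper's normalization, the uniform-BV/Helly argument legitimately upgrades convergence at a dense set of times to all $t$, and you correctly locate the hard analysis in the monotonicity formula and the vanishing of the discrepancy. The one imprecision worth flagging is in stage two: the ``static theorem'' of \cite{HT,TW,Guaraco} as quoted applies to exact solutions of the elliptic equation, whereas the time-slices $u_j(\cdot,t)$ only satisfy \eqref{AC} up to the inhomogeneity $\e_j\partial_t u_j$, controlled in $L^2(\mu_{\e_j,t})$ for a.e.\ $t$ by (c) and Fubini; what you actually need is the version of the rectifiability/integrality theorem for Allen--Cahn layers with $L^2$-bounded generalized first variation, which is the content of \cite{HT} and \cite{T03} rather than the exact-solution statement. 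Since you defer the hard estimates to the references in any case, this is a matter of citing the right form of the result, not a gap in the argument.
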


\begin{rmk}
The normalization chosen in \eqref{def:varifold} differs by a factor of $\frac{1}{2}$ when compared to \cite{Ilmanen, Soner}, and it agrees with \cite{Sato} (observe the different definition of the normalization constant $\sigma$). Moreover, even though the associated varifold defined in \cite{T03} apparently differs from \eqref{def:varifold}, the convergence of the discrepancy measures $\left|\frac{\e|\nabla u|^2}{2} - \frac{W(u)}{\e}\right|\,d\mu_g$ to zero in the elliptic case \cite{HT} and for a.e. time in the parabolic case \cite{Ilmanen} ensure that the limit varifolds, as $\e \downarrow 0$ agree.
\end{rmk}

\begin{rmk} We mention here the recent results of \cite{NW2,NW1}, by H.T. Nguyen and S. Wang, regarding the strong convergence of solutions to \eqref{PAC} in the Euclidean space. This parallels the results of \cite{ChodoshMantoulidis} for the parabolic setting, under entropy bounds or multiplicity one conditions. Even though we do not use these results in the present article, we point out that one could employ them to obtain uniform curvature bounds (for sufficiently negative time) for the transition layers of the solutions to \eqref{PAC} studied here.
\end{rmk}

\subsection{Minimal surfaces in \texorpdfstring{$S^3$}{S3} and rigidity of solutions} \label{rigidity}

The second variation formula for the area functional (see e.g. \cite{S}) shows that any embedded minimal surface in $S^3$ cannot be stable (that is, its Morse index is $\geq 1$). It is well known that totally geodesic equators in $S^3$ are the closed minimal surfaces of least area. Simons characterized the equator as the only minimal surface in $S^3$ with index one. Furthermore, Urbano \cite{U} proved that if $\Sigma \subset S^3$ is a minimal surface which is not an equator, then its index is at least $5$, and the \emph{Clifford torus} is the only closed embedded minimal surface whose index is precisely $5$. We recall that this is, up to isometry, the minimal surface 
\begin{equation} \label{Clifford}
T_c = \left\{(x,y,z,w) \in \R^4: x^2 + y^2 = z^2 + w^2 = \frac{1}{2}\right\} \subset S^3.
\end{equation}

By the solution of the Willmore conjecture, by Marques-Neves \cite{MN}, this is the embedded, non-totally geodesic, minimal surface of least area $2 \pi^2$ in $S^3$. Moreover, it is the unique minimally embedded torus in $S^3$ up to isometries, by the Hsiang–Lawson's conjecture, recently solved by S. Brendle \cite{Brendle}. We also mention here that if $T$ is a $2$-dimensional stationary integral varifold in $S^3$ with $\|T\|(S^3) \leq 2 \pi^2$ such that its associated $\mathbb{Z}_2$ chain $[T]$ has $\partial[T] = 0$ (see \cite{W09}), then $T$ is either a multiplicity one equator or a Clifford torus, see e.g. \cite[Lemma 5.8]{CM}. Hereafter, we will refer to any such minimal torus as a Clifford torus, while reserving the notation $T_c$ for the specific torus described above. \smallskip

We now describe the counterparts of some of these results in the context of the Allen-Cahn equation. First, we recall that $u = \pm 1$ are the unique global minimizers for $E_{\epsilon}$. Low energy solutions often display variational characterizations and inherit many geometrical properties from the domain. The symmetry properties of \emph{least energy unstable} critical points of $E_\e$ -- also referred as \emph{ground state solutions} -- in a sphere were studied in \cite{CGGM}. Such solutions are radially symmetric with respect to some point, and they vanish precisely on an equatorial sphere.

Solutions of the Allen-Cahn equation whose energy density accumulate on $T_c$ can be constructed using gluing techniques (see \cite{CG}), or by minimization and reflection. Concretely, for each $\e \in (0,\e_2)$, where 
    \begin{equation} \label{e_2}
        \e_2 = 1/\lambda_2(S^3)^{1/2},
    \end{equation}
there is a unique positive function which minimizes $E_\e$ on one of the two isometric domains in $S^3$ bounded by $T_c$ among functions that vanish on the boundary. Using the reflection that maps one of the domains onto the other, one extends this minimizer to a solution of \eqref{AC} in $S^3$ which vanishes precisely on $T_c$. We refer to \cite{CGGM} or \cite{H} for the detailed construction. Note that the uniqueness of this minimizers implies that this solution inherits the symmetries of the Clifford torus.

As noted in \cite{CG}, the index estimates from \cite{CM} and \cite{GIndex} imply that the solutions produced by either of these methods also have Morse index $5$, for sufficiently small $\e>0$. Very recently, F. Hiesmayr \cite{H} characterized solutions whose nodal sets are equators or Clifford torus as the unique nonradial solutions of Morse index $\leq 5$ in $S^3$ with bounded energy, namely:

\begin{thm*}[\cite{H}]
Given any $C>1$, there exists $\e_3(C) \in (0,\e_2)$ with the following property. For any $\e \in (0,\e_3)$, any solution of \eqref{AC} with Morse index $\leq 5$ and energy $\leq C$ is a ground state solution or a symmetric solution with nodal set on some Clifford torus.
\end{thm*}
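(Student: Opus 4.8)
The plan is to argue by contradiction, using the compactness of low-energy, low-index solutions of \eqref{AC} to reduce the statement to the classification of minimal surfaces of low index in $S^3$ (Simons, Urbano \cite{U}), together with a uniqueness statement modulo the ambient isometry group. Suppose the conclusion fails for some $C>1$: there are $\e_j\downarrow 0$ and solutions $u_j$ of \eqref{AC} with $\ind_{\e_j}(u_j)\le 5$ and $E_{\e_j}(u_j)\le C$, none of which is a ground state solution or a symmetric solution with nodal set on a Clifford torus. By the maximum principle $|u_j|<1$, so the hypotheses of the convergence theorem of \cite{HT,TW,Guaraco} hold; passing to a subsequence, the varifolds $\tfrac1\sigma V_{\e_j,u_j}$ converge to a stationary integral varifold $\tfrac1\sigma V$ whose support $\Sigma$ is a smooth, closed, embedded minimal surface in $S^3$ (with empty singular set, since $n=3<8$), with $\tfrac1{2\sigma}E_{\e_j}(u_j)\to\tfrac1\sigma\|V\|(S^3)\le\tfrac{C}{2\sigma}$. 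Since any closed embedded surface in $S^3$ is two-sided and any two distinct closed minimal surfaces in $S^3$ intersect (Frankel's theorem, as $\Ric_{S^3}>0$), $\Sigma$ is connected and two-sided, so $\tfrac1\sigma V=m\,|\Sigma|$ for some integer $m\ge 1$.

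Next I would invoke lower semicontinuity of the Morse index under this convergence, in the spirit of \cite{GIndex} (see also \cite{CM}), in the form $\liminf_j \ind_{\e_j}(u_j)\ge m\cdot\ind(\Sigma)$. With $\ind_{\e_j}(u_j)\le 5$ this forces $m\cdot\ind(\Sigma)\le 5$. Now $\ind(\Sigma)=1$ exactly when $\Sigma$ is an equatorial sphere, while by Urbano \cite{U} one has $\ind(\Sigma)\ge 5$ for every non-equatorial minimal surface, with equality only for the Clifford torus, so $\ind(\Sigma)\ge 6$ for any minimal surface that is neither an equator nor a Clifford torus. Hence either $\Sigma$ is a Clifford torus with $m=1$, or $\Sigma$ is an equator with $m\in\{1,\dots,5\}$.

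The central step, and the one I expect to be the main obstacle, is to upgrade this to \emph{multiplicity one}. A limit $m\,|\Sigma|$ with $m\ge 2$ corresponds to a solution $u_j$ whose transition region consists of $\sim m$ pairwise disjoint, nearly parallel, nearly minimal sheets about $\Sigma$; one wants to rule this out using the positivity of $\Ric_{S^3}$ — that is, to prove an Allen-Cahn analogue of Frankel's theorem to the effect that a solution of \eqref{AC} of bounded index on a positively curved manifold cannot develop two disjoint, well-separated, approximately minimal interfaces (or, alternatively, to prove a sharper index estimate for such multi-layer configurations). The difficulty is that this must be carried out for the diffuse equation, controlling the exponentially small interaction between consecutive sheets and the error coming from the non-minimality of the level sets, rather than for genuine smooth minimal surfaces; the bounded-index hypothesis should be what makes the argument go through. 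Granting $m=1$, $\Sigma$ is an equator or a Clifford torus with multiplicity one.

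Finally, in the multiplicity-one case the diffuse spectrum of $u_j$ converges to the Jacobi spectrum of $\Sigma$ (again \cite{GIndex}) — the index and nullity of $\Sigma$ being $1$ and $3$ for the equator, $5$ and $4$ for the Clifford torus, both compatible with $\ind_{\e_j}(u_j)\le 5$ — and by the graphical and curvature estimates for low-index transition layers the nodal set of $u_j$ is, for $j$ large, a single smooth normal graph over $\Sigma$ converging to it in $C^{2,\alpha}$. On the other hand, over every equator there is a ground state solution, and over every Clifford torus the symmetric solution obtained by minimization and reflection recalled above (see \cite{CGGM}); these form a single orbit of the isometry group of $S^3$, of dimension $3$ resp.\ $4$, which is non-degenerate modulo the symmetry group, the kernel of the linearized Allen-Cahn operator being exactly the tangent space to the orbit by the spectral correspondence. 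A Lyapunov-Schmidt reduction then identifies every solution of \eqref{AC} whose nodal set is $C^{2,\alpha}$-close to $\Sigma$, for $\e$ small, with a member of this orbit; hence $u_j$ is a ground state solution or a symmetric solution with nodal set on a Clifford torus, contradicting the choice of $u_j$. This proves the theorem, and shows the threshold $5$ to be sharp, as it is exactly the index bound in Urbano's classification.
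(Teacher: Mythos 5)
First, a point of order: the paper does not prove this statement; it is quoted verbatim from Hiesmayr \cite{H}, so there is no internal proof to compare against. Judged on its own, your outline does capture the broad architecture of the actual argument (compactness of the varifolds, Frankel-type connectedness, Urbano's classification \cite{U}, and a final uniqueness step), and you have correctly identified where the real difficulty lies. But you have not closed that difficulty, and it is the heart of the theorem.

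Concretely, the step you introduce with ``Granting $m=1$'' is a genuine gap, not a technicality: ruling out higher-multiplicity limits for bounded-index solutions of \eqref{AC} on a $3$-manifold is precisely the main theorem of Chodosh--Mantoulidis \cite{ChodoshMantoulidis} (curvature estimates for stable solutions, plus a sheeting/multiplicity-one analysis for finite index), and Hiesmayr's proof rests on it together with an Allen--Cahn analogue of Frankel's theorem; one cannot simply posit such a result and proceed. Relatedly, the inequality $\liminf_j \ind_{\e_j}(u_j)\ge m\cdot\ind(\Sigma)$ that you use to exclude a multiplicity-$m$ Clifford torus is not one of the available semicontinuity results: \cite{GIndex,HIndex} give $\liminf_j \ind_{\e_j}(u_j)\ge \ind(\supp\|V\|)$ with no multiplicity factor, so without the multiplicity-one theorem you cannot even restrict the list of candidate limits as you claim. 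Finally, the closing Lyapunov--Schmidt step asserts without proof that the nullity of the linearized operator at the symmetric solutions equals the dimension of the isometry orbit for small $\e$; this integrability/non-degeneracy statement (and, for the equator, the uniqueness of ground states from \cite{CGGM}) is itself a theorem that must be invoked or established, not a consequence of eigenvalue convergence alone, since eigenvalues converging to zero could a priori remain negative or positive. In short: right skeleton, but the two load-bearing steps --- multiplicity one and the terminal uniqueness --- are stated as desiderata rather than proved or properly cited.
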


\subsection{Ancient mean curvature flows in \texorpdfstring{$S^3$}{S3}}

In \cite{CM}, among many other results, K. Choi and C. Mantoulidis classified smooth ancient MCFs in $S^3$ with area below $2\pi^2$ plus a small $\delta>0$. Essentially, these flows are either steady or shrinking equators along spheres of latitude, or steady or decreasing tori along one of its $5$ linearly independent directions (see \cite[Corollary 1.5]{CM}). 

In the same article, Choi and Mantoulidis constructed ancient solutions to certain quasilinear gradient flows that converge backward in time --- that is, as $t \downarrow -\infty$ --- to a  critical point $u$ for the associated energy with finite Morse index $p$. These solutions locally describe the unstable manifold of this critical point, hence they are parametrized by a $p$-dimensional disk \cite[Theorem 3.3]{CM}, and they are the unique solutions that converge backward to $u$ that satisfy an integrability condition related to the \emph{\L ojasiewicz-Simon inequality}, as proved in \cite[Theorem 4.1 and Proposition 4.12]{CM}. In particular, this assumption holds true for analytic functionals, and also whenever the energy functional is Morse-Bott at the corresponding energy level near the critical point $u$, see \cite{FM}. 

We refer to Sections 3 and 4 in \cite{CM} for the complete statements, and to Sections \ref{sec:gradient} and \ref{sec:sym} for the main consequences to the (negative) gradient flow of the Allen-Cahn energy.

\section{Gradient flows of the energy functional} \label{sec:gradient}

As noted above, the Clifford torus $T_c \subset S^3$ can be obtained as the limit of a sequence of solutions to the Allen-Cahn equation \eqref{AC}, extracted from a family $\{u^{-\infty}_{\epsilon}\}_{\epsilon \in (0, \epsilon_2)}$ of solutions whose nodal sets are precisely $T_c$, and such that $E_{\epsilon} (u^{-\infty}_{\epsilon}) \to 2\sigma \cdot \mathrm{Area}(T_c)$ as $\epsilon \downarrow 0$.

We want to show that we can connect these solutions to a \emph{nonconstant} solution of the Allen-Cahn equation with smaller energy using the negative gradient flow of the energy functional. The key ingredient is the following result:

\begin{prop}\label{prop_parabolicflow} Let $(M^n,g)$ be a compact Riemannian manifold with $\Ric_g \geq 0$, and let $u_\e^{-\infty}$ be a nonconstant solution of the Allen-Cahn equation on $M$ with Morse index $\mathrm{ind}_\e(u_\e^{-\infty})\geq 2$. There exist (infinitely many) eternal solutions $u:M \times \R \to \R$ of the parabolic equation \eqref{PAC} on $(M,g)$ such that $E_{\e}(u_\e(\cdot,t))$ is strictly decreasing,
	\[\|u_\e(\ \cdot \ , t) - u_\e^{-\infty}\|_{W^{1,2}(M)} \to 0, \quad \text{as} \quad t \to -\infty,\]
and, for any sequence $t_k \uparrow +\infty$, the functions $u_\e(\cdot,t_k)$ do not converge to the constant critical points $\pm 1$ of $E_\e$.
\end{prop}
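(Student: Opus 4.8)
The plan is to realize the eternal solutions as heteroclinic-type orbits of the negative gradient flow, constructed by flowing out along the unstable manifold of $u_\e^{-\infty}$ and then controlling the forward limit. First I would invoke the construction of the unstable manifold: since $u_\e^{-\infty}$ is a solution of \eqref{AC} with $\ind_\e(u_\e^{-\infty}) = p \geq 2$, the linearized operator $\mathcal{L}_{\e,u_\e^{-\infty}}$ has at least two negative eigenvalues, so the local unstable manifold $\mathcal{W}^u$ of $u_\e^{-\infty}$ for the gradient flow of $E_\e$ is a smooth embedded disk of dimension $p \geq 2$ through $u_\e^{-\infty}$ in $W^{1,2}(M)$. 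By the standard backward-uniqueness/contraction argument (as in \cite{CM}, \cite{ChodoshMantoulidis}), every trajectory on $\mathcal{W}^u$ satisfies $\|u_\e(\cdot,t) - u_\e^{-\infty}\|_{W^{1,2}} \to 0$ as $t \to -\infty$, in fact exponentially. This gives infinitely many (indeed a $(p-1)$-parameter family, after quotienting by the flow) eternal solutions with the prescribed backward limit; global existence forward in time is automatic since $E_\e$ is bounded below (by $0$) and the flow exists for all positive time on a compact manifold with a real-analytic—or at least smooth and coercive—functional.

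Next I would establish strict monotonicity of the energy. Along any gradient flow trajectory, $\frac{d}{dt}E_\e(u_\e(\cdot,t)) = -\e\int_M |\partial_t u_\e|^2\,d\mu_g \leq 0$, with equality at time $t$ only if $u_\e(\cdot,t)$ is a stationary solution. Since every trajectory on $\mathcal{W}^u \setminus \{u_\e^{-\infty}\}$ leaves $u_\e^{-\infty}$ and cannot return to it (backward uniqueness), and since $u_\e^{-\infty}$ is the unique critical point in a small neighborhood corresponding to that energy level — or, more robustly, since if $u_\e(\cdot,t_0)$ were stationary then the trajectory would be constant and hence equal to $u_\e^{-\infty}$ for all time, contradicting that it lies on the nonconstant part of $\mathcal{W}^u$ — the energy is strictly decreasing. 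In particular $E_\e(u_\e(\cdot,t)) < E_\e(u_\e^{-\infty})$ for all $t \in \R$.

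The main point, and the step I expect to require the most care, is ruling out that $u_\e(\cdot, t_k) \to \pm 1$ along some sequence $t_k \uparrow +\infty$. The idea is to use the Ricci curvature hypothesis $\Ric_g \geq 0$: on such a manifold the constants $\pm 1$ are the only stable critical points and, crucially, one has a quantitative statement that any nonconstant critical point of $E_\e$ has energy bounded below by $2\sigma \cdot (\text{something positive, e.g. the area of a minimal hypersurface or a first-eigenvalue estimate})$, while more importantly the energy gap between $\pm 1$ and any nonconstant solution is controlled. Concretely, I would argue as follows: suppose $u_\e(\cdot, t_k) \to 1$ (say) in $W^{1,2}$; then $E_\e(u_\e(\cdot,t_k)) \to E_\e(1) = 0$, so by monotonicity $E_\e(u_\e(\cdot,t)) \to 0$ as $t \to +\infty$. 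But the Łojasiewicz–Simon inequality near the critical point $1$ — valid since $W$ (hence $E_\e$) is analytic near the constants, and $1$ is a nondegenerate minimum on a manifold with $\Ric \geq 0$ (the Hessian $\mathcal{L}_{\e,1} = \Delta - W''(1)/\e^2$ has strictly negative spectrum, so $1$ is hyperbolic and isolated) — forces the trajectory, once it enters a small neighborhood of $1$, to converge to $1$ and to have spent only finite "length"; tracing backward, the trajectory would then have to originate from the unstable manifold of $1$, which is trivial, contradicting that it converges to $u_\e^{-\infty} \neq 1$ backward in time. Equivalently and more simply: by monotonicity the $\omega$-limit set lies in the level $\{E_\e \leq \lim_{t\to+\infty} E_\e\}$ and consists of critical points; if this limit energy is $0$ the $\omega$-limit is $\{+1\}$ or $\{-1\}$, and the compactness of $[t_0, +\infty) \ni t \mapsto u_\e(\cdot,t)$ in $C^2$ (parabolic regularity plus uniform bounds $|u_\e| < 1$) together with convergence of the full trajectory to a single critical point (Łojasiewicz–Simon) gives $u_\e(\cdot,t) \to \pm 1$ in $C^2$; but then for large negative $t$ the trajectory enters the basin of attraction of $\pm 1$, which is forward-invariant, so it can never have had $u_\e^{-\infty}$ as a backward limit unless $u_\e^{-\infty} = \pm 1$, contradiction. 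Thus $\lim_{t \to +\infty} E_\e(u_\e(\cdot,t)) > 0$, and since this limit bounds $E_\e(u_\e(\cdot, t_k))$ from below for every sequence $t_k \uparrow +\infty$, the functions $u_\e(\cdot, t_k)$ cannot converge to $\pm 1$. I would also note that the hypothesis $\ind_\e(u_\e^{-\infty}) \geq 2$ — rather than merely $\geq 1$ — is what guarantees the family is genuinely infinite (a $(p-1)$-parameter family with $p \geq 2$) rather than a single orbit, which matters for the applications in Sections \ref{sec:main} and \ref{sec:sym}.
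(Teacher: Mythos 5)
The first two steps of your proposal (backward convergence via the Choi--Mantoulidis unstable-manifold construction, and strict monotonicity of the energy) are fine and agree with the paper. The genuine gap is in the third step, which is where all the content of the proposition lies. You try to show that \emph{no} trajectory emanating from the unstable manifold of $u_\e^{-\infty}$ can converge forward to $\pm 1$, and the mechanism you propose --- ``the trajectory enters the basin of attraction of $\pm 1$, which is forward-invariant, so it can never have had $u_\e^{-\infty}$ as a backward limit'' --- is false. Forward invariance of a basin says nothing about the past of a trajectory that enters the basin at some \emph{finite} time; a heteroclinic orbit from the saddle $u_\e^{-\infty}$ to the sink $+1$ does exactly this and yields no contradiction (likewise, such an orbit does not ``originate from the unstable manifold of $1$''). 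In fact such heteroclinics genuinely exist: the paper shows, using the positivity of the first eigenfunction $\varphi_1$, the parabolic maximum principle, and --- this is where the hypothesis $\Ric_g \geq 0$ actually enters, via Hiesmayr's rigidity result for ordered solutions (Proposition 6 in \cite{H}) --- that the trajectories $\mathscr{S}(\pm r,0,\ldots,0)$ \emph{do} converge to $\pm 1$ as $t \to +\infty$. So any argument purporting to show that every unstable trajectory avoids $\pm 1$ cannot succeed; your appeal to $\Ric_g \geq 0$ via an ``energy gap'' is also not connected to any actual estimate.

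What is needed instead is a topological argument on the sphere of unstable directions. The basins $U_\pm$ of $\pm 1$ are open in the relevant class of initial data (because $\pm 1$ are isolated local minimizers and the flow map is continuous), they are disjoint, and each meets $K=\{\mathscr{S}(a)(\cdot,0)\}_{a\in\partial B_r(0)}$ by the heteroclinics just described. Since $\ind_\e(u_\e^{-\infty})\geq 2$, the sphere $\partial B_r(0)\subset \R^I$ has dimension $\geq 1$, so any path in $K$ joining $w^+=\mathscr{S}(r,0,\ldots,0)(\cdot,0)$ to $w^-=\mathscr{S}(-r,0,\ldots,0)(\cdot,0)$ is connected and must contain a point $w=\mathscr{S}(a)(\cdot,0)$ lying in neither $U_+$ nor $U_-$; gluing $\mathscr{S}(a)$ on $(-\infty,0]$ to the forward flow of $w$ gives the desired eternal solution, and the abundance of such paths gives infinitely many. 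Note, finally, that you misidentify the role of the index hypothesis: $\ind_\e(u_\e^{-\infty})\geq 2$ is not merely what makes the family infinite, it is what makes $\partial B_r(0)$ connected so that the intermediate-value argument can run; for index $1$ the ``sphere'' consists of two points, both of which flow to $\pm 1$.
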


Observe that a time-dependent function $u$ is a (weak) solution to \eqref{PAC} if, and only if, is a (weak) solution to the negative $L^2$-gradient flow of $-\frac{1}{\e}E_\e$,  that is
	\begin{equation} \label{parabolic_Aflow}
		\partial_t u = -\frac{1}{\e}\nabla E_\e(u).
	\end{equation}
A possible parametrization of solutions of such gradient flows near critical points of the associated energy functional is developed in \cite[Theorem 3.3]{CM}. In order to describe their results, we will introduce some notation. 

Denote by $\mathcal{L}_\e$ the linearization of $-\frac{1}{\e}\nabla E_\e$:
	\[\mathcal{L}_\e(f) = -{\textstyle \frac{1}{\e}\frac{d}{ds}\big|}_{s=0}\nabla E_\e(sf).\]
One shows that this is related to the linearized Allen-Cahn operator at $u_\e^{-\infty}$ by:
	\[\mathcal{L}_\e(f) = \Delta f - \frac{W''(u_\e^{-\infty})}{\epsilon^2}f.\]
Let $\lambda^\e_1< \lambda^\e_2 \leq \ldots \leq \lambda^\e_I$ be the negative eigenvalues of $\mathcal{L}_\e$, where $I$ is the Morse index of $u^{-\infty}_\e$, and let $\varphi_1,\ldots,\varphi_I$ be corresponding $L^2$-orthonormal eigenfunctions with $\varphi_1>0$. 

In \cite{CM}, K. Choi and C. Mantoulidis proved that there exists $\eta=\eta(\e,M)>0$ and a $C^{2,\alpha}$-continuous family ${\{\mathscr{S}(a): M \times (-\infty,0]\to \R\}}$, for $a \in B_\eta(0) \subset \R^I$, of ancient solution to the parabolic equation \eqref{parabolic_Aflow} with controlled exponential decay (as $t \downarrow -\infty$). The solution $\mathscr{S}(a)$ is the unique $C^{1,\theta}$ solution of \eqref{parabolic_Aflow} with finite $L^1(M \times (-\infty,0])$ norm, modulo translation in time, that converges to $u_{\e}^{-\infty}$ backward in time, in the $C^{2,\theta}$ norm, and has $u_\e^{-\infty}+\sum_{j=1}^I a_j \varphi_j$ as its projection in the space generated by $\{\varphi_j\}_{j=1}^I$ at time $t=0$. Moreover, it satisfies
		\begin{equation} \label{flow_estimate}
			\left\|\mathscr{S}(a)(\cdot,0) - \left(u_\e^{-\infty}+\sum_{j=1}^I a_j \varphi_j\right)\right\|_{C^{2,\alpha}(M)} \leq C|a|^2,
		\end{equation}
for some $C>0$ (depending on $\e$) -- see Theorems 3.3 and 4.1 in \cite{CM} for a more precise and complete statement. By picking a sufficiently small $\eta>0$, we may assume that $\sup_{M}|\mathscr{S}(a)(\cdot,0)|<1$ for all $a \in B_\eta(0)$, and that 
	\[\mathscr{S}(r,0\ldots,0)(\cdot, 0)>u_\e^{-\infty} > \mathscr{S}(-r,0,\ldots,0)(\cdot,0) \ \text{for all} \ r \in (0,\eta).\]
This is possible because the first eigenfunction $\varphi_1$ is positive and by
	\[\left\|\frac{\mathscr{S}(a)(\cdot,0) - u_\e^{-\infty}}{r} - (\pm \varphi_1) \right\|_{C^{2,\alpha}(M)} \leq C r\]
for $a=(\pm r,0,\ldots,0)$, which follows from the estimate \eqref{flow_estimate}. We are now in position to prove Proposition \ref{prop_parabolicflow}.

\begin{proof}[Proof of Proposition \ref{prop_parabolicflow}]
Since the constant functions $\pm 1$ are isolated global minimizers of $E_{\epsilon}$, there exist disjoint neighborhoods $B_\pm$ of $\pm 1$ in $W^{1,2}(M)$ and $d_\e >0$ such that $\pm 1$ are the only solutions of \eqref{AC} in $B_{\pm}$, and
	\[E_\e(u) < E_\e(\pm 1) + d_\e = d_\e \quad \text{if, and only if}, \quad u \in B_{\pm}.\]

Let $\mathcal{S} = \{u \in C^2(M) \mid |u|\leq 1\}$. By Lemma 2.3 in \cite{GG} (and the continuous dependence of initial data, see e.g. Cazenave-Haraux \cite{CH}), there is a continuous map 
	\[\Phi:\mathcal{S} \times [0,\infty) \to W^{1,2}(M)\]
such that $\Phi(u,\cdot):M \times [0,\infty) \to \R$ is a solution of \eqref{PAC} defined for all $t \geq 0$ with $\Phi(u,0) = u$, and such that $\Phi(u,t) \in \mathcal{S}$, for all such $t$. Since $E_\e(\Phi(u,t))$ is decreasing with respect to $t$, for any $u \in \mathcal{S}$ and for any $T>0$,
	\[\text{if}\quad  \Phi(u,T) \in B_{\pm}, \quad \text{then} \quad \Phi(u,t) \in B_{\pm}, \ \text{for all} \ t\geq T.\]

We claim that the sets
	\begin{align*}
		U_{\pm} = \{u \in \mathcal{S} \mid \|\Phi(t,u)- (\pm 1)\|_{W^{1,2}(M)} \to 0, \ \text{as} \ t\to +\infty\}
	\end{align*}
are open. In fact, if $u \in U_{\pm}$, then there exists $T>0$ such that $\Phi(u,t) \in B_{\pm}$ for all $t \geq T$. By the continuity of $\Phi(\cdot,T)$, there exists $\delta>0$ such that $\Phi(w,T) \in B_{\pm}$ for all $w \in B_\delta(u)\cap \mathcal{S}$. This implies $\Phi(w,t) \in B_{\pm}$ for all $t \geq T$, so $\Phi(w,t)$ must converge to $\pm 1$ as $t \to +\infty$.

With the notation introduced above, let $r \in (0,\eta)$ and consider  $K=\{\mathscr{S}(a)(\cdot,0)\}_{a \in \partial B_r(0)}$. Write $w^\pm = \mathscr{S}(\pm r,0\ldots, 0)(\cdot, 0)$. By our choice of $\eta$, we have $w^+ > u_\e^{-\infty} > w^-$. As noted in \cite[Lemma 2.3]{GG}, there exist sequences $(t^\pm_k)$ such that $t_k^\pm \to \pm \infty$, and solutions $u^\pm$ of the Allen-Cahn equation such that $\Phi(w^\pm,t_k^\pm) \to u^\pm$. By the maximum principle for parabolic equations, these solutions satisfy $u^+ \geq u_\e^{-\infty} \geq u^-$, as well as $E_\e(w^\pm) < E_\e(u_\e^{-\infty})$. Since $u_\e^{-\infty}$ is nonconstant, by Proposition 6 in \cite{H}, we see that $u^\pm \equiv \pm  1$. Since $\pm 1$ are nondegenerate solutions, we get $\Phi(w^\pm,t) \to \pm 1$ as $t \to \pm \infty$ (see Remark 4.13 in \cite{CM}) and $w^\pm \in U_\pm$. Therefore, the sets $U_+ \cap K$ and $U_- \cap K$ are nonempty. 

By connectedness, it follows that along any path in $K$ that connects $w^{\pm}$, there exist $w=\mathscr{S}(a)(\cdot, 0) \in K$ also in this path, for some $a \in \partial B_r(0)$, such that $w \notin U_\pm$. This means that $\Phi(w,t)$ does not converge to $\pm 1$, as $t \to +\infty$. The desired solution of the parabolic equation \eqref{PAC} is then given by $u_\e(\cdot,t) = \mathscr{S}(a)(\cdot,t)$, for $t \leq 0$, and $u_\e(\cdot,t) = \Phi(w,t)$, for $t \geq 0$. Since $u_\e^{-\infty}$ has Morse index $\geq 2$, the set $K$ is a continuous injective image of a sphere of dimension $(\ind_\e(u_\e^{-\infty})-1)\geq 1$, hence there are infinitely many such paths, and this conclude the proof.
\end{proof}

In the next section, we will focus on the case where $u_\e^{-\infty}$ is the solution of \eqref{AC} in $S^3$ which vanishes precisely at the Clifford torus $T_c$, has this minimal surface as its limit interface. We hope to prove that $u_\e(\cdot,t)$ converges to a solution of the elliptic Allen-Cahn equation which vanishes precisely on an equatorial sphere. We will need to work around the fact that any (subsequential) limit of $u_\e(\cdot,t)$ as $t \to +\infty$ may have larger Morse index, so the regularity result from \cite{TW,Guaraco} does not readily apply. Without imposing any further conditions on the gradient flow of the functional, this phenomenon may happen even in the finite-dimensional setting, as we illustrate with an example below.

\subsection{Index change related to the gradient flow}

Let $F$ be a $C^1$ function on a manifold $M$. If $f: \R \to M$ is a complete solution to the negative gradient flow of $F$ on $M$ which joins a critical point $x$ to another critical point $y$, we know that $F(x) > F(y)$, but we may not have a relation between the Morse indexes of $x$ and $y$. In fact, even in finite dimensions, there are examples in which the Morse index increases along the flow, that is, such that $\mathrm{index}(x) < \mathrm{index}(y)$.

\begin{ex}
Let $M = S^1 \times S^2$ equipped with coordinates $(x,y,z,w,u)$ where $x^2 + y^2 = z^2 + w^2 + u^2 = 1$. Let $F(x,y,z,w,u) = y(u+2)$, then $F$ has $4$ critical points on $M$:
\begin{align*}
&(0,-1,0,0,\ \ 1),\ \text{with Morse index}\ 0, \\
&(0,\ \ 1,0,0,-1),\ \text{with Morse index}\ 1, \\
&(0,-1,0,0,-1),\ \text{with Morse index}\ 2, \\
&(0,\ \  1,0,0,\ \ 1), \ \text{with Morse index}\ 3.
\end{align*}

Then $f(t) = (0,\sin t, 0,0,-1), t \in [-\frac{\pi}{2}, \frac{\pi}{2}]$ is a gradient flow of $F$ from the index $1$ critical point to the index $2$ critical point.
\end{ex}

\section{Limit flows and interfaces} \label{sec:main}

Recall that if $u_{\e}$ denotes a solution to \eqref{PAC} on $S^3 \times I$, $I \subset \R$, we can define associated $2$-varifolds $V_{\epsilon,t}=V_{\e,u_\e(\cdot,t)}$ given by
    \[V_{\e,t}(\phi) = \frac{1}{2}\int_{S^3 \cap \{\nabla u_\e(\cdot,t) \neq 0\}} \phi(x,T_x\{u_{\e}=u_{\e}(x,t)\})\cdot \left( \frac{\e|\nabla u_{\e}(x,t)|^2}{2} + \frac{W(u_\e(x,t))}{\e} \right)\,d\mu_g(x)\]
for any continuous function $\phi$ on $G_{2}(S^3)$, and Radon measures $\mu_{\epsilon,t}=\mu_{\e,u_\e(\cdot,t)}$ on $S^3$ given by the weight measure $\mu_{\e,t}=\|V_{\e,t}\|$ of $V_{\e,t}$.

Throughout this section, for $\e \in (0,\e_2)$, we consider any solution $u_{\e}\colon S^3 \times \R \to \R$ which satisfies the conclusions of Proposition \ref{prop_parabolicflow}, where $u_\e^{-\infty}$ is a solution of \eqref{AC} whose nodal set is the Clifford torus, $\{u_\e^{-\infty}=0\} \cap S^3 = T_c$ (recall that this solution is unique, modulo sign).

\subsection{Asymptotic convergence of the gradient flow}
In what follows, we denote by $u_\e^{+\infty}$ an arbitrary subsequential limit of $u_\e(\cdot,t_k)$, for a sequence $t_k \to +\infty$. Note that this can be extracted from any such sequence, as consequence of the uniform energy bounds along the flow and the compactness properties of the Sobolev space $W^{1.2}$ \cite[Lemma 2.3]{GG}. Our main goal in this subsection is to prove that, for sufficiently small $\e>0$, this limit is a ground state and it is unique, using information about its limit interface. The main ingredient is the following rigidity result. \smallskip

\begin{lem} \label{equatorial}
There exists $\e_4 \in (0,\e_2)$ with the following property. For any $\e \in (0,\e_4)$, the solution $u_\e^{-\infty}$ has Morse index 5, and the only nonconstant solutions of \eqref{AC} with energy $<E_\e(u_\e^{-\infty})$ are ground states (in the sense defined in \cite{CGGM}).
\end{lem}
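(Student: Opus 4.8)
The plan is to rule out, for $\e$ small, any non-constant solution of \eqref{AC} below the energy of $u_\e^{-\infty}$ other than a ground state, by combining a compactness argument for the induced varifolds with the classification of low-area cyclic varifolds in $S^3$ and Hiesmayr's rigidity theorem. First recall the energy asymptotics: as $\e\downarrow0$ one has $E_\e(u_\e^{-\infty})\to 2\sigma\,\mathrm{Area}(T_c)=4\pi^2\sigma$ (from the construction in \cite{CGGM}), while the ground states have energy converging to $2\sigma\,\mathrm{Area}(S^2)=8\pi\sigma<4\pi^2\sigma$, where $S^2\subset S^3$ is a totally geodesic equator; so for $\e$ small the ground states do lie strictly below $E_\e(u_\e^{-\infty})$. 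The assertion $\ind_\e(u_\e^{-\infty})=5$ for small $\e$ is recorded in Section~\ref{rigidity} (following \cite{CG}, using \cite{CM,GIndex}), and it also follows from the argument below applied to $u_\e^{-\infty}$ itself. It thus remains to prove that, for $\e$ small, every non-constant solution of \eqref{AC} with energy $<E_\e(u_\e^{-\infty})$ is a ground state. Suppose not: then there are $\e_j\downarrow0$ and non-constant solutions $v_j$ of \eqref{AC} with $\e=\e_j$, $E_{\e_j}(v_j)<E_{\e_j}(u_{\e_j}^{-\infty})$, yet $v_j$ is not a ground state. By the maximum principle $|v_j|<1$, so $\sup_j E_{\e_j}(v_j)<\infty$.

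By the compactness theorem of Hutchinson--Tonegawa recalled in Section~\ref{ACminimal} (only the energy bound is needed for this part), after passing to a subsequence the induced varifolds $\tfrac1\sigma V_{\e_j,v_j}$ converge to a stationary integral $2$-varifold $\tfrac1\sigma V$ with $\tfrac1\sigma\|V\|(S^3)=\lim_j\tfrac1{2\sigma}E_{\e_j}(v_j)\le 2\pi^2$. The limit is non-trivial, since non-constant solutions of \eqref{AC} on a closed manifold have energy bounded below away from $0$ for small $\e$; and it is cyclic mod $2$, i.e. $\partial[\tfrac1\sigma V]=0$, because each $V_{\e_j,v_j}$ agrees (up to the constant $\sigma$ and a factor $2$) with the varifold boundary of $\{v_j>0\}$, and this property passes to the varifold limit (cf.\ \cite{W09}). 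Hence, by \cite[Lemma 5.8]{CM}, $\Sigma:=\supp\|\tfrac1\sigma V\|$ is either a multiplicity-one equator or a Clifford torus; in particular $\Sigma$ is a smooth embedded minimal surface and $\tfrac1\sigma V$ has multiplicity one.

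Because $\Sigma$ is a single smooth embedded minimal surface of multiplicity one, the convergence $v_j\to\Sigma$ is smooth: for $j$ large $\{v_j=0\}$ is a normal graph over $\Sigma$ converging smoothly to $\Sigma$, and $v_j$ converges, on a fixed tubular neighbourhood, to the transplanted one-dimensional heteroclinic profile (this is the regular regime of the convergence theory of \cite{HT,TW}, see also \cite{ChodoshMantoulidis,H}, once the limit interface is known to be smooth, embedded, and of multiplicity one). Consequently the small eigenvalues of $\mathcal L_{\e_j,v_j}$ converge to those of the Jacobi operator of $\Sigma$, so $\ind_{\e_j}(v_j)\ge\ind(\Sigma)$ and a priori $\ind_{\e_j}(v_j)\le\ind(\Sigma)+\mathrm{nul}(\Sigma)$ for $j$ large. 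The key point is that for $\Sigma$ an equator ($\mathrm{nul}=3$) or a Clifford torus ($\mathrm{nul}=4$) every Jacobi field is generated by a one-parameter group of isometries of $S^3$; since $E_\e$ is isometry-invariant, for any Killing field $X$ on $S^3$ the function $\langle\nabla v_j,X\rangle$ lies in $\ker\mathcal L_{\e_j,v_j}$, and since $\{v_j=0\}$ is $C^1$-close to $\Sigma$ its stabilizer in $\mathrm{Isom}(S^3)$ has dimension at most $\dim\mathrm{Stab}(\Sigma)=6-\mathrm{nul}(\Sigma)$, so these functions span a subspace of $\ker\mathcal L_{\e_j,v_j}$ of dimension $\ge\mathrm{nul}(\Sigma)$. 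Comparing with the $(\ind(\Sigma)+\mathrm{nul}(\Sigma))$-dimensional span of the near-zero eigenfunctions forces $\ind_{\e_j}(v_j)\le\ind(\Sigma)\le 5$ for $j$ large. Hiesmayr's theorem then applies (with $C=1+\sup_j E_{\e_j}(v_j)$): $v_j$ is a ground state or a symmetric solution whose nodal set is some Clifford torus. The first case contradicts our choice of $v_j$; in the second, picking an isometry $\Psi_j$ of $S^3$ with $\Psi_j(T_c)=\{v_j=0\}$, the solution $v_j\circ\Psi_j$ of \eqref{AC} has nodal set $T_c$, hence equals $\pm u_{\e_j}^{-\infty}$ by the uniqueness recalled in Section~\ref{rigidity}, so $E_{\e_j}(v_j)=E_{\e_j}(u_{\e_j}^{-\infty})$, contradicting $E_{\e_j}(v_j)<E_{\e_j}(u_{\e_j}^{-\infty})$. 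This contradiction proves the lemma, with $\e_4\in(0,\e_2)$ taken below $\e_3(C)$ and below the thresholds at which all the above estimates become valid.

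The main obstacle is the Morse index bound $\ind_{\e_j}(v_j)\le 5$: a non-constant solution with energy below $4\pi^2\sigma$ could a priori have arbitrarily large index, and the naive semicontinuity estimate only gives $\ind_{\e_j}(v_j)\le\ind(\Sigma)+\mathrm{nul}(\Sigma)$, which equals $9$ when $\Sigma$ is a Clifford torus. The improvement to $\ind(\Sigma)\le 5$ exploits the special feature of $S^3$ that the null directions of both the equator and the Clifford torus are spanned by ambient Killing fields, so they persist as genuine zero modes of the Allen--Cahn linearization along any sequence converging smoothly to $\Sigma$ (and that the near-nodal symmetry of $v_j$ cannot exceed that of $\Sigma$). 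The remaining technical point is to extract from \cite{HT,TW} (and \cite{ChodoshMantoulidis,H}) the smooth convergence $v_j\to\Sigma$, and hence the convergence of the low-lying spectrum of $\mathcal L_{\e_j,v_j}$, in the regime where the limit interface is a single smooth embedded surface of multiplicity one.
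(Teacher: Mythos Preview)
Your argument follows the paper's strategy: argue by contradiction, pass to a limit stationary integral varifold with mass $\le 2\pi^2$, show it is cyclic mod~$2$, classify it via \cite[Lemma~5.8]{CM} as a multiplicity-one equator or Clifford torus, deduce a Morse index bound $\le 5$, apply Hiesmayr's rigidity theorem, and contradict either the non-ground-state assumption or the strict energy inequality. Two points deserve comment.

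First, your justification of $\partial[\tfrac1\sigma V]=0$ is misstated: the associated varifold $V_{\e_j,v_j}$ at finite $\e_j$ is \emph{not} the boundary of $\{v_j>0\}$ (it is a diffuse object built from all level sets weighted by the energy density); the cyclic property holds only for the \emph{limit}, via the density-parity result of \cite{HT}, which gives $[\tfrac1\sigma V]=[\partial^*\{u_0=1\}]$ as $\mathbb{Z}_2$-chains. The paper takes a slightly different route here: it first proves density $<2$ everywhere via the monotonicity estimate \cite[Lemma~A.2]{MN}, hence multiplicity one, and then identifies $\tfrac1\sigma V$ with the reduced boundary of $\{u_0=1\}$ directly before invoking \cite[Lemma~5.8]{CM}.

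Second, for the index bound, the paper simply cites the index estimates of \cite{GIndex,HIndex,ChodoshMantoulidis} under multiplicity-one convergence to conclude $\ind_{\e_j}(v_j)\in\{1,5\}$ for large $j$. Your argument via smooth convergence of the nodal sets, spectral convergence of $\mathcal L_{\e_j,v_j}$ to the Jacobi operator, and the observation that all Jacobi fields of an equator or Clifford torus arise from ambient Killing fields (hence persist as exact zero modes of $\mathcal L_{\e_j,v_j}$, forcing the $\mathrm{nul}(\Sigma)$ near-zero eigenvalues to be non-negative) is a correct and self-contained unpacking of precisely those cited results; it is more work than the paper's one-line citation but makes explicit why the na\"ive bound $\ind(\Sigma)+\mathrm{nul}(\Sigma)$ improves to $\ind(\Sigma)$.
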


\begin{proof}
The existence of $\e_4$ satisfying the first property follows from the index bounds of \cite{GIndex,HIndex} and \cite{ChodoshMantoulidis} under the multiplicity one hypothesis (see Section 4.3 in \cite{CG}), and the fact that the Clifford torus has Morse index $5$.

Suppose that there does not exist such $\e_4>0$. Then there exist a sequence $\e_j>0$ such that $\e_j \downarrow 0$, and a sequence $u_j$ of nonconstant solutions to \eqref{AC} with $\e=\e_j$ which are not ground states and have energy $<E_{\e_j}(u_{\e_j}^{-\infty})$ . Since $u_j$ is unstable, if $a_j$ is the energy of a ground state, then $E_{\e_j}(u_j) > a_j$.

Since these solutions have uniformly bounded energy, by passing to a subsequence, we may assume that the varifolds $\frac{1}{\sigma}V_{\e_j,u_j}$ converge to a stationary integral varifold $\frac{1}{\sigma}V$ in $S^3$ with area in $[4\pi,2\pi^2]$. We claim that $\frac{1}{\sigma}V$ has density $\mathcal{H}^2$-a.e. equal to $1$ on its support. In fact, $\Theta(\frac{1}{\sigma}V,x) \in \mathbb{Z}_{+}$ for $\mathcal{H}^2$-almost every such $x$. From $\frac{1}{\sigma}\|V\|(S^3)\leq 2\pi^2$ and the density estimate in \cite[Lemma A.2]{MN}, we see that the density of $\frac{1}{\sigma}V$ is everywhere strictly less than $2$, proving the claim.

By the remarks about energy loss in \cite{HT}, we see that $\frac{1}{\sigma}V$ is the boundary of a region. More precisely, the varifold $\frac{1}{\sigma}V$ agrees with the multiplicity one varifold induced by the reduced boundary of $\{u=1\}$, where $u$ is the function of bounded variation on $S^3$ given by the a.e. limit of $u_j$. Consequently, the boundary of the $\mathbb{Z}_2$ chain associated to $\frac{1}{\sigma}V$ (in the sense of White \cite{W09}) vanishes. By the aforementioned result from Choi-Mantoulidis \cite{CM}, it follows that $\frac{1}{\sigma}V$ is either a multiplicity one equatorial sphere or a Clifford torus.

To conclude the proof, we use the rigidity result of Hiesmayr and index bounds for the limit interface $\supp\|V\|$ to reach a contradiction. These index bounds imply that $u_j$ has Morse index $1$ or $5$ for sufficiently large $j$. Then $u_j$ is either a ground state solution or equal to $u_{\e_j}^{-\infty}$, up to isometries. This contradicts the area bounds $a_j < E_{\e_j}(u_j)< E_{\e_j}(u_{\e_j}^{-\infty})$.
\end{proof}

We apply this result to the subsequential limit $u_\e^{+\infty}$, assuming $\e \in (0,\e_4)$. Since the energy of $E_\e(u_\e(\cdot,t))$ is strictly decreasing, we have
    \[\sup_\e E_\e(u_\e^{+\infty}) < \sup_\e E_\e(u_\e^{-\infty}) <\infty,\]
so $u_\e^{+\infty}$ is a ground state solution, by Lemma \ref{equatorial}. In particular, if $V_{\epsilon}^{+\infty}$ is the varifold associated to $u^{+\infty}_{\epsilon}$, then $V^{+\infty}_{\epsilon} \to V^{+\infty}$ subsequentially, in the varifold sense, where $\frac{1}{\sigma}V^{+\infty}$ is a multiplicity one equatorial sphere.

We are ready to prove the full convergence of $u_\e(\cdot,t)$ to $u_\e^{+\infty}$ using the previous Lemma and the Łojasiewicz-Simon inequality. This is stated, more precisely in the following proposition, which summarizes the results obtained in this subsection.

\begin{prop} \label{prop2}
Let $\e_4>0$ be given by Lemma \ref{equatorial}, and consider the critical points $u_\e^{-\infty}$ of $E_\e$ described above. Given $\e \in (0,\e_4)$, suppose that $u_{\epsilon}:S^3\times \R \to \R$ is a sequence of eternal solutions to \eqref{PAC} on $S^3$ such that
\begin{enumerate}
    \item[(i)] $E_{\e}(u_\e(\cdot,t))$ is strictly decreasing;
    \item[(ii)] $\|u_\e(\ \cdot \ , t) - u_\e^{-\infty}\|_{W^{1,2}(M)} \to 0$ as $t \to -\infty$;
    \item[(iii)] for any sequence $t_k \to +\infty$, the functions $u_\e(\cdot,t_k)$ do not converge to the constant critical points $\pm 1$ of $E_\e$.
\end{enumerate}
Then, there exists a (nonconstant) least energy unstable solution $u^{+\infty}_{\e}$ of the Allen-Cahn equation \eqref{AC} such that 
	\[\|u_{\e}(\ \cdot \ , t) - u_{\e}^{\pm\infty}\|_{W^{1,2}(S^3)} \to 0, \quad \text{as} \quad t \to \pm\infty.\]
In particular, any limit interface obtained from the limits $u_\e^{+\infty}$ is a multiplicity one equatorial sphere, and it holds $E_{\e}(u_{\e}^{+\infty}) \to (2\sigma)\cdot 4\pi$.
\end{prop}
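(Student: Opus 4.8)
The plan is to prove the forward convergence in two stages: first, that every subsequential limit of $u_{\e}(\cdot,t)$ as $t\to+\infty$ is a ground state solution of \eqref{AC}; second, that the flow actually converges (not merely along subsequences) to such a solution in $W^{1,2}(S^3)$, via the \L ojasiewicz--Simon gradient inequality. The backward convergence is hypothesis (ii). For the first stage I would start from the gradient flow identity $\tfrac{d}{dt}E_{\e}(u_{\e}(\cdot,t))=-\e\|\partial_t u_{\e}(\cdot,t)\|_{L^2(S^3)}^2$ coming from \eqref{parabolic_Aflow}: by (i), $E_{\e}(u_{\e}(\cdot,t))$ decreases strictly to some $E_{\infty}\ge0$ and $\int_0^{\infty}\e\|\partial_t u_{\e}\|_{L^2}^2\,dt<\infty$; by (ii) and continuity of $E_{\e}$ on $W^{1,2}(S^3)$, $E_{\infty}<E_{\e}(u_{\e}^{-\infty})$. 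Since $|u_{\e}|\le1$, parabolic Schauder estimates bound $u_{\e}(\cdot,t)$ uniformly in $C^{2,\alpha}(S^3)$ for $t\ge1$, so $\{u_{\e}(\cdot,t)\}_{t\ge1}$ is precompact in $C^2(S^3)$. If $t_k\to+\infty$ and $u_{\e}(\cdot,t_k)\to v$ in $C^2$, then the time-translated flows $w_k(\cdot,s):=u_{\e}(\cdot,t_k+s)$ subconverge in $C^2_{\mathrm{loc}}$ to a solution $w_{\infty}$ of \eqref{PAC} with $w_{\infty}(\cdot,0)=v$ and $E_{\e}(w_{\infty}(\cdot,s))\equiv E_{\infty}$; constancy of the energy forces $\partial_s w_{\infty}\equiv0$, so $v$ solves \eqref{AC}. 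The solution $v$ is nonconstant: $v\ne\pm1$ by (iii), and $v\ne0$ because $E_{\e}(v)=E_{\infty}<E_{\e}(u_{\e}^{-\infty})\le E_{\e}(0)$, the last inequality holding since the constant $0$ competes in the half-domain minimization problem defining $u_{\e}^{-\infty}$ (Section \ref{rigidity}). As $\e\in(0,\e_4)$ and $E_{\e}(v)<E_{\e}(u_{\e}^{-\infty})$, Lemma \ref{equatorial} shows that $v$ is a ground state.

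For the second stage, fix one such subsequential limit $v$ and apply the \L ojasiewicz--Simon inequality for the analytic functional $E_{\e}$ at the critical point $v$ (cf.\ the discussion of \cite{CM} in Section \ref{rigidity}): there are $\theta\in(0,\tfrac12]$, $C>0$ and a $W^{1,2}$-neighborhood $\mathcal{U}$ of $v$ with $|E_{\e}(w)-E_{\e}(v)|^{1-\theta}\le C\|\nabla E_{\e}(w)\|_{L^2}$ for $w\in\mathcal{U}$. Since $E_{\e}(u_{\e}(\cdot,t))\downarrow E_{\infty}=E_{\e}(v)$ and $u_{\e}(\cdot,t_k)\to v$, for $k$ large $u_{\e}(\cdot,t_k)$ lies in $\mathcal{U}$ with both $\|u_{\e}(\cdot,t_k)-v\|_{W^{1,2}}$ and $E_{\e}(u_{\e}(\cdot,t_k))-E_{\e}(v)$ as small as we wish. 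A standard continuity argument then keeps the flow in $\mathcal{U}$ for all $t\ge t_k$ and, using $\|\nabla E_{\e}(u_{\e})\|_{L^2}=\e\|\partial_t u_{\e}\|_{L^2}$, gives
\[-\tfrac{d}{dt}\bigl(E_{\e}(u_{\e}(\cdot,t))-E_{\e}(v)\bigr)^{\theta}\ \ge\ \tfrac{\theta}{C}\,\|\partial_t u_{\e}(\cdot,t)\|_{L^2(S^3)}.\]
Integrating shows $\partial_t u_{\e}\in L^1\bigl([t_k,\infty);L^2(S^3)\bigr)$, so $u_{\e}(\cdot,t)$ converges in $L^2(S^3)$ — hence, by the uniform $C^{2,\alpha}$ bounds, in $C^2(S^3)$ — to a solution of \eqref{AC}, which must equal $v$. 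Therefore $u_{\e}(\cdot,t)\to v=:u_{\e}^{+\infty}$ in $W^{1,2}(S^3)$ as $t\to+\infty$, and $u_{\e}^{+\infty}$ is a least energy unstable solution; together with (ii) this is the asserted convergence $u_{\e}(\cdot,t)\to u_{\e}^{\pm\infty}$. I expect this trapping step to be the main obstacle, the subtlety being that ground states are not isolated critical points, so one must work with the local \L ojasiewicz inequality at the fixed point $v$ rather than invoke nondegeneracy.

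Finally, for the limit interface: since $u_{\e}^{+\infty}$ is a ground state, for small $\e$ it has Morse index $1$ and nodal set an equatorial sphere, by \cite{CGGM} and the index estimates recalled in Section \ref{rigidity}. Given $\e_j\downarrow0$, pass to a subsequence so that the associated varifolds $\tfrac1\sigma V_{\e_j}^{+\infty}$ converge to a stationary integral varifold $\tfrac1\sigma V^{+\infty}$ in $S^3$ with $\tfrac1\sigma\|V^{+\infty}\|(S^3)\in[4\pi,2\pi^2]$ (as in the proof of Lemma \ref{equatorial}, using $E_{\e_j}(u_{\e_j}^{+\infty})<E_{\e_j}(u_{\e_j}^{-\infty})$ and $\tfrac{1}{2\sigma}E_{\e_j}(u_{\e_j}^{-\infty})\to2\pi^2$). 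By the cyclic mod $2$ structure, the density estimate of \cite[Lemma A.2]{MN}, and the classification of low-area stationary varifolds of \cite{CM}, $\tfrac1\sigma V^{+\infty}$ is a multiplicity one equatorial sphere or a Clifford torus; the latter is ruled out because solutions converging to a multiplicity one Clifford torus must have Morse index $\ge5$ \cite{GIndex,HIndex}, whereas $\ind_{\e_j}(u_{\e_j}^{+\infty})=1$. Hence $\tfrac1\sigma V^{+\infty}$ is a multiplicity one equatorial sphere, and the no-loss-of-mass identity in the convergence theorem of Section \ref{ACminimal} yields $\tfrac{1}{2\sigma}E_{\e_j}(u_{\e_j}^{+\infty})=\tfrac1\sigma\|V_{\e_j}^{+\infty}\|(S^3)\to\tfrac1\sigma\|V^{+\infty}\|(S^3)=4\pi$; since every subsequence of $\e\downarrow0$ admits such a sub-subsequence, $E_{\e}(u_{\e}^{+\infty})\to(2\sigma)\cdot4\pi$, as claimed.
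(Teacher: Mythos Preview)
Your proof follows essentially the same strategy as the paper: identify subsequential forward limits as ground states via Lemma~\ref{equatorial}, then upgrade to full convergence via a \L ojasiewicz--Simon inequality. There is, however, one point where your argument diverges and is slightly less general. You justify the \L ojasiewicz--Simon inequality by calling $E_{\e}$ an analytic functional; but the paper only assumes $W\in C^{\infty}$, not real-analytic, so Simon's analytic version does not apply in that generality. The paper instead observes that least energy unstable critical points are unique up to ambient isometries of $S^3$ (by \cite{CGGM}), so the set of ground states is a smooth compact manifold on which $E_{\e}$ is constant --- i.e.\ $E_{\e}$ is Morse--Bott at this critical level --- and then invokes the \L ojasiewicz--Simon inequality for Morse--Bott functionals from \cite{FM}. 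This yields exactly the trapping and $L^1$-in-time integrability of $\partial_t u_{\e}$ that you derive, but without any analyticity hypothesis on $W$; your remark that ``ground states are not isolated critical points'' is precisely the reason the paper takes the Morse--Bott route rather than the analytic one.

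Your final paragraph on the limit interface is correct but more elaborate than needed: once $u_{\e}^{+\infty}$ is known to be a ground state, its nodal set is an equatorial sphere and the energy asymptotics $E_{\e}(u_{\e}^{+\infty})\to 2\sigma\cdot 4\pi$ follow directly from the uniqueness (up to isometry) and known energy limit of ground states in \cite{CGGM}, without re-running the low-area stationary varifold classification.
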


\begin{proof} It remains to prove the full convergence of $u_\e(\cdot,t)$, as $t \to +\infty$. By \cite{CGGM}, least energy unstable critical points of the energy functional $E_\e$ are unique up to ambient isometries. In particular, $E_\e$ is a \emph{Morse-Bott} functional at this critical level. Thus, the convergence of $u_\e(\cdot, t)$ to $u_\e^{+\infty}$ in $W^{1,2}$ is a consequence of the Łojasiewicz-Simon gradient inequality for such functionals, see e.g. \cite{FM}.
\end{proof}

\subsection{The limit flow} Now we analyze the limit of the gradient flow given by Proposition \ref{prop2} as $\e\downarrow 0$. For simplicity, we will omit the index $j$ in sequences $\e_j\downarrow 0$ and in the corresponding objects (the energy functional, functions, and varifolds). First, we aim to prove that the gradient flow satisfies the necessary conditions to take the limit as $\e\downarrow 0$ and obtain a codimension one Brakke flow in the sphere $S^3$.

We showed that $E_{\epsilon} (u_{\epsilon}^{-\infty}) \to 2\sigma (2\pi^2)$, and $E_{\epsilon}(u_\epsilon^{+\infty}) \to 2\sigma (4\pi)$. Thus given a small $\delta > 0$, for sufficiently small $\epsilon > 0$ (depending on $\delta$), we have 
    \[2\sigma (2\pi^2) - \delta \leq E_{\epsilon} (u_{\epsilon}^{-\infty}) \leq 2\sigma (2\pi^2) + \delta \quad \text{and} \quad E_{\epsilon} (u_{\epsilon}^{+\infty}) \leq 2\sigma (4\pi) + \delta.\]
Recall that the energy $E_{\epsilon}(u_{\epsilon}(\cdot,t))$ is a continuous strictly decreasing function of $t$. By picking a sufficiently small $\delta>0$ and by noting that this solution joins $u_{\epsilon}^{-\infty}$ to $u_{\epsilon}^{+\infty}$, we see that there exists $t(\epsilon) \in \R$ such that $E_{\epsilon}\left(\, u_{\epsilon}(\cdot,t(\epsilon))\,\right) = 2\sigma (5\pi)$ (as $4\pi < 5\pi < 2\pi^2$). By translating the gradient flow $u_\epsilon(\cdot,t)$ to $u_{\epsilon}(\ \cdot \ , t+t(\epsilon))$, we can assume that $E_{\epsilon}\left(\,u_{\epsilon}(\cdot,0)\,\right) = 2\sigma (5\pi)$ for all small $\epsilon$.

\begin{rmk}
In this section, we will abuse notation and also denote by $u_{\e_j}$ (or simply $u_\e$) the time-translated solution. When considering a \emph{family} of solutions to \eqref{PAC} -- in particular the family $\mathscr{S}$ constructed in \cite{CM} and described in Section \ref{sec:gradient} -- it will be useful to keep track of this translation to contrast it with the initial condition $u_\e(\cdot,0)$.
\end{rmk}

\begin{lem} \label{bounds}
There exists $\e_5 \in (0,\e_4)$ with the following property. Let $u_{\epsilon}\colon M \times \mathbb{R} \to \mathbb{R}$ be a solution of \eqref{PAC} that satisfies the hypotheses of Proposition \ref{prop2}, for $\e \in (0,\e_5)$. Then $u_{\e} \in C^{3}(S^3\times \R)$ and $\sup_{S^3 \times \R} |u_{\e}|< 1$. Moreover, 
\begin{enumerate}
    \item For any $t \in \R$,  it holds
        \[E_{\e}\left(\, u_{\e}(\ \cdot \ ,t)\,\right) \leq 2\sigma(2\pi^2) +1\]
    \item It holds
        \[\int_{S^3\times \R}\e|\partial_t u_{\e}|^2 \leq 2\sigma(2\pi^2)+1.\]
\end{enumerate}
\end{lem}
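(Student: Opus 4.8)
The plan is to establish Lemma \ref{bounds} by combining regularity theory for the parabolic Allen-Cahn equation with the energy identities that are already available from Proposition \ref{prop2}. First I would address the qualitative statements. The bound $\sup_{S^3\times\R}|u_\e|<1$ is a consequence of the maximum principle: since $W'(\pm1)=0$ and $tW'(t)<0$ for $|t|\in(0,1)$, the constants $\pm1$ are sub/supersolutions, and the parabolic strong maximum principle upgrades $|u_\e|\leq 1$ (which holds because $u_\e(\cdot,t)\in\mathcal{S}$ along the flow, as recorded in the proof of Proposition \ref{prop_parabolicflow}) to the strict inequality unless $u_\e\equiv\pm1$, which is excluded by hypothesis (iii). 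Once $|u_\e|<1$ uniformly, parabolic Schauder estimates applied to \eqref{PAC} (whose nonlinearity $W'$ is smooth and whose arguments stay in a fixed compact subset of $(-1,1)$) give interior $C^{3}$ regularity in space-time; since the flow is eternal there is no boundary in time, so $u_\e\in C^3(S^3\times\R)$. I would cite Cazenave-Haraux \cite{CH} or standard parabolic regularity here rather than reprove it.

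For part (1), the key point is monotonicity: $E_\e(u_\e(\cdot,t))$ is strictly decreasing in $t$ by hypothesis (i), so $E_\e(u_\e(\cdot,t))\leq \lim_{t\to-\infty}E_\e(u_\e(\cdot,t))=E_\e(u_\e^{-\infty})$, using the $W^{1,2}$-convergence in hypothesis (ii) together with continuity of $E_\e$ on $W^{1,2}(S^3)$. By Proposition \ref{prop2} (or the remark preceding Lemma \ref{bounds}), $E_\e(u_\e^{-\infty})\to 2\sigma(2\pi^2)$ as $\e\downarrow0$, so there is $\e_5\in(0,\e_4)$ such that $E_\e(u_\e^{-\infty})\leq 2\sigma(2\pi^2)+1$ for all $\e\in(0,\e_5)$; this gives the stated uniform-in-$t$ bound. (One should also possibly shrink $\e_5$ below $\e_4$ so the conclusions of Lemma \ref{equatorial} and Proposition \ref{prop2} are in force, but that is automatic from $\e_5<\e_4$.)

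For part (2), I would use the standard energy dissipation identity for the gradient flow \eqref{parabolic_Aflow}: for $-T_1<T_2$,
\[
\frac{1}{\e}\big(E_\e(u_\e(\cdot,-T_1))-E_\e(u_\e(\cdot,T_2))\big)=\int_{S^3\times(-T_1,T_2)}\frac{1}{\e}|\nabla E_\e(u_\e)|^2\,d\mu_g\,dt=\int_{S^3\times(-T_1,T_2)}\e|\partial_t u_\e|^2\,d\mu_g\,dt,
\]
which holds because $\partial_t u_\e=-\frac{1}{\e}\nabla E_\e(u_\e)$ and $u_\e\in C^3$. Letting $T_1,T_2\to\infty$ and using that $E_\e(u_\e(\cdot,t))$ decreases from $E_\e(u_\e^{-\infty})$ to $E_\e(u_\e^{+\infty})\geq 0$, the right-hand side is bounded by $\frac{1}{\e}E_\e(u_\e^{-\infty})$... and here is the main obstacle I anticipate: the dissipation identity as written produces a factor $\frac{1}{\e}$, i.e. $\int\e|\partial_t u_\e|^2\leq \frac{1}{\e}E_\e(u_\e^{-\infty})$, which is \emph{not} uniformly bounded as $\e\downarrow0$. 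Resolving this requires being careful about which gradient flow convention is in force: equation \eqref{PAC} is $\e\partial_t u=\e\Delta u-\frac1\e W'(u)$, i.e. $\partial_t u=-\frac{1}{\e^2}\nabla_{L^2}E_\e$ in the naive normalization but, written as in \eqref{parabolic_Aflow}, the relevant statement is $\frac{d}{dt}E_\e(u_\e(\cdot,t))=-\int_{S^3}\e|\partial_t u_\e|^2\,d\mu_g$ directly — multiplying \eqref{PAC} by $\partial_t u_\e$ and integrating gives $\int_{S^3}\e|\partial_t u_\e|^2=-\int_{S^3}(\e\Delta u_\e-\frac1\e W'(u_\e))\partial_t u_\e = -\frac{d}{dt}E_\e(u_\e)$. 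Integrating in $t\in\R$ then yields exactly $\int_{S^3\times\R}\e|\partial_t u_\e|^2=E_\e(u_\e^{-\infty})-\lim_{t\to+\infty}E_\e(u_\e(\cdot,t))\leq E_\e(u_\e^{-\infty})\leq 2\sigma(2\pi^2)+1$, which is the desired bound with the same $\e_5$ as in part (1). So the crux is to run the integration against $\partial_t u_\e$ with the correct (scaled) energy identity coming from \eqref{PAC} itself, not from an unnormalized $L^2$ flow; once that bookkeeping is done the estimate is immediate and the constant is the same as in (1).
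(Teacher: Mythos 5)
Your proposal is correct and follows essentially the same route as the paper: the maximum principle for $|u_\e|<1$, monotonicity of $E_\e$ along the flow plus $E_\e(u_\e^{-\infty})\to 2\sigma(2\pi^2)$ for part (1), and the dissipation identity $\frac{d}{dt}E_\e(u_\e(\cdot,t))=-\int_{S^3}\e|\partial_t u_\e|^2$ (obtained by testing \eqref{PAC} against $\partial_t u_\e$) integrated over $\R$ for part (2). The normalization worry you raise mid-argument is a non-issue that you correctly resolve — the paper computes exactly the identity you settle on in your final sentences.
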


\begin{proof}

Once again, we will omit the index $j$. Since $u_{\epsilon}^{-\infty}$ and $u_{\epsilon}^{+\infty}$ are solutions of \eqref{AC}, we have that $|u_{\epsilon}^{-\infty}| \leq 1$ and $|u_{\epsilon}^{+\infty}| \leq 1$. Since $\{u_{\epsilon}\}$ solves the parabolic Allen-Cahn equation \eqref{PAC}, by a maximum principle argument (i.e. Lemma 2.3 (2) in Gaspar-Guaraco \cite{GG}), we know that $|u_{\epsilon}| < 1$.

As noted above, for sufficiently small $\e>0$ (depending only on $E_{\e}(u_\e^{+\infty})$), the energies of $u_{\epsilon}(\cdot,t)$ are bounded above by $2\sigma (2\pi^2) + 1$, which we will denote by $E_0$. By the monotonicity of the energy, this proves (1). Furthermore, $E_\e(u_{\e})$ is differentiable with respect to $t$ and

\begin{align*}
\frac{d}{dt} E_{\epsilon}(u_{\epsilon}) &=\int_{S^3} \epsilon \left\langle\nabla u_{\epsilon}, \nabla (\partial_t u_{\epsilon})\right\rangle + \frac{1}{\epsilon} W'(u_{\epsilon}) \partial_t u_{\epsilon} \\ 
&= \int_{S^3} -\epsilon (\partial_t u_{\epsilon})(\Delta u_{\epsilon} - \frac{1}{\epsilon^2} W'(u_{\epsilon})) \\
&= -\int_{S^3} \epsilon |\partial_t u_{\epsilon}|^2.
\end{align*}

Thus, for all $T_1<T_2$, 
\begin{align*}
\int_{S^3 \times (T_1,T_2)} \epsilon |\partial_t u_{\epsilon}|^2 = -\int_{T_1}^{T_2} \frac{d}{dt} E_{\epsilon}(u_{\epsilon}) dt = E_\e(u_\e(\cdot,T_1)) - E_\e(u_\e(\cdot,T_2)) \leq E_\e(u_\e^{-\infty})< E_0.
\end{align*}
This proves (2).
\end{proof}

By the convergence result for solutions of \eqref{PAC} of Ilmanen \cite{Ilmanen} and Tonegawa \cite{T03} (see also Sato \cite{Sato}), after passing to a subsequence (not relabeled) with $\e\downarrow 0$, the varifolds $V_{\e,t}$ associated to $u_{\epsilon}(\cdot,t)$ converge, for every $t \in \R$, to a $2$-varifold $V_t$, and the underlying Radon measures $\frac{1}{\sigma}\mu_{\e,t}=\frac{1}{\sigma}\|V_{\e,t}\|$ converge to a Radon measure       \begin{equation}\Sigma_t:=\frac{1}{\sigma}\mu_t=\frac{1}{\sigma}\|V_t\| \end{equation}
which satisfies the mean curvature flow equation in the sense of Brakke. Moreover,
     \[\frac{1}{2\sigma} E_{\epsilon}\left(\,u_{\epsilon}(\ \cdot \ ,t)\,\right) \to \|\Sigma_t\| (S^3), \quad \text{as} \quad \epsilon \downarrow 0,\]
and, for almost every $t \in \R$, the varifold $\frac{1}{\sigma}V_t$ is an integral varifold.\smallskip
   
More precisely, we apply the convergence result to $u_\e$ on $S^3 \times [-m,+\infty)$ for each $m \in \mathbb{N}$ to obtain the (subsequential) convergence of $\frac{1}{\sigma}V_{\e,t}$ for all $t \geq -m$. By picking a diagonal subsequence, we get the convergence described above.\medskip

Since $\frac{1}{2\sigma} E_{\epsilon}\left(\,u_{\epsilon}(\ \cdot \ ,0)\,\right) = 5\pi$ for all small $\epsilon$, we see that 
    \[\|\Sigma_{-t}\|(S^3) = \lim_{\e \downarrow 0} \frac{1}{2\sigma}E_{\e}(u_\e(\cdot,-t)) \geq \lim_{\e \downarrow 0} \frac{1}{2\sigma}E_{\e}(u_\e(\cdot,0)) = 5\pi\]
and, similarly, $\|\Sigma_t\|(S^3) \leq 5\pi$, for every $t \geq 0$. Note also that $\|\Sigma_t\|(S^3) \leq 2\pi^2$ for all $t \in \R$. In fact, since $\frac{1}{2\sigma}E_{\epsilon} (u^{-\infty}_{\epsilon}) \to 2\pi^2$ as $\epsilon \downarrow 0$, for each $\delta' > 0$, there exists a $\epsilon' > 0$ small, such that 
    \[\frac{1}{2\sigma}E_{\epsilon} (u^{-\infty}_{\epsilon}) < 2\pi^2 + \delta', \quad \text{for all} \  \epsilon \in (0,\epsilon').\]
By noting that $E_{\epsilon}\left(\, u_{\epsilon}(\ \cdot 
\ ,t)\,\right)\leq E_\e(u_\e^{-\infty})$ for all $t$, we obtain $\|\Sigma_t\|(S^3) \leq 2\pi^2 + {\delta'}$, for every $t$. Since $\delta'$ is arbitrary, this implies that $\|\Sigma_t\|(S^3)\leq 2 \pi^2$. \smallskip

By abuse of notation, we will identify $\Sigma_t$ with its support, which is, for almost every $t \in \R$, a $2$-dimensional rectifiable set. We want to show that $\Sigma_t$ (and the associated varifolds $\frac{1}{\sigma}V_t$) converge to a multiplicity one Clifford torus, as $t \to -\infty$  along subsequences, and to a multiplicity one equatorial sphere, as $t \to \infty$, also along subsequences. Note that in general we don't know whether or not $\Sigma_t$ converges to the initial torus $T_c=\{u_\e^{-\infty}=0\}\cap S^3$ as $t \to -\infty$, and $\Sigma_t$ converges to $\frac{1}{\sigma}V^{+\infty}$ (the $\e$-limit of $V_{\e,u_\e^{+\infty}}$) as $t \to +\infty$. This question will be addressed in Section \ref{sec:sym} using symmetries of $S^3$. \medskip

We will need further information about the parity of the multiplicity of the limit varifold $V_t$, as described in the following lemma. It intuitively says that the interfaces fold an even number of times near a point in $\supp\|V_t\|$ if, and only if, $u_\e(\cdot, t)$ converges to the same value on the two sides of this surface. This was proved by Hutchinson-Tonegawa \cite{HT} in the elliptic case; see also Takasao-Tonegawa \cite{TT} in the parabolic case, for an equation with a transport term in Euclidean domains or in a torus.

\begin{lem} \label{odd-even-density}
For almost every $t\in \R$, the density of the varifold $\Sigma_t$ satisfies 
\[\Theta(\Sigma_t,x)=\left\{
\begin{aligned}
\rm{odd}&  \ \ \  \mathcal{H}^{2}\text{-a.e.} \  x \in M_t, \\
\rm{even}&  \ \ \  \mathcal{H}^{2}\text{-a.e.}\ x \in \mathrm{supp}\|\Sigma_t\| \setminus M_t,
\end{aligned}
\right.\]
where $M_{t}$ is the reduced boundary of $\{u_0(\cdot,t) = 1\}$, and $u_0(\cdot,t)$ is the bounded variation function given by the weak-$*$ limit of $u_\e(\cdot,t)$, as functions of bounded variation.
\end{lem}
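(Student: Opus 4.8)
\medskip
\noindent\textbf{Proof proposal.} The plan is to reduce the statement to a fixed generic time slice, identify the limit phase there as (twice the indicator of) a finite–perimeter set, and then quote the structure theory for limit varifolds of Allen--Cahn type. First I would fix the subsequence $\e=\e_j\downarrow0$ along which $\tfrac1\sigma V_{\e,t}\to\tfrac1\sigma V_t$ for all $t$, set $G(s)=\int_0^s\sqrt{2W(\tau)}\,d\tau$ (an increasing homeomorphism of $[-1,1]$ onto $[-\sigma,\sigma]$, by the symmetry and normalization of $W$), and use the pointwise bound $\sqrt{2W(u)}\,|\nabla u|\le\tfrac\e2|\nabla u|^2+\tfrac1\e W(u)$ to get $\int_{S^3}|\nabla(G\circ u_\e(\cdot,t))|\,d\mu_g\le E_\e(u_\e(\cdot,t))\le E_0$ for every $t$. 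Thus $G\circ u_\e(\cdot,t)$ is bounded in $BV(S^3)$; since $G^{-1}$ is continuous and $|u_\e|\le1$, a diagonal subsequence gives $u_\e(\cdot,t)\to u_0(\cdot,t)$ in $L^1$ for every $t$, with $G\circ u_0(\cdot,t)\in BV$. From $\tfrac1\e\int_{S^3}W(u_\e(\cdot,t))\le E_0$ one has $W(u_0(\cdot,t))=0$ a.e., so $u_0(\cdot,t)\in\{\pm1\}$ a.e.; hence $\chi_{\{u_0(\cdot,t)=1\}}=\tfrac{1}{2\sigma}(G\circ u_0(\cdot,t)+\sigma)\in BV(S^3)$, i.e.\ $\{u_0(\cdot,t)=1\}$ has finite perimeter with reduced boundary $M_t$, and lower semicontinuity of the total variation together with $\sqrt{2W(u_\e)}|\nabla u_\e|\,d\mu_g\le2\mu_{\e,t}$ yields $\mathcal H^2\lfloor M_t=\|D\chi_{\{u_0(\cdot,t)=1\}}\|\le\Sigma_t$, so that $M_t\subseteq\supp\|\Sigma_t\|$ and the two cases in the statement partition $\supp\|\Sigma_t\|$.

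Next I would recall that for a.e.\ $t$ the varifold $\tfrac1\sigma V_t$ is integral and the discrepancy $\big|\tfrac\e2|\nabla u_\e(\cdot,t)|^2-\tfrac1\e W(u_\e(\cdot,t))\big|\,d\mu_g$ tends to $0$ (Ilmanen \cite{Ilmanen}), so that at such times $u_\e(\cdot,t)$ meets the hypotheses of the limit–varifold structure theorem of Hutchinson--Tonegawa \cite{HT} --- uniform energy, varifold convergence to an integral varifold, $L^1$–convergence of the phases to a two–valued $BV$ limit, vanishing discrepancy. Its conclusion is precisely that at $\mathcal H^2$-a.e.\ $x\in\supp\|\tfrac1\sigma V_t\|$ the density $\Theta(\tfrac1\sigma V_t,x)$ is a positive integer, odd if $x\in M_t$ and even if $x\notin M_t$; in the parabolic case the forcing $\e\,\partial_t u_\e$ is absorbed as the transport term in Takasao--Tonegawa \cite{TT}, which yields the same conclusion for a.e.\ $t$. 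Since $\Theta(\Sigma_t,x)=\Theta(\tfrac1\sigma\|V_t\|,x)=\Theta(\tfrac1\sigma V_t,x)$, this is exactly the asserted dichotomy.

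I do not expect the reduction above or the bookkeeping to be the difficulty; the genuine content is the density–parity statement itself, which I would simply cite. Morally it says that the unsigned number of transition layers of $u_\e$ accumulating at $x_0\in\supp\|\Sigma_t\|$ --- which equals $\Theta(\tfrac1\sigma V_t,x_0)$ once the discrepancy vanishes --- has the same parity as the net jump $\tfrac12(u_0^+(x_0)-u_0^-(x_0))\in\{-1,0,1\}$ of the limit phase across the approximate tangent plane at $x_0$, and this jump is $\pm1$ exactly on $M_t$. The hard part, which is what \cite{HT} (elliptic) and \cite{TT} (parabolic) actually carry out, is the one–dimensional slicing: cutting $S^3$ by geodesics normal to $\supp\|\Sigma_t\|$, showing that along $\mathcal H^2$-a.e.\ slice $u_\e$ converges to a finite stack of one–dimensional heteroclinics, counting them to compute $\Theta(\tfrac1\sigma V_t,x_0)$, and matching this count modulo $2$ with the one–sided traces of $u_0(\cdot,t)$ --- and, in the parabolic setting, controlling the contribution of $\e\,\partial_t u_\e$ at scale $\e$ along good slices. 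Since this is established in the cited references, I would not reproduce it.
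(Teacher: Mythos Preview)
Your proposal is correct and matches the paper's own treatment: both set up the $BV$ limit phase and then defer the actual parity argument to the structure theorems of Hutchinson--Tonegawa \cite{HT} (elliptic) and Takasao--Tonegawa \cite{TT} (parabolic with transport term), with the paper additionally invoking Ilmanen's clearing-out lemma for the off-support behavior. Your write-up is in fact more detailed on the $BV$ bookkeeping than the paper's brief sketch, but the substance is the same.
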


We give here a brief explanation about the proof of the lemma above. By the \emph{Clearing-out Lemma} of Ilmanen \cite{Ilmanen} (see also Pisante-Punzo \cite[Lemma 4.1]{PP}), we know that $u_{\epsilon}$ converges locally uniformly to either $1$ or $-1$ as $\epsilon \downarrow 0$ at any point in $\{ |u_0^{t}| > \alpha \}$, where $0 < \alpha < 1$ is a constant. In addition, the proof of the integrality \cite{T03} of the varifold $\frac{1}{\sigma}V_t$ is based in an a.e.-graphical decomposition of the transition layers of $u_{\e}$, similarly to the elliptic counterpart in \cite{HT}. Hence, the argument about the oddness and evenness of the density in the proof of Theorem 1 in \cite{HT} can be carried out in the parabolic setting. We refer to \cite{TT} for the detailed proof of the parity of the density.\smallskip

Finally, we can describe the limits of the Brakke flow $\Sigma_t$ in $S^3$.

\begin{thm} \label{graphical}
As $t \to -\infty$, the varifold $\frac{1}{\sigma}V_t$ converges to some multiplicity one minimal torus in $S^3$, and its support converges graphically to this torus.
As $t \to +\infty$, the varifold $\frac{1}{\sigma}V_t$ subconverges to a multiplicity one equatorial sphere.
\end{thm}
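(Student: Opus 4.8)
The plan is to track the total area $A(t):=\|\Sigma_t\|(S^3)$ and combine the monotonicity intrinsic to the Brakke flow with the classification of low-area stationary varifolds in $S^3$ from \cite{CM}. First I would record that $A$, being the limit $A(t)=\lim_{\e\downarrow0}\tfrac{1}{2\sigma}E_\e(u_\e(\cdot,t))$ of strictly decreasing functions of $t$, is non-increasing; together with the estimates already obtained this gives $A(t)\in[5\pi,2\pi^2]$ for $t\le0$ and $A(t)\in[4\pi,5\pi]$ for $t\ge0$, so $A(t)\uparrow L_-\le2\pi^2$ as $t\to-\infty$ and $A(t)\downarrow L_+\ge4\pi$ as $t\to+\infty$. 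I would also use that the limit flow satisfies $\int_{\R}\!\int_{S^3}|H_t|^2\,d\mu_t\,dt<\infty$ — this follows from the Brakke inequality with $\phi\equiv1$, which gives $\int_{t_1}^{t_2}\!\int|H_t|^2\,d\mu_t\,dt\le A(t_1)-A(t_2)\le L_--L_+$, and is also implicit in the construction of \cite{Ilmanen,T03} from the uniform bound $\int_{S^3\times\R}\e|\partial_t u_\e|^2\le E_0$ of Lemma \ref{bounds}.

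Next I would extract stationary limits at $\pm\infty$. Using $\int_\R\!\int|H_t|^2<\infty$, choose times $t_k\to-\infty$ (resp.\ $t_k\to+\infty$) in the full-measure set where $\tfrac1\sigma V_t$ is integral and Lemma \ref{odd-even-density} holds, with $\int_{S^3}|H_{t_k}|^2\,d\mu_{t_k}\to0$. Then the $\tfrac1\sigma V_{t_k}$ are integral $2$-varifolds of mass $\le2\pi^2$ with first-variation mass tending to $0$, so Allard's integral compactness theorem yields a subsequential limit $V_{\mp\infty}$ which is a stationary integral $2$-varifold of mass $\lim_k A(t_k)=L_\mp$ (total mass being continuous under varifold convergence on the compact manifold $S^3$). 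From Lemma \ref{odd-even-density}, each $\tfrac1\sigma V_{t_k}$ is cyclic $\bmod\,2$ — its $\mathbb Z_2$-chain is the boundary of $\{u_0(\cdot,t_k)=1\}$ — and I would argue that this persists in the limit via compactness of mod-$2$ flat chains with bounded mass, the flat and varifold limits coinciding since $V_{\mp\infty}$ has multiplicity one (cf.\ \cite{W09}). Hence $V_{\mp\infty}$ is a stationary, cyclic $\bmod\,2$, integral $2$-varifold in $S^3$ of mass $\le2\pi^2$, so by \cite[Lemma 5.8]{CM} it is a multiplicity-one equator or a Clifford torus. For $t_k\to-\infty$ the bound $L_-\ge A(0)=5\pi>4\pi$ forces $V_{-\infty}$ to be a multiplicity-one Clifford torus and $L_-=2\pi^2$; for $t_k\to+\infty$ the bound $L_+\le A(0)=5\pi<2\pi^2$ forces $V_{+\infty}$ to be a multiplicity-one equator and $L_+=4\pi$. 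This already proves the statement as $t\to+\infty$.

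To promote the backward statement to full graphical convergence, I would first show that \emph{every} subsequential varifold limit of $\tfrac1\sigma V_t$ as $t\to-\infty$ is a multiplicity-one Clifford torus: given $s_j\to-\infty$, pick $\tau_j\in(s_j-1,s_j)$ in the good set with $\int_{S^3}|H_{\tau_j}|^2\,d\mu_{\tau_j}\to0$ (possible since $\int_{-\infty}^{s_j}\!\int|H_t|^2\,d\mu_t\,dt\to0$); then $\tfrac1\sigma V_{\tau_j}$ subconverges to a multiplicity-one Clifford torus, and since $|s_j-\tau_j|\le1$, $\int_{\tau_j}^{s_j}\!\int|H_t|^2\,d\mu_t\,dt\to0$ and $A(\tau_j)-A(s_j)\to0$, the standard Brakke-flow estimate $\bigl|\int\phi\,d\mu_{s_j}-\int\phi\,d\mu_{\tau_j}\bigr|\lesssim\|\phi\|_{C^1}\bigl((s_j-\tau_j)^{1/2}(\int_{\tau_j}^{s_j}\!\int|H_t|^2\,d\mu_t\,dt)^{1/2}+A(\tau_j)-A(s_j)\bigr)$ shows $\tfrac1\sigma\mu_{s_j}$ and $\tfrac1\sigma\mu_{\tau_j}$ have the same limit, whence the same holds for $\tfrac1\sigma V_{s_j}$. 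Since the family of Clifford tori in $S^3$ is compact, it follows that for $t$ sufficiently negative $\Sigma_t$ is close, as a varifold, to a multiplicity-one Clifford torus, hence has density ratios near its support close to $1$ at small scales, and Brakke's Local Regularity Theorem \cite{IlmanenRegularization} provides $T\in\R$ such that $\{\Sigma_t\}_{t\le T}$ is a smooth mean curvature flow with uniform curvature estimates. This smooth ancient flow in $S^3$ has area $\le2\pi^2$ and a Clifford torus as a backward subsequential limit, so by the classification of smooth ancient flows of Choi--Mantoulidis \cite[Corollary 1.5]{CM} it is a steady or decreasing Clifford torus along one of its five unstable directions; in particular $\supp\|\Sigma_t\|$ converges smoothly — hence graphically — to a single multiplicity-one Clifford torus as $t\to-\infty$.

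The step I expect to be the main obstacle is the extraction and identification of the stationary limits $V_{\pm\infty}$: one needs the finiteness of $\int_\R\!\int_{S^3}|H_t|^2\,d\mu_t\,dt$ (equivalently the uniform $\int\e|\partial_t u_\e|^2$ bound) to produce time-slices whose first variations vanish in the limit, and one must check carefully that the mod-$2$ cyclicity of Lemma \ref{odd-even-density} is preserved under the varifold limit so that \cite[Lemma 5.8]{CM} can be applied. For the backward conclusion the remaining difficulty — upgrading ``every subsequential limit is a Clifford torus'' to a single graphical limit — is then handled by Brakke's local regularity together with the Choi--Mantoulidis rigidity for ancient flows near the (integrable) Clifford torus.
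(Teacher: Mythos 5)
Your proposal is correct and follows essentially the same route as the paper: monotonicity of the area together with the normalization $\|\Sigma_t\|(S^3)\le 5\pi\le\|\Sigma_{-t}\|(S^3)\le 2\pi^2$ for $t\ge 0$, persistence of the cyclic mod $2$ condition from Lemma \ref{odd-even-density} via White \cite{W09}, the classification of low-area stationary cyclic integral varifolds from \cite[Lemma 5.8]{CM}, and Brakke's local regularity plus the Choi--Mantoulidis rigidity of ancient flows for the graphical backward convergence. The only technical difference is that you extract the stationary limits by selecting time slices with $\int_{S^3}|H_{t_k}|^2\,d\mu_{t_k}\to 0$ and invoking Allard compactness, whereas the paper translates the flow in time and applies Brakke's compactness theorem to obtain a constant-area (hence stationary) limit Brakke flow; the two mechanisms are interchangeable here.
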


\begin{proof}
Consider any sequence $\theta_i \uparrow \infty$, and the sequence of translated Brakke flows $\{\Sigma_t^{(i)} := \Sigma_{t+ \theta_i}\}_{t \geq 0}$. By Brakke’s compactness theorem and the uniform boundedness of areas, $(\Sigma_t^{(i)})_{t\geq 0}$ converges subsequentially to an integral Brakke flow with constant area. Therefore, this Brakke flow is supported on a stationary integral varifold $\frac{1}{\sigma} V_{+\infty}$. Similarly, $\Sigma_t$ subconverges, as $t \to -\infty$, to a stationary integral varifold $\frac{1}{\sigma} V_{-\infty}$.

By Lemma \ref{odd-even-density}, the associated $\mathbb{Z}_2$ chain of $\frac{1}{\sigma}V_t$ is $M_0^{t}$ is the reduced boundary of $\{u_0^{t} = 1 \}$, thus it has vanishing boundary. By White \cite{W09} Theorem $4.2$, we know that the associated $\mathbb{Z}_2$ chain of any subsequential limit varifold $\frac{1}{\sigma} V_{+\infty}$ or $\frac{1}{\sigma} V_{-\infty}$ has zero boundary.

We have shown the area estimate $\|\Sigma_t\|(S^3) \leq 2 \pi^2$. By \cite[Lemma 5.8]{CM}, it follows that any such limit $\frac{1}{\sigma} V_{-\infty}$ or $\frac{1}{\sigma} V_{+\infty}$ is either a multiplicity one equatorial sphere or a multiplicity one  Clifford torus. On the other hand, we have the inequality $\|\Sigma_{-t}\|(M) \geq 5 \pi \geq \|\Sigma_t\|(M)$ for every $t \geq 0$. Thus $\frac{1}{\sigma} \| V_{-\infty}\|(S^3) \geq 5\pi \geq \frac{1}{\sigma} \|V_{+\infty}\|(S^3)$, and we conclude that any subsequential limit $\frac{1}{\sigma} V_{-\infty}$ is a Clifford torus, and any subsequential limit $\frac{1}{\sigma} V_{+\infty}$ is a multiplicity one equatorial sphere. The convergence as $t \to -\infty$ and the graphical convergence of $\Sigma_t$ to the minimal torus, follows from \cite{CM}, by means of Brakke's local regularity for the mean curvature flow \cite{Brakke} (see also \cite{KT}), as the characterization of the backward limit allows us to obtain the smoothness of $\Sigma_t$ for sufficiently negative time.
\end{proof} 

As a final remark, we note that the energy $u_{\e_j}(\cdot,t)$ has a limit as $(\e_j,t) \to (0,-\infty)$ (where $\e_j \downarrow 0$ is a subsequence for which the associated varifolds converge):

\begin{lem}
Given $\delta>0$, there exist a positive integer $J_0=J_0(\delta)$ and $T_0=T_0(\delta)>0$ (independent of $\e$) such that
    \[0<E_{\e_j}(u_{\e_j}^{-\infty}) - E_{\e_j}(u_{\e_j}(\, \cdot \, , t)) < \delta, \qquad \text{for all} \ j\geq J_0 \ \text{and all} \ t<-T_0.\]
Consequently, 
\begin{align*}
\lim\limits_{(j,t) \to (+\infty,-\infty)} E_{\e_j}(u_{\e_j}(\cdot, t)) = 2\sigma \cdot 2\pi^2.
\end{align*}
\end{lem}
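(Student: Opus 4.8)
\emph{Proof sketch (plan).} The plan is to separate the two assertions. The strict positivity is immediate from the hypotheses of Proposition \ref{prop2}: hypothesis (i) says $t \mapsto E_{\e_j}(u_{\e_j}(\cdot,t))$ is strictly decreasing, while hypothesis (ii) together with the continuity of $E_{\e_j}$ on $W^{1,2}(S^3)$ gives $E_{\e_j}(u_{\e_j}(\cdot,t)) \to E_{\e_j}(u_{\e_j}^{-\infty})$ as $t \to -\infty$. Hence $E_{\e_j}(u_{\e_j}(\cdot,t)) < E_{\e_j}(u_{\e_j}^{-\infty})$ for every finite $t$, and the nonnegative quantity $E_{\e_j}(u_{\e_j}^{-\infty}) - E_{\e_j}(u_{\e_j}(\cdot,t))$ is non-decreasing in $t$. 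Consequently it suffices to produce $T_0 = T_0(\delta)$ and $J_0 = J_0(\delta)$, independent of $\e$, with $E_{\e_j}(u_{\e_j}^{-\infty}) - E_{\e_j}(u_{\e_j}(\cdot,-T_0)) < \delta$ for all $j \ge J_0$.

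For this I would compare the ($\e_j$-dependent) energy profiles $g_j(t) := \frac{1}{2\sigma} E_{\e_j}(u_{\e_j}(\cdot,t))$ with their pointwise limit. By the convergence theorem of Ilmanen and Tonegawa (item (i)), $g_j(t) \to \|\Sigma_t\|(S^3)$ as $j \to \infty$, for every $t \in \R$. Set $h(t) := \|\Sigma_t\|(S^3)$; then $h$ is non-increasing (a pointwise limit of non-increasing functions) and $h \le 2\pi^2$. The key step is the asymptotic value $h(t) \to 2\pi^2$ as $t \to -\infty$: by Theorem \ref{graphical}, $\frac{1}{\sigma}V_t$ converges fully, as $t \to -\infty$, to a multiplicity one minimal torus, and since $S^3$ is compact this varifold convergence forces convergence of masses, so $h(t) \to \mathrm{Area}(T_c) = 2\pi^2$.

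With this in hand the rest is bookkeeping. Given $\delta$, put $\eta = \delta/(6\sigma)$; using $h(t) \uparrow 2\pi^2$ as $t \to -\infty$, choose $T_0 > 0$ with $h(-T_0) > 2\pi^2 - \eta$; then choose $J_0$ so that for all $j \ge J_0$ both $g_j(-T_0) > 2\pi^2 - 2\eta$ (possible since $g_j(-T_0) \to h(-T_0)$) and $\frac{1}{2\sigma} E_{\e_j}(u_{\e_j}^{-\infty}) < 2\pi^2 + \eta$ (possible since $E_{\e_j}(u_{\e_j}^{-\infty}) \to 2\sigma \cdot 2\pi^2$ by the construction of $u_{\e_j}^{-\infty}$). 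For $j \ge J_0$ and $t < -T_0$, monotonicity of $g_j$ gives $g_j(t) \ge g_j(-T_0)$, hence
\[E_{\e_j}(u_{\e_j}^{-\infty}) - E_{\e_j}(u_{\e_j}(\cdot,t)) = 2\sigma\bigl(\tfrac{1}{2\sigma}E_{\e_j}(u_{\e_j}^{-\infty}) - g_j(t)\bigr) < 2\sigma\bigl((2\pi^2 + \eta) - (2\pi^2 - 2\eta)\bigr) = 6\sigma\eta = \delta.\]
The displayed limit then follows by adding and subtracting $E_{\e_j}(u_{\e_j}^{-\infty})$ and using the estimate just proved on $\{\,j \ge J_0,\ t < -T_0\,\}$ together with $E_{\e_j}(u_{\e_j}^{-\infty}) \to 2\sigma \cdot 2\pi^2$.

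The one place I expect actual content (rather than soft arguments) is the asymptotic identity $h(t) \to 2\pi^2$: a priori each $g_j$ relaxes to its limit at a rate governed by the Choi--Mantoulidis exponential decay, which degenerates as $\e_j \downarrow 0$, so uniformity in $j$ is invisible at the level of the individual flows. It is recovered only because the limiting energy profile $h$ is a \emph{single} monotone function whose value at $-\infty$ is pinned to $\mathrm{Area}(T_c)$ by Theorem \ref{graphical}; the only care needed is to invoke mass convergence from the \emph{full} (not merely subsequential) varifold convergence there. Everything else in the argument is elementary.
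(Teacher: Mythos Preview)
Your proposal is correct and follows essentially the same approach as the paper: both arguments hinge on the three ingredients (i) $E_{\e_j}(u_{\e_j}^{-\infty}) \to 2\sigma\cdot 2\pi^2$, (ii) $\|\Sigma_{t}\|(S^3) \to 2\pi^2$ as $t\to-\infty$ via Theorem~\ref{graphical}, and (iii) the pointwise convergence $\frac{1}{2\sigma}E_{\e_j}(u_{\e_j}(\cdot,t)) \to \|\Sigma_t\|(S^3)$, combined with monotonicity of the energy along the flow, and both even use the same $\delta/(6\sigma)$ splitting. The only cosmetic difference is that the paper packages the argument as a proof by contradiction, whereas you give the direct version; your identification of the one substantive point (that uniformity in $j$ is recovered only through the limiting energy profile $h$, pinned by Theorem~\ref{graphical}) matches the paper's logic exactly.
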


\begin{proof}
If not, since $\{u_{\e_j}(\cdot,t)\}$ are nonconstant gradient flows for $-E_{\e_j}$, there exists a $\delta > 0$, a subsequence $\{\e_{i}=\e_{j_i}\}$ of $\{\e_j\}$ and sequence $t_i \to -\infty$ such that $E_{\e_i}(u_{\e_i}^{-\infty}) - E_{\e_i}(u_{\e_i}(\cdot, t_i)) \geq \delta$ for all $i$.  We claim that this leads to a contradiction, for sufficiently large $i$.

By construction, $E_{\e}(u_{\e}^{-\infty}) \to 2\sigma(2\pi^2)$, so there exists $\e' > 0$ such that for any $\e < \e'$, it holds $E_{\e}(u_{\e}^{-\infty}) < 2\sigma(2\pi^2 + \frac{\delta}{6\sigma})$. On the other hand, $\Sigma_t$ (and the associated varifolds $\frac{1}{\sigma} V_t$) converge to a Clifford torus, thus there exists $t' \ll 0$ such that $\|\Sigma_{t'}\|(S^3) > 2\pi^2 - \frac{\delta}{6\sigma}$. For this fixed $t'$, using $\frac{1}{2\sigma} E_{\epsilon_i}(u_{\epsilon_i}(\cdot ,t')) \to \|\Sigma_{t'}\| (S^3)$ as $i \to +\infty$, we get some positive integer $I$ such that
    \[\left|\frac{1}{2\sigma} E_{\epsilon_i}(u_{\epsilon_i}(\cdot ,t')) - \|\Sigma_{t'}\| (S^3)\right| < \frac{\delta}{6\sigma}\]
for any $i \geq I$. Hence $E_{\e}(u_{\e}(\cdot,t')) > 2\sigma(\|\Sigma_{t'}\|(S^3) - \frac{\delta}{6\sigma})$.

Since $\e_i \downarrow 0$ and $t_i \to -\infty$, we may assume $I$ is such that $\e_i < \e'$ and $t_i < t'$ whenever $i \geq I$. Then (using again that the energy decreases along the flow),
    \begin{align*}
        E_{\e_i}(u_{\e_i}^{-\infty}) - \delta &\geq E_{\e_i}(u_{\e_i}(\cdot, t_i)) > E_{\e_i}(u_{\e_i}(\cdot, t')) \\
        & > 2\sigma\left(\|\Sigma_{t'}\|(S^3) - \frac{\delta}{6\sigma}\right)\\
        &> 2\sigma\left(2\pi^2 - \frac{\delta}{3\sigma}\right) = 2\sigma\left(2\pi^2+\frac{\delta}{6\sigma}\right) - \delta > E_{\e_i}(u_{\e_i}^{-\infty}) - \delta,
    \end{align*}
so we get a contradiction.
\end{proof}

\section{Symmetries and proof of main results} \label{sec:sym}

We use the notation introduced in Section \ref{sec:gradient} to describe the ancient solutions of the gradient flow of $-E_\e$ given by the results of Choi-Mantoulidis \cite{CM}, for the critical point $u_\e^{-\infty}$  which has the Clifford torus $T_c$ as its nodal set. Recall that, for $\e \in (0,\e_4)$ (as given by Lemma \ref{equatorial}), we denote by $\{\varphi_i\}_{i=1}^5$ an $L^2$-orthonormal basis for the eigenspaces of the linearized Allen-Cahn operator at $u_\e^{-\infty}$ corresponding to negative eigenvalues, where we assume $\varphi_1>0$. We will denote by $\mathcal{V}_1$ the first eigenspace, which is spanned by $\varphi_1$, and by $\mathcal{V}$ the eigenspace spanned by $\{\varphi_i\}_{i=2,3,4,5}$.

We also recall the solution map $\mathscr{S}_\e=\mathscr{S}\colon B_\eta(0)\subset \R^5 \to C^{2,\alpha}(S^3 \times \R)$, defined for some $\eta=\eta_\e>0$ (these solutions are also defined for all $t> 0$ by the long-time existence result described in the proof of Proposition \ref{prop_parabolicflow}). We will use the action of the isometry group of $S^3$ by pre-composition to study the limit in time of some of these solutions. As a consequence, we will establish the existence of two-parameter family of solutions to this parabolic equation joining $u_\e^{-\infty}$ to ground states, as well as Brakke flows joining the Clifford torus $T_c$ to equatorial spheres. This will conclude the proof of the main results.

\subsection{Isometries and invariance}

We introduce some isometries of $S^3$ that leave $u_\e^{-\infty}$ invariant. First, for $\theta \in \R$, let $\rho^\theta,\tau^\theta\colon S^3 \to S^3$ denote the rotations
		\begin{align*}
		    \rho^\theta(x) & = (x_1\cos \theta - x_2 \sin \theta , x_1 \sin \theta + x_2 \cos\theta, x_3,x_4)\\
		    \tau^\theta(x) & = (x_1,x_2, x_3\cos \theta - x_4 \sin \theta , x_3 \sin \theta + x_4 \cos\theta)
		\end{align*}
We also regard $\rho^\theta$ and $\tau^\theta$ naturally as isometries of $S^3 \times \R$, acting on the first factor, and as isometries of $B_\eta(0)\subset \R^5$ acting on the last $4$ coordinates, using the same notation for simplicity, as in $R(a_1,\ldots, a_5) = (a_1,R(a_2,a_3,a_4,a_5))$.\smallskip
	
Observe that $u_\e^{-\infty}$ is invariant by both $\rho^\theta$ and $\tau^\theta$. Thus, these rotations leave $\varphi_1$ invariant and act linearly and isometrically on the left on the eigenspace $\mathcal{V}$ by pre-composition, that is
    \[\varphi \mapsto \varphi \circ \rho^{-\theta} \quad \text{and} \quad \varphi \mapsto \varphi \circ \tau^{-\theta}.\]
We can describe this action as rotations in $\mathcal{V}$. In fact, this describes a representation of $\mathrm{SO}(2) \times \mathrm{SO}(2)$ on $\mathcal{V}$ taking values in $\mathrm{SO}(\mathcal{V}) \simeq \mathrm{SO}(4)$.  We will need the following observation:

\begin{lem} \label{representation}
Let $\e \in (0,\e_4)$, where $\e_4$ is given by Lemma \ref{equatorial}. The kernel of the representation described above is trivial. That is, if $(\theta,\zeta) \in \R^2$ is such that $\varphi \in \mathcal{V}\mapsto \varphi \circ (\rho^{-\theta} \circ \tau^{-\zeta})$ is the identity map, then $\frac{\theta}{2\pi},\frac{\zeta}{2\pi} \in \mathbb{Z}$. 
\end{lem}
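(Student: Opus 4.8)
The plan is to identify the negative eigenspace $\mathcal{V}$ (spanned by $\varphi_2,\dots,\varphi_5$) of the linearized Allen--Cahn operator at $u_\e^{-\infty}$ with the span of a concrete set of functions on $S^3$ on which the $\mathrm{SO}(2)\times\mathrm{SO}(2)$-action $\varphi\mapsto\varphi\circ(\rho^{-\theta}\circ\tau^{-\zeta})$ can be computed explicitly, and then to check that this action has trivial kernel. The main point is that, because $u_\e^{-\infty}$ inherits all the symmetries of the Clifford torus $T_c$, its linearized operator $\mathcal{L}_{\e,u_\e^{-\infty}}$ commutes with the isometries of $S^3$ that fix $T_c$, in particular with $\rho^\theta$ and $\tau^\theta$; hence each eigenspace is a representation of the group generated by these rotations. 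I would first recall (from \cite{H}, or from the analogous computation for the Clifford torus' Jacobi operator in \cite{U}) that for $\e\in(0,\e_4)$ the four-dimensional eigenspace $\mathcal{V}$ corresponds, under the convergence of the nodal set to $T_c$ as $\e\downarrow 0$, to the four-dimensional space of Jacobi fields on $T_c$ generated by the ambient rotations that move $T_c$ — concretely, functions built from the coordinate monomials $x_1x_3, x_1x_4, x_2x_3, x_2x_4$ restricted near $T_c$, which are exactly the ``first spherical harmonic'' directions in each $\mathbb{R}^2$-factor. The action of $\rho^\theta\times\tau^\zeta$ on this 4-dimensional space is then the tensor product of the standard rotation of $\mathbb{R}^2$ by $\theta$ with the standard rotation of $\mathbb{R}^2$ by $\zeta$.

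With that identification in hand, the computation is immediate: on $\mathrm{span}\{x_1,x_2\}\otimes\mathrm{span}\{x_3,x_4\}$ the operator $\varphi\mapsto\varphi\circ(\rho^{-\theta}\circ\tau^{-\zeta})$ acts as $R_\theta\otimes R_\zeta$, where $R_\alpha$ is the rotation matrix by angle $\alpha$. This equals the identity on the tensor product if and only if $R_\theta\otimes R_\zeta = I_4$, which forces $R_\theta=\pm I_2$ and $R_\zeta=\pm I_2$ with the same sign; the mixed-sign case $R_\theta=-I_2$, $R_\zeta=-I_2$ is excluded because $(-I_2)\otimes(-I_2)=I_4$ would then also be the identity — so one must be slightly more careful and argue directly. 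I would instead argue: if $R_\theta\otimes R_\zeta=I_4$ then, taking a unit vector $v\in\mathbb{R}^2$ and computing $(R_\theta v)\otimes(R_\zeta w)=v\otimes w$ for all $w$, we get $R_\theta v\otimes R_\zeta = v\otimes I$ as rank-one-valued maps, forcing $R_\zeta$ to be a scalar multiple of the identity, hence $R_\zeta=\pm I_2$, and symmetrically $R_\theta=\pm I_2$, and then $R_\theta\otimes R_\zeta=I_4$ forces the product of the two signs to be $+1$. To rule out the remaining case $R_\theta=R_\zeta=-I_2$ (i.e. $\theta,\zeta\in\pi+2\pi\mathbb{Z}$), one uses that $\rho^{-\pi}\circ\tau^{-\pi}=-\mathrm{id}_{S^3}$ is the antipodal map, and since $u_\e^{-\infty}$ is an \emph{odd} function (its nodal set is $T_c$, and by the uniqueness-modulo-sign reflection construction it satisfies $u_\e^{-\infty}(-x)=-u_\e^{-\infty}(x)$), the antipodal map acts by $-1$ on \emph{all} eigenfunctions, so the induced map on $\mathcal{V}$ is $-\mathrm{id}_\mathcal{V}\neq\mathrm{id}_\mathcal{V}$; this contradiction eliminates the mixed case and leaves only $\theta,\zeta\in 2\pi\mathbb{Z}$.

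The step I expect to be the main obstacle is the first one: rigorously identifying the action of $\rho^\theta\times\tau^\zeta$ on $\mathcal{V}$ as $R_\theta\otimes R_\zeta$ for the Allen--Cahn eigenspace at positive $\e$, rather than just in the $\e\downarrow 0$ limit on the Jacobi operator of $T_c$. The clean way around this is representation-theoretic and avoids any $\e\downarrow 0$ argument: the group $G=\mathrm{SO}(2)\times\mathrm{SO}(2)$ acts on the 4-dimensional real space $\mathcal{V}$ by isometries commuting with $\mathcal{L}_{\e,u_\e^{-\infty}}$; decompose $\mathcal{V}$ into $G$-irreducibles. Since $\varphi_1$ (the positive ground eigenfunction) spans a $G$-fixed line, $\mathcal{V}$ contains no trivial summand (by simplicity of $\lambda_1^\e$ and the fact that a $G$-fixed function orthogonal to $\varphi_1$ with the right symmetry type cannot exist in the index-5 picture — here I would cite the structure of the negative eigenspace from \cite{H}). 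The real irreducibles of $\mathrm{SO}(2)\times\mathrm{SO}(2)$ are 2-dimensional of the form ``rotate by $m\theta+n\zeta$'' or ``$m\theta-n\zeta$''; a 4-dimensional sum of such with no trivial piece, whose $\e\downarrow 0$ degeneration must match the known $T_c$-Jacobi-field representation (rotations of $T_c$, which are of type $(m,n)=(1,1)$ and $(1,-1)$), pins down $\mathcal{V}\cong V_{(1,1)}\oplus V_{(1,-1)}$; and on this sum the kernel of the $G$-action is exactly $\{(\theta,\zeta):\theta\pm\zeta\in 2\pi\mathbb{Z}\}=2\pi\mathbb{Z}\times 2\pi\mathbb{Z}$ together with the antipodal point $(\pi,\pi)$ — which is again killed by the oddness of $u_\e^{-\infty}$ as above. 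Either route gives the claim; I would write up the representation-theoretic one as the main argument and relegate the explicit monomial computation to a remark.
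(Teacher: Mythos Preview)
Your approach has a genuine error at the very first step, and this propagates through the rest of the argument. You identify the four-dimensional eigenspace $\mathcal{V}$ with the Jacobi fields on $T_c$ \emph{generated by ambient rotations that move $T_c$}, i.e.\ with the span of $x_1x_3, x_1x_4, x_2x_3, x_2x_4$. But those are the Jacobi fields in the \emph{kernel} of the stability operator (eigenvalue $0$), not the negative eigenspace. A direct computation of the spectrum of $J = \Delta_{T_c} + 4$ on the flat torus shows the eigenvalue $-2$ eigenspace is spanned by $\cos\theta_1, \sin\theta_1, \cos\theta_2, \sin\theta_2$, i.e.\ by the restrictions of the \emph{linear} coordinate functions $x_1,x_2,x_3,x_4$ to $T_c$. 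On this space the action of $(\rho^\theta,\tau^\zeta)$ is the \emph{direct sum} $R_\theta \oplus R_\zeta$, not the tensor product $R_\theta \otimes R_\zeta$; in your representation-theoretic language, $\mathcal{V} \cong V_{(1,0)} \oplus V_{(0,1)}$ rather than $V_{(1,1)} \oplus V_{(1,-1)}$. With the correct identification the kernel is immediately $2\pi\mathbb{Z} \times 2\pi\mathbb{Z}$ and no antipodal case arises. Two further issues: the claim that $u_\e^{-\infty}$ is odd is false --- the antipodal map preserves $x_1^2+x_2^2$, hence preserves each solid torus bounded by $T_c$, so uniqueness of the minimizer forces $u_\e^{-\infty}(-x)=u_\e^{-\infty}(x)$; and even if it were odd, $W''$ is even, so the linearized operator would still commute with the antipodal map and its eigenfunctions could have either parity.

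The paper's argument is entirely different and avoids identifying the representation on $\mathcal{V}$ at fixed $\e>0$ altogether. It is dynamical: by Propositions~\ref{prop_parabolicflow} and~\ref{prop2} there exists $a$ such that $\mathscr{S}(a)(\cdot,t)$ converges as $t\to+\infty$ to a ground state $w$ with nodal equator $y^\perp$. If $(\theta,\zeta)$ lies in the kernel of the representation, then uniqueness of the ancient flow forces $\mathscr{S}(a)$, hence $w$, to be invariant under $\rho^{-\theta}\circ\tau^{-\zeta}$; since $w$ (and not merely its zero set) is fixed, this rotation fixes $y$ itself, and since $y_1^2+y_2^2\neq 0$ or $y_3^2+y_4^2\neq 0$ one reads off $\theta\in 2\pi\mathbb{Z}$ or $\zeta\in 2\pi\mathbb{Z}$; the isometry $s$ then gives the other coordinate. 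This sidesteps any $\e\downarrow 0$ continuity-of-representation argument, which in your corrected outline would still need justification.
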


We postpone the proof of the lemma above for later. As a consequence, we can choose the eigenfunctions $\varphi_i$ in a way that $\{\rho^\theta\}$ acts on $\{\varphi_2,\varphi_3\}$ by rotation and fixes $\{\varphi_4,\varphi_5\}$ pointwise, while $\{\tau^\theta\}$ acts on the latter by rotations and fixes $\varphi_2,\varphi_3$. Concretely, we may assume
	\begin{equation} \label{rotations}
	    \left\{ \begin{array}{rcl} \varphi_2 \circ \rho^{-\theta} & =& \ \ \left(\cos\theta\right) \varphi_2 + \left(\sin\theta\right) \varphi_3 \\ \varphi_3 \circ \rho^{-\theta} & = & \left(-\sin\theta\right) \varphi_2 + \left(\cos\theta\right) \varphi_3 \\ \varphi_i \circ \rho^{-\theta} &=& \varphi_i, \qquad i=4,5, \end{array} \right. \quad \text{and} \quad \left\{ \begin{array}{rcl} \varphi_4 \circ \tau^{-\theta} & =& \ \ \left(\cos\theta\right) \varphi_4 + \left(\sin\theta\right) \varphi_5 \\ \varphi_5 \circ \tau^{-\theta} & = & \left(-\sin\theta\right) \varphi_4 + \left(\cos\theta\right) \varphi_5 \\ \varphi_i \circ \tau^{-\theta} &=& \varphi_i,\qquad i=2,3. \end{array} \right.
	  \end{equation} 
This follows from the lemma above, as the injective image of this representation in $\mathrm{SO}(4)$ is a closed $2$-torus subgroup. Then, it suffices to note that any maximal torus in $\mathrm{SO}(4)$ is conjugated to the standard torus $\mathrm{SO}(2) \times \mathrm{SO}(2) \subset \mathrm{SO}(4)$. Up to changing the basis of $\mathcal{V}$, we obtain \eqref{rotations}. We emphasize that the minus sign in the angles are chosen so that we have an action on the left, similarly to the actions  on $S^3$, $S^3\times \R$ and $B_\eta(0)$.\smallskip

The uniqueness of solutions of \eqref{PAC} that converge back to $u_\e^{-\infty}$ can be used to establish the following equivariance property:
	
\begin{lem} \label{lem:equivariance}
The solution map $\mathscr{S}$ satisfies
	\[\mathscr{S}(a)(\rho^{-\theta} (x),t) = \mathscr{S}(\rho^{\theta} (a))(x,t), \quad \text{for any} \quad (x,t) \in S^3 \times \R,\]
for every $\theta \in \R$, and similarly for $\tau^\theta$.
\end{lem}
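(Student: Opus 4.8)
The plan is to use the uniqueness characterization of the solution map $\mathscr{S}$ recalled from \cite[Theorems 3.3 and 4.1]{CM}. Fix $\theta \in \R$ and $a \in B_\eta(0)$, and set $v(x,t) := \mathscr{S}(a)(\rho^{-\theta}(x), t)$. Since $\rho^\theta$ is an isometry of $S^3$ (acting on the first factor of $S^3 \times \R$, preserving the time variable), pre-composition with $\rho^{-\theta}$ commutes with $\Delta_g$ and $\partial_t$, so $v$ is again a solution of the parabolic equation \eqref{parabolic_Aflow} on $S^3 \times (-\infty, 0]$. Moreover, because $u_\e^{-\infty}$ is $\rho^\theta$-invariant and $\mathscr{S}(a)(\cdot,t) \to u_\e^{-\infty}$ in $C^{2,\theta}$ as $t \downarrow -\infty$, we get $v(\cdot, t) = \mathscr{S}(a)(\rho^{-\theta}(\cdot), t) \to u_\e^{-\infty} \circ \rho^{-\theta} = u_\e^{-\infty}$ in $C^{2,\theta}$ as well; and the $L^1(S^3 \times (-\infty,0])$ norm of $v - u_\e^{-\infty}$ equals that of $\mathscr{S}(a) - u_\e^{-\infty}$ since $\rho^{-\theta}$ is measure-preserving, hence finite. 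So $v$ is an ancient solution converging backward to $u_\e^{-\infty}$ satisfying the same integrability condition.

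By the uniqueness clause in \cite{CM}, such a solution is determined, modulo translation in time, by its projection onto $\mathrm{span}\{\varphi_j\}_{j=1}^5$ at time $t=0$. So the second step is to compute this projection for $v$. Write $P$ for the $L^2$-orthogonal projection onto $\mathcal{V}_1 \oplus \mathcal{V}$. By definition $P(\mathscr{S}(a)(\cdot,0)) = u_\e^{-\infty} + \sum_j a_j \varphi_j$. Using that $\rho^\theta$ is an $L^2$-isometry and that both $u_\e^{-\infty}$ and the eigenspace $\mathcal{V}_1 \oplus \mathcal{V}$ are $\rho^\theta$-invariant (the latter by \eqref{rotations}), we get
\[
P(v(\cdot,0)) = P\big(\mathscr{S}(a)(\cdot,0)\big) \circ \rho^{-\theta} = u_\e^{-\infty} + \sum_{j=1}^5 a_j \, (\varphi_j \circ \rho^{-\theta}).
\]
Now substitute the explicit rotation formulas \eqref{rotations}: since $\rho^{-\theta}$ fixes $\varphi_1, \varphi_4, \varphi_5$ and rotates $(\varphi_2,\varphi_3)$, the right-hand side equals $u_\e^{-\infty} + \sum_j (\rho^\theta a)_j \varphi_j$, where $\rho^\theta a$ denotes the action on $\R^5$ introduced above (rotating coordinates $2,3$, fixing the rest). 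That is precisely the $t=0$ projection data defining $\mathscr{S}(\rho^\theta a)$. Since both $v$ and $\mathscr{S}(\rho^\theta a)$ are ancient solutions with the same backward limit, the same integrability, and the same projection at $t=0$, uniqueness forces $v = \mathscr{S}(\rho^\theta a)$ on $S^3 \times (-\infty, 0]$, which is the claimed identity for $t \le 0$.

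To extend the identity to all $t \in \R$, I would invoke the long-time existence and uniqueness of the forward flow: both $(x,t) \mapsto \mathscr{S}(a)(\rho^{-\theta}(x),t)$ and $(x,t)\mapsto \mathscr{S}(\rho^\theta a)(x,t)$ solve \eqref{PAC} on $S^3\times[0,\infty)$ (the first because $\rho^\theta$ commutes with the flow, as in the proof of Proposition \ref{prop_parabolicflow}), and they agree at $t=0$ by the previous paragraph, so by uniqueness of solutions of the parabolic Cauchy problem with $C^2$ initial data (Lemma 2.3 in \cite{GG}, continuous dependence from \cite{CH}) they coincide for all $t \ge 0$ as well. The argument for $\tau^\theta$ is verbatim the same, using the second set of formulas in \eqref{rotations}. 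The main obstacle here is bookkeeping rather than conceptual: one must be careful that the ``projection at $t=0$'' normalization interacts correctly with the isometry, i.e. that $P$ and $\rho^{-\theta}$ commute — this is exactly where Lemma \ref{representation} and the normalization \eqref{rotations} are used — and that the time-translation ambiguity in the uniqueness statement does not spoil the equivariance, which it does not because both sides are normalized at the same time slice $t=0$.
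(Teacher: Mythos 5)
Your proof is correct and follows essentially the same route as the paper's: invariance of $u_\e^{-\infty}$ under $\rho^\theta$, the uniqueness characterization of $\mathscr{S}$ via its $L^2$-projection onto $\mathcal{V}_1\oplus\mathcal{V}$ at $t=0$, and the explicit rotation formulas \eqref{rotations} to match that projection with the data defining $\mathscr{S}(\rho^\theta(a))$. Your additional checks (preservation of the $L^1$ integrability condition and the extension to $t\geq 0$ by forward uniqueness) are correct and fill in details the paper leaves implicit.
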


\begin{proof}
Since $u_\e^{-\infty}$ is invariant by $\rho^{-\theta}$, the solution $\mathscr{S}(a)(\rho^{-\theta} (x),t)$ still converges to $u_\e^{-\infty}$ as $t \to -\infty$. By the uniqueness of such solutions, it suffices to check that the $L^2$-projections of $\mathscr{S}(a)(\rho^{-\theta} (x), 0) - u_\e^{-\infty}(x)$ and $\mathscr{S}(\rho^{\theta} (a))(x, 0) - u_\e^{-\infty}(x)$ onto $\mathcal{V}_1\oplus \mathcal{V}$ coincide. By the invariance of $u_\e^{-\infty}$ and $\varphi_1$, a direct computation using \eqref{rotations} shows that
	\begin{align*}
		 \int_{S^3} \left(\mathscr{S}(a)(\rho^{-\theta} (x), 0) - u_\e^{-\infty}(x)\right)\varphi_i(x)\,d\mu_g(x) & = \int_{S^3} \left( \mathscr{S}(a)(x,0) - u_\e^{-\infty}(x)\right) \varphi_i(x)\,d\mu_g(x) = a_i\\
		&= \int_{S^3} \left( \mathscr{S}(\rho^{\theta} (a))(x,0) - u_\e^{-\infty}(x)\right) \varphi_i(x)\,d\mu_g(x)\\
	\end{align*}
for $i=1,4,5$, while
	\begin{align*}
		& \int_{S^3} \left(\mathscr{S}(a)(\rho^{-\theta} (x), 0) - u_\e^{-\infty}(x)\right)\varphi_2(x)\,d\mu_g(x) \\
		& \qquad = (\cos\theta) \int_{S^3} \left( \mathscr{S}(a)(x,0) - u_\e^{-\infty}(x)\right) \varphi_2(x)\,d\mu_g(x) \\
		& \qquad \qquad - (\sin\theta) \int_{S^3} \left( \mathscr{S}(a)(x,0) - u_\e^{-\infty}(x)\right) \varphi_3(x)\,d\mu_g(x)\\
		& \qquad = (\cos\theta)a_2 - (\sin\theta)a_3 = \int_{S^3} \left(\mathscr{S}(\rho^{\theta} (a))(x, 0) - u_\e^{-\infty}(x)\right) \varphi_2(x)\,d\mu_g(x)
	\end{align*}
and similarly for the projection of $\left(\mathscr{S}(a)(\rho^{-\theta} (x), 0) - u_\e^{-\infty}(x)\right)$ onto $\varphi_3$. The proof for $\tau^\theta$ is similar.
\end{proof}

We also consider the isometry $s(x) = (x_3,x_4,x_1,x_2)$ acting on $S^3$. From the construction of $u_\e^{-\infty}$ (see \cite{CGGM} or \cite{H}), this isometry satisfies $u_\e^{-\infty}\circ s = -u_\e^{-\infty}$, hence it preserves the linearized Allen-Cahn operator at $u_\e^{-\infty}$. It follows that the linear operator $ \varphi \mapsto -\varphi\circ s$ defined on $\mathcal{V}_1 \oplus \mathcal{V}$ maps $\varphi_1$  to $(-\varphi_1)$ and, possibly after changing the basis of $\mathcal{V}$ by rotations, it maps $\varphi_2, \varphi_3, \varphi_4, \varphi_5$ into $(-\varphi_4)$, $(-\varphi_5)$, $(-\varphi_2)$ and $(-\varphi_3)$, respectively. Arguing as in the proof of Lemma \ref{lem:equivariance}, we see that
	\begin{equation}
	    -\mathscr{S}(a)(s(x), t) = \mathscr{S}(-s(a))(x,t),
	\end{equation}
where write $s(a)$ to mean $(a_1,s(a_2,a_3,a_4,a_5))=(a_1,a_4,a_5,a_2,a_3)$.

\begin{rmk}
The orbits of points $b \in B_\eta(0)$ by the action by $\rho^\theta$ and $\tau^\theta$ are the tori
	\[\{ a \in B_\eta(0) \mid a_1=b_1, a_2^2 + a_3^2 = b_2^2 + b_3^2, a_4^2 + a_5^2 = b_4^2 + b_5^2 \},\]
which degenerate to circles (or points) when $(b_2,b_3) = 0$ or $(b_4,b_5)=0$. Note that if the corresponding solution $\mathscr{S}(a)$ converges to a constant $c$ as $t \to +\infty$, for some point $a$ in this orbit, then the solutions corresponding to any point in the same orbit converge to $c$, as these two solutions to \eqref{PAC} differ by pre-composition with an isometry of the sphere.

For each small $\e>0$, we fix $r=r(\e) \in (0,\eta)$ depending continuously on $\e$. We will be particularly interested in the orbit
	\begin{equation} \label{orbit}
	    \mathcal{O}_\e = r\, \mathcal{O}, \quad \text{where} \ \mathcal{O} = \left\{p \in S^4 \mid p_1 = 0, p_2^2 + p_3^2 = \frac{1}{2} = p_4^2 + p_5^2\right\} \subset \R^5,
	  \end{equation}
which is invariant by the symmetry $s$. Note that this orbit may possibly change with $\e>0$, but only by a dilation. Geometrically, each direction in $\mathcal{O}$ corresponds to some deformation of the Clifford torus that decreases its area (this normalization will be useful later, when we study the forward limit of $\mathscr{S}(a)$ and the corresponding Brakke flow).
\end{rmk}

Finally, we consider reflections $r_v \colon S^3 \to S^3$ given by $r_v(x) = x - 2\langle x,v \rangle v$, for unit vectors $v \in \R^4$, and the corresponding action on (the four last coordinates of) $B_\eta(0)$. If $v_1=0=v_2$ (respectively if $v_3=0=v_4$) then $u_\e^{-\infty} \circ r_v = u_\e^{-\infty}$ and $\varphi_1 \circ r_v = \varphi_1$. Moreover, $\varphi \mapsto \varphi\circ r_v$ is a linear involution on $\mathcal{V}$ and it commutes with $\tau^\theta$ (respectively, with $\rho^\theta$), so it defines an operator on $\mathrm{span}\{\varphi_2,\varphi_3\}$ (respectively, on $\mathrm{span}\{\varphi_4,\varphi_5\}$). Using basic properties of linear involutions, one deduces:

\begin{lem} \label{lem:reflection}
Suppose $v \in S^3$ and $v_3=0=v_4$ (respectively, $v_1=0=v_2$). Denote by $E_v^{\pm}$ the $(\pm 1)$-eigenspaces of $r_v$ in the space $E$ spanned by $\varphi_2,\varphi_3$  (respectively, $\varphi_4,\varphi_5$). Then
\begin{enumerate}
	\item[(i)] $\dim E_v^{+} = 1 = \dim E_v^-$.
	\item[(ii)] For any $\varphi \in E$, there is such a unit vector $v$ satisfying $\varphi \circ r_v = \varphi$. 
	\item[(iii)] $E_{\rho^{\pi/2}(v)}^{\pm}  = E_v^{\mp}$ (respectively, $E_{\tau^{\pi/2}(v)}^{\pm}  = E_v^{\mp}$).
\end{enumerate} 
\end{lem}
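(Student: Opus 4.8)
The plan is to reduce everything to two–dimensional linear algebra in $\mathrm{O}(2)$, after recording how $r_v$ acts on the eigenspace $E$. Fix $v\in S^3$ with $v_3=v_4=0$ (the case $v_1=v_2=0$ is identical after interchanging the roles of $\rho^\theta,\tau^\theta$ and of $\{\varphi_2,\varphi_3\}$, $\{\varphi_4,\varphi_5\}$). For an isometry $g$ of $S^3$ fixing $u_\e^{-\infty}$, write $\pi(g)$ for the induced operator $\varphi\mapsto\varphi\circ g^{-1}$ on $\mathcal{V}_1\oplus\mathcal{V}$; this is an orthogonal representation, and since $r_v^2=\mathrm{id}$, $\pi(r_v)$ is an orthogonal involution. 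As noted before the statement, $\pi(r_v)$ preserves $E=\mathrm{span}\{\varphi_2,\varphi_3\}$ (it commutes with each $\pi(\tau^\theta)$, whose common fixed subspace within $\mathcal{V}$ is exactly $E$ by \eqref{rotations}), so $A:=\pi(r_v)|_E$ is a well–defined orthogonal involution of $E$.

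For (i) the point is to show that $A$ is a genuine reflection, not $\pm\mathrm{Id}_E$. Since $r_v$ and $\rho^\theta$ both act only on the $(x_1,x_2)$–plane, we have the relation $r_v\,\rho^\theta\,r_v^{-1}=\rho^{-\theta}$ as isometries of $S^3$; applying $\pi$ and restricting to $E$ gives $A\,R_\theta\,A^{-1}=R_{-\theta}$, where $R_\theta:=\pi(\rho^\theta)|_E$ is by \eqref{rotations} the rotation of $E$ by angle $\theta$. If $A$ were $\pm\mathrm{Id}_E$ this would force $R_\theta=R_{-\theta}$ for all $\theta$, which fails (take $\theta=\pi/2$). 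An orthogonal involution of a $2$–dimensional space that is not $\pm\mathrm{Id}$ is a reflection, so $\dim E_v^+=\dim E_v^-=1$.

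For (ii) and (iii) I would use the conjugation identity $g\,r_v\,g^{-1}=r_{gv}$, valid for any $g\in\mathrm{O}(4)$. With $g=\rho^\phi$, the vector $\rho^\phi(v)$ again has vanishing third and fourth coordinates, and $\pi(r_{\rho^\phi(v)})|_E=R_\phi\,A\,R_\phi^{-1}$; conjugating the reflection $A$ by the rotation $R_\phi$ yields the reflection of $E$ whose $(+1)$–eigenline is the $R_\phi$–image of that of $A$. As $\phi$ ranges over $\R$ this line sweeps out every line of $E$, so given $0\neq\varphi\in E$ one chooses $\phi$ with $R_\phi(E_v^+)=\R\varphi$, and then $v':=\rho^\phi(v)$ satisfies $\varphi\circ r_{v'}=\varphi$, proving (ii) (the case $\varphi=0$ being trivial). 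Taking $\phi=\pi/2$ rotates the $(+1)$–eigenline of $A$ by a right angle, i.e.\ interchanges it with the $(-1)$–eigenline, which is precisely the assertion $E_{\rho^{\pi/2}(v)}^\pm=E_v^\mp$ of (iii); the $\tau^{\pi/2}$ version follows identically from the analogous setup for $v_1=v_2=0$.

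The only place needing genuine care --- the \emph{main obstacle}, though a mild one --- is ruling out $A=\pm\mathrm{Id}_E$ in step (i). This is exactly where the normalization \eqref{rotations}, equivalently the faithfulness of the torus representation established in Lemma \ref{representation}, is used: it guarantees that $\{\rho^\theta\}$ acts on $E$ by honest nontrivial rotations, so that conjugation by $A$ can distinguish $R_\theta$ from $R_{-\theta}$. Once this is in place, (i)–(iii) are routine two–dimensional orthogonal geometry.
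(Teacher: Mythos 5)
Your proof is correct and follows essentially the same route as the paper's: both reduce to two-dimensional orthogonal linear algebra on $E$ and exploit the relation between $r_v$ and the rotations $\rho^\theta$ (you via the conjugation $A R_\theta A^{-1}=R_{-\theta}$, the paper via the composition $r_u\circ r_v=\rho^{-\pi/2}$), with the faithfulness of the rotation action from \eqref{rotations} supplying the contradiction in each case. Parts (ii) and (iii) are handled in both arguments by the identity $r_{\rho^\theta(v)}\circ\rho^\theta=\rho^\theta\circ r_v$ and the orthogonality of $E_v^{\pm}$.
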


\begin{proof}
We assume $v_3=0=v_4$, the other case being analogous. Denote by $E$ the space spanned by $\varphi_2$ and $\varphi_3$, the fixed space of $\tau^\theta$ in $\mathcal{V}$. The only possible eigenvalues of $\varphi \mapsto \varphi \circ r_v$ are $\pm 1$. We cannot have $E_v^{-}= \{0\}$ for all such $v$, otherwise any $\varphi \in E$ would satisfy $\varphi \circ r_v = \varphi$. This leads to a contradiction, as we can find such $u,v \in S^3$ so that $r_u \circ r_v = \rho^{-\pi/2}$, implying $\varphi \circ \rho^{-\pi/2} = \varphi$, which is impossible for nonzero $\varphi \in E$.

This proves that, for some $v \in S^3$ with $v_3=0=v_4$, the eigenspace $E_v^{-}$ is nontrivial. This conclusion then holds for every such $v$, since $r_{\rho^\theta (v)} \circ \rho^\theta = \rho^\theta \circ r_v$. Similarly, we see that $E_v^{+}$ is nontrivial, so these eigenspaces must be one-dimensional. Finally, one readily concludes (ii) and (iii) from the relation $r_{\rho^\theta (v)} \circ \rho^\theta = \rho^\theta \circ r_v$ and the fact that $E_v^+$ and $E_v^-$ are orthogonal.
\end{proof}

In order to study the backward and forward limits of the Brakke flow constructed from sequences of solutions to \eqref{PAC}, it will be useful to describe $E_v^\pm$ more explicitly. For each $\epsilon \in (0,\e_4)$, after possibly rotating $\varphi_2$ and $\varphi_3$, we can assume that $\varphi_2$ spans the $+1$ eigenspace of $r_{e_2}$, that is $E_{e_2}^{+}$. Then by Lemma \ref{lem:reflection}, the reflection $r_{e_1}$ reverses the sign of $\varphi_2$,  $r_{e_2}$ reverses the sign of $\varphi_3$, and $\varphi_3$ is preserved by $r_{e_1}$. The eigenfunctions $\varphi_2$ and $\varphi_3$ we get satisfy the following equations
\begin{align*}
&\varphi_2(x_1,x_2,x_3,x_4) = \varphi_2(x_1,-x_2,x_3,x_4), \\
&\varphi_2(x_1,x_2,x_3,x_4) = -\varphi_2(-x_1,x_2,x_3,x_4), \\
&\varphi_3(x_1,x_2,x_3,x_4) = -\varphi_3(x_1,-x_2,x_3,x_4), \\
&\varphi_3(x_1,x_2,x_3,x_4) = \varphi_3(-x_1,x_2,x_3,x_4).
\end{align*}

Moreover, we can assume that $\varphi_4 = \varphi_2 \circ s$ and $\varphi_5 = \varphi_3 \circ s$ to get a similar property with respect to reflections on $x_3$ and $x_4$. This choice is compatible with the action by rotations $\rho^{\theta}$ and $\tau^{\theta}$ on the space generated by these eigenfunctions, so that \eqref{rotations} remains valid. Furthermore, using this choice, we see that $\mathscr{S}$ is equivariant with respect to reflections that preserve $T_c$, namely
    \begin{equation}
        \mathscr{S}(a)(r_v(x),t) = \mathscr{S}(r_{v}(a))(x,t)
    \end{equation}
for any unit vector $v$ with $v_1=0=v_2$ or $v_3=0=v_4$. In fact, the computation is straightforward for $v=e_i$, $i=1,2,3,4$, and the general case follows from
\begin{align*}
\mathscr{S}(a)(r_{\rho^\theta(e_1)}(x),t) &= \mathscr{S}(a)(\rho^{\theta} \circ r_{e_1} \circ \rho^{-\theta}(x),t) \\
&= \mathscr{S}(\rho^{-\theta} \circ r_{e_1} \circ \rho^{\theta}(a))(x,t) \\
&= \mathscr{S}(r_{\rho^\theta(e_1)}(a))(x,t),
\end{align*}
and similarly for $\tau^\theta(e_3)$.

\begin{proof}[Proof of Lemma \ref{representation}]
Suppose that $\theta,\zeta \in \R$ satisfy the hypothesis in the statement of the lemma. By Propositions \ref{prop_parabolicflow} and \ref{prop2}, there exists $a=(a_1,a_2,a_3,a_4,a_5) \in \partial B_r(0)$ such that $\mathscr{S}(a)(\cdot,t)$ converges in $W^{1,2}$ to a ground state solution $w$ of \eqref{AC}, as $t \to +\infty$. Let $y \in S^3$ be such that $S^3 \cap \{w=0\} = y^\perp = \{x \in S^3 \mid \langle x,y \rangle =0\}$. We have either $y_1^2 + y_2^2 \neq 0$ or $y_3^2 + y_4^2 \neq 0$. Suppose the former holds; the other case can be argued similarly.

If we write $\varphi = a_1 \varphi_1 + \ldots + a_5 \varphi_5$, then $\varphi \circ (\rho^{-\theta} \circ \tau^{-\zeta}) = \varphi$, by assumption. By the uniqueness of solutions to \eqref{PAC} that converge backward to $u_\e^{-\infty}$, computing projections as in the proof of Lemma \ref{lem:equivariance} (without using the formulas in \eqref{rotations}), we see that $\mathscr{S}(a)$ and hence $w$ are invariant by $(\rho^{-\theta} \circ \tau^{-\zeta})$. In particular, the nodal set of $w$ is preserved by this composition of rotations, so $(\rho^{-\theta} \circ \tau^{-\zeta})(y) = \pm y$. Since $y_1^2+y_2^2 \neq 0$ and $\tau^{-\zeta}$ does not change $(y_1,y_2)$, this implies $\frac{\theta}{2\pi} \in \mathbb{Z}$.

Now consider the solution $\mathscr{S}(-s(a)) = - \mathscr{S}(a)\circ s$, which converges, as $t \to +\infty$, to a ground state solution with nodal set $s(y^\perp) = z^\perp$, where $z=s(y) \in S^3$ is such that $z_3^2 + z_4^2 \neq 0$. This solution is also invariant by $(\rho^{-\theta} \circ \tau^{-\zeta})$, by assumption, allowing us to conclude  $\frac{\zeta}{2\pi} \in \mathbb{Z}$.
\end{proof}

We conclude this subsection by noting that the equivariance of the solution map $\mathscr{S}$ implies the equivariance of the varifolds $V_{\e,\mathscr{S}(a)(\cdot,t)}$ with respect to $a$. This follows from the fact that, for any isometry $P$ of $S^3$ and any $C^1$ function $u$ defined on $S^3$, the pushforward $P_\#$ by the isometry $P$ satisfies
    \[P_{\#}V_{\e,u} = V_{\e, u \circ P^{-1}}.\]
This relation follows from a change of variables; we refer to \cite{H} for the proof. Noting also that $V_{\e,u} = V_{\e,-u}$, we obtain
    \begin{align*}
        V_{\e,\mathscr{S}(\rho^\theta(a))(\cdot,t)} & = (\rho^\theta)_{\#}V_{\e,\mathscr{S}(a)(\cdot,t)}, \qquad &
        V_{\e,\mathscr{S}(-s(a))(\cdot,t)} & = s_{\#}V_{\e,\mathscr{S}(a)(\cdot,t)},\\
          V_{\e,\mathscr{S}(\tau^\theta(a))(\cdot,t)} & = (\tau^\theta)_{\#}V_{\e,\mathscr{S}(a)(\cdot,t)}, \qquad & V_{\e,\mathscr{S}(r_v(a))(\cdot,t)} & = (r_v)_{\#}V_{\e,\mathscr{S}(a)(\cdot,t)},
    \end{align*}
for every $\theta \in \R$, and for every $v \in S^3$ such that $v_1=0=v_2$ or $v_3=0=v_4$. Observe that this also implies similar relations for the varifolds associated to any translated solution $\mathscr{S}(a)(x,t+t_0)$, for $t_0 \in \R$, and shows the invariance of the solutions and their limit as $t \to +\infty$ as well as the limits of these varifolds as $\e \downarrow 0$  whenever $a$ is fixed by the respective isometry, provided such limits exist.

\subsection{Backward limit}

As mentioned in Section \ref{sec:main}, the arguments described so far only guarantee that the limit Brakke flow extracted from a sequence of translated solutions of \eqref{PAC} converges back to \emph{some} Clifford torus in $S^3$ (provided these solutions join $u_\e^{-\infty}$ to some nonconstant critical point of $E_\e$). Our main goal in this subsection is to use the symmetries $r_v$ and $s$ and a continuity argument to show that any Brakke flow constructed from $\mathscr{S}(a)$ has $T_c$ as its backward limit whenever $a$ lies in the orbit $\mathcal{O}_\e$ described in \eqref{orbit}. This will conclude the proof of Theorem \ref{thm1}. \medskip

Let $v=\frac{e_1 - e_2}{\sqrt{2}}\in S^3$. Since $v = \rho^{-\frac{\pi}{4}} ({e_1})$, we have $r_{v} = \rho^{-\frac{\pi}{4}} \circ r_{e_1} \circ \rho^{\frac{\pi}{4}}$. Hence we know that $r_v$ is the reflection $(x_1,x_2,x_3,x_4) \mapsto (x_2,x_1,x_3,x_4)$. Let also $w = s(v) = \tau^{-\frac{\pi}{4}}(e_3)$, and note that $r_w(x_1,x_2,x_3,x_4) = (x_1,x_2,x_4,x_3)$.

For $b = r(\e)\cdot(0, \frac{1}{2},  \frac{1}{2}, -\frac{1}{2}, -\frac{1}{2}) \in \mathcal{O}_\e$, we have $r_{v}(b) = b$, $r_{w}(b) = b$, and $s(b) = -b$. Hence the solution $\mathscr{S}(b)$ is invariant by the reflections $r_{v}$ and $r_{w}$, and it changes its sign under the isometry $s$. Consequently, the limit Brakke flow and its backward limit are also invariant under the reflections $r_{v}, r_{w}$ and the isometry $s$.

\begin{lem}
There are only three Clifford tori in $S^3$ that are invariant under the reflections $r_{\frac{e_1 - e_2}{\sqrt{2}}}$ and $r_{\frac{e_3 - e_4}{\sqrt{2}}}$, and the isometry $s$. These are $T_c=\{x \in S^3 \mid x_1^2+x_2^2 = x_3^2 + x_4^2\}$, $T_+=\{x \in S^3 \mid x_1 x_2 = x_3 x_4\}$, and $T_-=\{x \in S^3 \mid x_1 x_2=-x_3 x_4\}$.
\end{lem}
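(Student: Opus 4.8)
The plan is to translate the statement into a linear-algebra problem via the (isometry-equivariant) correspondence between Clifford tori and pairs of axes. Recall that every Clifford torus is the image of $T_c$ under an isometry of $S^3$, i.e.\ under an element of $O(4)$; writing $P_1 = \mathrm{span}\{e_1,e_2\}$ and $P_2 = \mathrm{span}\{e_3,e_4\}$, one has $T_c = \{x\in S^3 : |\mathrm{proj}_{P_1}x|^2 = \tfrac12\}$, and more generally, for any $2$-plane $P\subset\R^4$, the set $\Sigma_P := \{x\in S^3 : |\mathrm{proj}_{P}x|^2 = \tfrac12\}$ (the points of $S^3$ at distance $\pi/4$ from the great circle $S^3\cap P$) is a Clifford torus, with $\Sigma_P = \Sigma_{P^\perp}$. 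The assignment $\{P,P^\perp\}\mapsto \Sigma_P$ is an $O(4)$-equivariant bijection from unordered pairs of mutually orthogonal $2$-planes onto Clifford tori (surjectivity is immediate, and injectivity follows since an isometry fixing a Clifford torus permutes the core circles of the two complementary solid tori, hence permutes the corresponding $2$-planes). Consequently a Clifford torus is invariant under $G := \langle r_v, r_w, s\rangle$, with $v=\tfrac{e_1-e_2}{\sqrt2}$ and $w=\tfrac{e_3-e_4}{\sqrt2}$, if and only if the associated pair $\{P,P^\perp\}$ is $G$-invariant.

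Next I would record the group action. Since $r_v$, $r_w$, $s$ act on $\R^4$ as the coordinate permutations $(x_1x_2)$, $(x_3x_4)$, $(x_1x_3)(x_2x_4)$, the group $G$ is dihedral of order $8$, and $\R^4$ splits $G$-orthogonally as $Q\oplus Q'$ with $Q = \mathrm{span}\{e_1-e_2,\,e_3-e_4\}$ the faithful $2$-dimensional irreducible representation of $D_4$ and $Q' = \mathrm{span}\{e_1+e_2,\,e_3+e_4\}$ (on which $r_v,r_w$ act trivially and $s$ acts as a reflection). This decomposition is multiplicity-free with irreducible summands, so the only $G$-invariant $2$-planes are $Q$ and $Q'$, and $Q^\perp = Q'$. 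A short computation ($|\mathrm{proj}_Q x|^2 = \tfrac12(x_1-x_2)^2 + \tfrac12(x_3-x_4)^2 = \tfrac12$ together with $|x|=1$) identifies $\Sigma_Q$ with $T_-$.

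Then I would do the case analysis on $H := \mathrm{Stab}_G(P) = \{\gamma\in G : \gamma P = P\}$, which has index $1$ or $2$ since $G$ acts on the two-element set $\{P,P^\perp\}$. If $H = G$ then $P\in\{Q,Q'\}$ and the pair is $\{Q,Q'\}$, giving $T_-$. If $[G:H]=2$, the nontrivial coset interchanges $P$ and $P^\perp$, and $H$ is one of the three index-$2$ subgroups of $D_4$: $H_1 = \langle r_v,r_w\rangle$, $H_2 = \langle s, r_vr_w\rangle$, $H_3 = \langle sr_v\rangle\cong\mathbb{Z}_4$. For $H_3$ one checks its only invariant $2$-planes are again $Q,Q'$ (it rotates $Q$ by a quarter turn), which the coset elements $r_v,r_w,s,sr_vr_w$ \emph{fix} rather than swap, so nothing new arises. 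For $H_2\cong\mathbb{Z}_2^2$ the four characters yield four mutually orthogonal eigenlines $L_1,\dots,L_4$, the invariant $2$-planes are the six spans $\langle L_i,L_j\rangle$, and the coset representative $r_v$ fixes two of the $L_i$ and swaps the other two; running through the six planes shows $r_v$ never maps one onto its orthogonal complement except in the cases $Q,Q'$ already handled. For $H_1\cong\mathbb{Z}_2^2$ the eigenspace decomposition has the $2$-dimensional piece $Q'$ and the $1$-dimensional pieces $\langle e_1-e_2\rangle$, $\langle e_3-e_4\rangle$, so the $H_1$-invariant $2$-planes other than $Q,Q'$ are exactly $\langle e_1-e_2\rangle\oplus\ell$ and $\langle e_3-e_4\rangle\oplus\ell$ for a line $\ell\subset Q'$; imposing that the coset representative $s$ send such a plane to its orthogonal complement gives, writing $\ell = \langle\cos\alpha\,(e_1+e_2)+\sin\alpha\,(e_3+e_4)\rangle$, the equation $\sin 2\alpha = 0$, hence $\ell\in\{\langle e_1+e_2\rangle,\langle e_3+e_4\rangle\}$. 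This produces exactly the pairs $\{P_1,P_2\}$ and $\{\mathrm{span}\{e_1-e_2,e_3+e_4\},\ \mathrm{span}\{e_1+e_2,e_3-e_4\}\}$, whose tori are $T_c$ and $T_+$ respectively (again by the projection computation, which for the second pair gives $(x_1-x_2)^2+(x_3+x_4)^2 = 1$, i.e.\ $x_1x_2 = x_3x_4$). Finally I would verify the converse: each of $T_c$, $T_+$, $T_-$ is cut out by an equation in $x_1x_2$ and $x_3x_4$ visibly invariant under $x_1\leftrightarrow x_2$, under $x_3\leftrightarrow x_4$, and under $(x_1,x_2)\leftrightarrow(x_3,x_4)$, and has orthogonal-complementary axes, hence is a $G$-invariant Clifford torus.

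The main obstacle is the $H_1$ case: unlike the other two index-$2$ subgroups, $H_1$ carries a full one-parameter family of invariant $2$-planes, and the $\sin 2\alpha = 0$ computation is what cuts this family down to the two planes yielding $T_c$ and $T_+$. A secondary point requiring care is bookkeeping with the correspondence --- one must work consistently with \emph{unordered pairs} of orthogonal $2$-planes (rather than single planes, or great circles) so as not to over- or under-count tori, and must confirm that the three invariant pairs found in Cases $H=G$ and $H=H_1$ are pairwise distinct and exhaust all possibilities.
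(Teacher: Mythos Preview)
Your argument is correct and complete; the structure via the stabilizer $H=\mathrm{Stab}_G(P)$ and the case split over the three index-$2$ subgroups of $D_4$ works as stated. One small wording issue: $Q'$ is \emph{not} irreducible (it splits further as the trivial line $\langle e_1+e_2+e_3+e_4\rangle$ plus the sign line $\langle e_1+e_2-e_3-e_4\rangle$), so the phrase ``multiplicity-free with irreducible summands'' is inaccurate. The conclusion you draw from it is still valid, because the full decomposition into irreducibles ($Q$, trivial, sign) is multiplicity-free, hence the only $G$-invariant $2$-planes are $Q$ and $Q'$. A second very minor point: in the $H_2$ case you write ``except in the cases $Q,Q'$ already handled'', but in fact $r_v$ fixes $Q$ and $Q'$ rather than swapping them, so that parenthetical is superfluous.

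The paper's own proof is shorter and follows a genuinely different route. Instead of the full $D_4$ case analysis, it uses a single geometric observation: a hyperplane reflection $r_y$ cannot interchange a $2$-plane $P$ with $P^\perp$ (indeed, a block anti-diagonal orthogonal matrix $\left(\begin{smallmatrix}0&A^T\\A&0\end{smallmatrix}\right)$ has eigenvalues $\pm1$ each with multiplicity $2$, never the signature $(+,+,+,-)$ of a reflection). Hence if $r_y$ preserves the torus $\Sigma_P$ it must preserve $P$ itself, forcing $y\in P\cup P^\perp$. Applying this to both $v$ and $w$ immediately gives $\langle r_v,r_w\rangle \subset \mathrm{Stab}_G(P)$, i.e.\ one lands directly in your cases $H=G$ or $H=H_1$, bypassing $H_2$ and $H_3$ entirely. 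Your approach buys a clean, purely representation-theoretic treatment that would generalize to other finite subgroups of $O(4)$; the paper's approach buys brevity by exploiting that two of the three generators are reflections.
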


We only sketch the proof here. Disregarding orientations, any Clifford torus is uniquely determined by the choice of a $2$-plane $P$ in $\R^4$, so that it is explicitly given by
    \[\{ x \in S^3 \mid \|P(x)\|^2 = \|P^\perp(x)\|^2\},\]
where we use the same notation $P$ for the orthogonal projection $\R^4 \to P$. One checks that if this is preserved by a reflection $r_y$, for some unit vector $y$, then $y\in P \cup P^\perp$.

This means that any torus preserved by the reflections $r_v$ and $r_w$ described above are such that $v,w \in P \cup P^\perp$. Unless $v,w \in P$ (in which case $P$ is spanned by $\{v,w\}$), the plane $P$ is determined by the choice of a unit vector orthogonal to $v$ and $w$. By writing their explicit equations and checking which of these tori are invariant by the isometry $s$, one concludes the proof.

\begin{prop}
Let $a \in \mathcal{O}_\e$, as defined in \eqref{orbit}, and let $u_\e$ by the solution of \eqref{PAC} given by the time translation of $\mathscr{S}(a)$ such that $u_\e(\cdot,0)$ has energy $5\pi$. Then any (subsequential) limit Brakke flow $\Sigma_t$ obtained from $u_\e$ as $\e \downarrow 0$ converges to $T_c$ as $t \to -\infty$.
\end{prop}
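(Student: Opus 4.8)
The strategy is to use the equivariance of the solution map $\mathscr{S}$ to reduce to a single, highly symmetric point of the orbit $\mathcal{O}_\e$, then cut the backward limit down to three explicit Clifford tori using the symmetries $r_v,r_w,s$, and finally eliminate the two spurious ones by a continuity argument in the size of the perturbation. Steps 1 and 2 are routine bookkeeping; Step 3 is the crux.

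\emph{Step 1 (reduction to one point).} Let $b = r(\e)\cdot(0,\tfrac12,\tfrac12,-\tfrac12,-\tfrac12)\in\mathcal{O}_\e$. Since $\mathcal{O}_\e$ is a single orbit of $G=\{\rho^\theta\circ\tau^\zeta:\theta,\zeta\in\R\}$, any $a\in\mathcal{O}_\e$ can be written $a=g(b)$ with $g\in G$, and Lemma \ref{lem:equivariance} gives $\mathscr{S}(a)(\cdot,t)=\mathscr{S}(b)(g^{-1}(\cdot),t)$. As $g$ is an isometry fixing $u_\e^{-\infty}$, this shows that $\mathscr{S}(a)$ satisfies the hypotheses of Proposition \ref{prop2} if and only if $\mathscr{S}(b)$ does (that $\mathscr{S}(b)$ does is checked in Step 2), that the two flows have identical energy profiles, so the time translations normalizing the energy to $5\pi$ agree and $u_\e^{(a)}(\cdot,t)=u_\e^{(b)}(g^{-1}(\cdot),t)$, and hence $V_{\e,u_\e^{(a)}(\cdot,t)}=g_\#V_{\e,u_\e^{(b)}(\cdot,t)}$. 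Passing to the limit $\e\downarrow0$ along a common subsequence yields $\Sigma_t^{(a)}=g_\#\Sigma_t^{(b)}$. Since $T_c$ is invariant under every element of $G$, it therefore suffices to prove that any subsequential backward limit of $\Sigma_t^{(b)}$ equals $T_c$.

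\emph{Step 2 (three candidates).} With $v=(e_1-e_2)/\sqrt2$ and $w=(e_3-e_4)/\sqrt2$ we have $r_v(b)=r_w(b)=b$ and $s(b)=-b$. Using the equivariance relations for $r_v,r_w$ and the identity $-\mathscr{S}(a)\circ s=\mathscr{S}(-s(a))$, together with the uniqueness of solutions of \eqref{PAC} converging backward to $u_\e^{-\infty}$, one obtains $\mathscr{S}(b)\circ r_v=\mathscr{S}(b)=\mathscr{S}(b)\circ r_w$ and $\mathscr{S}(b)\circ s=-\mathscr{S}(b)$. In particular $\mathscr{S}(b)(\cdot,t)$ cannot tend to a constant $\pm1$ as $t\to+\infty$ (such a limit would be its own negative, hence $0$), so Proposition \ref{prop2} applies to it. Combining these symmetries with $V_{\e,u}=V_{\e,-u}$ and $P_\#V_{\e,u}=V_{\e,u\circ P^{-1}}$, the varifolds $V_{\e,u_\e^{(b)}(\cdot,t)}$, hence $\Sigma_t^{(b)}$ and any subsequential backward limit $W$ of it, are invariant under $r_v$, $r_w$ and $s$. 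By Theorem \ref{graphical}, $\tfrac1\sigma W$ is a multiplicity one Clifford torus, so by the preceding Lemma $\tfrac1\sigma W\in\{T_c,T_+,T_-\}$.

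\emph{Step 3 (ruling out $T_+$ and $T_-$).} First, $T_+$ and $T_-$ are not in the $G$-orbit of $T_c$: an element of $G$ preserves the splitting $\R^4=\R^2_{x_1x_2}\oplus\R^2_{x_3x_4}$, whereas the $2$-planes defining $T_\pm$ meet both summands, so $g(T_\pm)\neq T_c$ for every $g\in G$. Hence, by Step 1, it is enough to exhibit one value of a continuous parameter — for solutions sharing the symmetries $r_v,r_w,s$ — at which the backward limit is forced to be $T_c$, and to propagate this by continuity. I would vary the size of the perturbation: for $\lambda\in(0,1]$ the solution $\mathscr{S}(\lambda b)$ still satisfies $\mathscr{S}(\lambda b)\circ r_v=\mathscr{S}(\lambda b)=\mathscr{S}(\lambda b)\circ r_w$ and $\mathscr{S}(\lambda b)\circ s=-\mathscr{S}(\lambda b)$, so by Step 2 the backward limit $W_\lambda$ of its time-normalized Brakke flow lies in the \emph{discrete} set $\{T_c,T_+,T_-\}$ for every $\lambda$; the uniqueness of the ancient flows of \cite{CM} together with the graphical — hence locally smooth — convergence near the backward limit furnished by Brakke's local regularity theorem should make $\lambda\mapsto\tfrac1\sigma W_\lambda$ continuous, therefore constant on $(0,1]$. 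It then remains to identify the common value as $T_c$: for fixed $\e$ the flow $\mathscr{S}(\lambda b)(\cdot,t)$ converges in $C^{2,\theta}$ to $u_\e^{-\infty}$ as $t\to-\infty$, at a rate governed by the top eigenvalue of the linearized operator and uniformly enough in $\e$ that the \emph{non-normalized} limit flow $\lim_{\e\downarrow0}V_{\e,\mathscr{S}(\lambda b)(\cdot,t)}$ has $T_c$ as its $t\to-\infty$ limit; since $E_\e(\mathscr{S}(\lambda b)(\cdot,t))$ converges, for each $t$, to a monotone function of $t$ crossing the value $2\sigma\cdot5\pi$ on a bounded set of times, the normalizing translation stays bounded as $\e\downarrow0$, so the normalized limit flow is a bounded time-shift of the non-normalized one, giving $\tfrac1\sigma W_\lambda=T_c$. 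I expect the main obstacle to be precisely this Step 3: making rigorous the continuity of $\lambda\mapsto W_\lambda$ (so as to use discreteness of $\{T_c,T_+,T_-\}$), and controlling the normalizing translation — showing it does not escape to $+\infty$ as $\e\downarrow0$, which is the phenomenon that obstructs concluding $\Sigma_t\to T_c$ for a general gradient flow and which the symmetries of the orbit $\mathcal{O}_\e$ are introduced to circumvent.
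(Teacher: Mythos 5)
Your Steps 1 and 2 coincide with the paper's argument: reduce to the symmetric point $b=r(\e)(0,\tfrac12,\tfrac12,-\tfrac12,-\tfrac12)$ by equivariance, and use the invariance under $r_v$, $r_w$ and $s$ together with Theorem \ref{graphical} to cut the backward limit down to $\{T_c,T_+,T_-\}$. Step 3, however, is where the proof has to live, and your version of it has two genuine gaps. First, the continuity of $\lambda\mapsto \tfrac1\sigma W_\lambda$ is not a consequence of the uniqueness results of \cite{CM} or of Brakke regularity: $W_\lambda$ is produced by a double limit (a subsequential limit $\e_j\downarrow 0$ of time-normalized flows, followed by $t\to-\infty$), the subsequence of $\e$'s may depend on $\lambda$, and nothing in the construction gives any modulus of continuity of the limit Brakke flow in the parameter $\lambda$. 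Worse, since all four unstable eigenvalues $\lambda_2^\e=\dots=\lambda_5^\e$ coincide by symmetry, the solutions $\mathscr{S}(\lambda b)$ for $\lambda\in(0,1]$ are plausibly just time translates of one another, in which case after the energy normalization $W_\lambda$ is constant in $\lambda$ for trivial reasons and the ``continuity plus discreteness'' step buys nothing: you are left exactly where you started, needing to identify the common value.

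Second, and decisively, the identification of that value as $T_c$ rests on the claim that the normalizing translation $t(\e)$ stays bounded as $\e\downarrow 0$, and this is false. For fixed $t\le 0$ the perturbation $\mathscr{S}(\lambda b)(\cdot,t)-u_\e^{-\infty}$ has size $O(r(\e))$ in $C^{2,\alpha}$, so the non-normalized varifolds $V_{\e,\mathscr{S}(\lambda b)(\cdot,t)}$ converge, for every fixed $t$, to $\sigma T_c$: the non-normalized limit flow is the \emph{static} Clifford torus for all time, with energy $2\sigma\cdot 2\pi^2$ at every $t$. The normalized flow has energy $2\sigma\cdot 5\pi<2\sigma\cdot 2\pi^2$ at $t=0$, so it cannot be a bounded time shift of the static flow; necessarily $t(\e)\to+\infty$. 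This divergence of the normalizing translation is precisely the phenomenon that makes the backward limit a priori only \emph{some} Clifford torus, and it is what the symmetry argument is meant to overcome --- it cannot be dismissed by an eigenvalue-rate heuristic. The paper closes the gap differently: it chooses a Grassmannian test function $f$ with $T_\pm(f)=-1<0<T_c(f)=1$, uses the continuity of $t\in[-\infty,0]\mapsto V_{\e_j,t}(f)$ (with value near $\sigma$ at $t=-\infty$ and near $-\sigma$ at suitable $t_j$) to produce intermediate times $s_j$ with $V_{\e_j,s_j}(f)=0$, and then shows by an area squeeze ($\|\Sigma_t\|\ge 2\pi^2$ for $t\le t_j$ from below, $\le 2\pi^2$ from above) that the flows re-centered at $s_j$ converge to a static Clifford torus $T'$ invariant under $r_v,r_w,s$ with $T'(f)=0$, a contradiction. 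If you want to salvage your approach, you would need to replace the boundedness claim by an argument of this intermediate-value type; as written, Step 3 does not go through.
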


\begin{proof}
We will prove the result for $q=r(\e)\cdot (0, \frac{1}{2},  \frac{1}{2}, -\frac{1}{2}, -\frac{1}{2})$. Since any $a \in \mathcal{O}_\e$ is obtained from $b$ by rotations that preserve $T_c$, the claimed result will then follow from Lemma \ref{lem:equivariance}.

For a $2$-varifold (or a surface) $V$ in $S^3$, we will write $V(f)$ to mean the integral of a continuous function $f$ on the Grassmannian $G_2(S^3)$ with respect to $V$ (or the multiplicity 1 varifold induced by such surface). Since $T_c$ and $T_{\pm}$ are distinct tori, we can pick a smooth real-valued function $f$ defined on $G_2(S^3)$ such that 
    \[-1 = T_+(f) = T_-(f) < 0 < T_c(f) = 1.\]

By Theorem \ref{graphical} and the previous Lemma, we know that $\Sigma_t$ converges, as $t \to -\infty$, to either $T_c$, $T_-$ or $T_+$. Suppose, by contradiction, that it converges to $T_+$. 
Pick a sequence $t_j \to -\infty$ such that $|\Sigma_{t_j}(f) - (-1)|<1/6$. We can find a sequence $\{\e_j\}$ of positive numbers such that $\e_j \downarrow 0$,
\begin{itemize}
    \item the varifold distance between $\frac{1}{\sigma}V_{\e_j,t_j}$ and $\frac{1}{\sigma}V_{t_j}$ is bounded above by $1/j$ (where $\Sigma_{t_j}$ is the weight measure of $\frac{1}{\sigma}V_{t_j}$), and
    \item $\left|\frac{1}{\sigma}V_{\e_j,t_j}(f)-(-1)\right|<1/3$.
\end{itemize}
Since the varifold $V_\e^{-\infty}$ associated to $u_\e^{-\infty}$ converges to $\sigma \cdot T_c$ as $\e \downarrow 0$, we can also assume
    \[\left|\frac{1}{\sigma} V_{\e_j}^{-\infty}(f) - 1\right| < \frac{1}{3}.\]

Note that $V_{\e_j,t}$ varies continuously in the varifold topology for $t \in [-\infty,0]$, where we write $V_{\e,-\infty} = V^{-\infty}_{\e}$. Thus, $t \in [-\infty,0] \mapsto \frac{1}{\sigma}V_{\e_j,t}(f)$ is a continuous function such that
    \[\frac{1}{\sigma}V_{\e_j,t_j}(f) < -\frac{2}{3} < \frac{2}{3} < \frac{1}{\sigma}V_{\e_j}^{-\infty}(f), \]
so there exists $s_j \in (-\infty,t_j) \subset (-\infty,0)$ such that $\frac{1}{\sigma}V_{\e_j,s_j}(f) = 0$.\medskip

\noindent\textbf{Claim.} After possibly passing to a (non relabeled) subsequence, $\{\frac{1}{\sigma}V_{\e_j,t+s_j}\}_{t}$ converges,as $j\to+\infty$, to a Brakke flow which is integral for a.e. $t$, and which has constant area $2\pi^2$, for every $t \leq 0$. Consequently, this Brakke flow is supported on some Clifford torus $T'$, and $V_{\e_j,s_j} \to T'$ as varifolds.\smallskip

To prove this claim, let $\tilde u_j(x,t) = u_{\e_j}(x,t+s_j)$. Then $\tilde u_j$ are solutions to \eqref{PAC} such that $\tilde u_j(\cdot,t) \to u_{\e_j}^{-\infty}$ as $t \to -\infty$, and that have the same limit as $u_{\e_j}(\cdot,t)$ when $t \to +\infty$. One checks that the sequence $\{\tilde u_j\}$ satisfies the hypotheses of Lemma \ref{bounds}, and
    \[E_{\e_j}(\tilde u_j(\cdot, t)) \leq E_{\e_j}(u_{\e_j}^{-\infty}), \quad \text{for all} \quad t \in \R.\]
Hence, after possibly passing to a subsequence, $\{\frac{1}{\sigma}V_{\e_j,\tilde u_j(\cdot, t)}=\frac{1}{\sigma}V_{\e_j,t+s_j}\}_{t \in \R}$ converges, as $j \to +\infty$, to a $2$-dimensional Brakke flow $\{\Gamma_t\}_t$  which is integral for almost every $t \in \R$, and such that
    \begin{align*}
        \|\Gamma_t\|(S^3) & = \lim_{j \to +\infty}\frac{1}{2\sigma}E_{\e_j}(\tilde u_j(\cdot,t)) \leq \lim_{j \to +\infty}\frac{1}{2\sigma} E_{\e_j}(u_{\e_j}^{-\infty}) = 2\pi^2
    \end{align*}
for all $t \in \R$. Furthermore,
    \begin{align*}
        2\pi^2 &= \|T_+\|(S^3) = \lim_{j \to +\infty} \frac{1}{2\sigma}E_{\e_j}(u_{\e_j}(\cdot, t_j)) \leq \lim_{j \to +\infty} \frac{1}{2\sigma}E_{\e_j}(u_{\e_j}(\cdot, s_j)) \\
        & \leq \lim_{j \to +\infty} \frac{1}{2\sigma} E_{\e_j}(\tilde u_j(\cdot, t)) = \lim_{j \to +\infty} \frac{1}{\sigma}\|V_{\e_j,\tilde u_j(\cdot,t)}\|(S^3) = \|\Gamma_t\|(S^3)
    \end{align*}
for all $t\leq 0$. This implies $\|\Gamma_t\|(S^3) = 2\pi^2$ for every $t\leq 0$. We conclude that $\{\Gamma_t\}_{t \in \R}$ must be supported on a Clifford torus $T'$. Since $\Gamma_t$ is integral for a.e. $t$, this proves the claim.\medskip

We can now obtain a contradiction by recalling that the choice of $a$ implies that $\mathscr{S}(a)$ is invariant by the reflections with respect to $v=\frac{e_1-e_2}{\sqrt{2}}$ and $w = \frac{e_3-e_4}{\sqrt{2}}$, as well as the isometry $s$. Consequently, the translated solutions $u_{\e_j}$ and $\tilde u_j$, the limit Brakke flows $\Sigma_t$ and $\Gamma_t$, and the torus $T'$ given by the Claim above are invariant by the same isometries. But this contradicts
    \begin{align*}
        T'(f) & = \lim_{j \to +\infty}V_{\e_j,s_j}(f) = 0 \notin \{T_c(f),T_{\pm}(f)\},
    \end{align*}
where we used $\frac{1}{\sigma} V_{\e_j,s_j} \to \Gamma_0 = T'$. Similarly, $\Sigma_{t}$ does not converge to $T_-$ as $t \to -\infty$.
\end{proof}

This Proposition establishes the existence of solutions of \eqref{PAC}, for small $\e>0$, such that any limit Brakke flow converges back in time to the (same) Clifford torus $T_c$, thus proving Theorem \ref{thm1}.

\subsection{Forward limit}

We are in position to study the forward limit of solutions $\mathscr{S}(a)$, for $a$ in the orbit $\mathcal{O}_\e$ described in \eqref{orbit}, and of their limit Brakke flow. The key observation is that the invariance by the isometry $s$, which holds for some points in $\mathcal{O}_\e$, allows one to obtain information about the nodal set of the limit critical point. By arguing as in the previous subsection, we can then describe the convergence and the forward limit of the Brakke flows constructed as limits of these solutions.

For the next result, we recall the small positive $\e_4>0$ given by Lemma \ref{equatorial}, which ensures the convergence of certain solutions of \ref{PAC} to ground states.

\begin{prop} \label{forward}
Let $\e \in (0,\e_4)$. For each $p \in \mathcal{O}$, there is $y_p \in S^3$ (possibly depending on $\e$) such that
\begin{enumerate}
    \item[(i)] Let $u_p$ be the unique ground state solution of \eqref{AC} such that $\{u_p = 0\}$ is the equatorial sphere $S^3 \cap y_p^\perp$, and $u_p$ is positive in $\{x \in S^3 \mid \langle x, y_p\rangle>0\}$. Then
        \[\lim_{t \to +\infty}\|\mathscr{S}(r(\e)p)(\cdot,t)-u_p\|_{W^{1,2}(S^3)}=0.\]
    \item[(ii)] The map $p \mapsto y_p$ is injective, continuous, and odd.
\end{enumerate}
Moreover, we can find a sequence $\e_j \downarrow 0$ such that, for every $p \in \mathcal{O}$, the vector $y_p$ is independent of $j$ and, if $\Sigma_t$ is the Brakke flow in $S^3$ obtained from (a time-translation of) $\mathscr{S}(r(\e)p)$, as described in Section \ref{sec:main}, then $\Sigma_t$ converges to $y_p^\perp \cap S^3$, as $t \to +\infty$, in the varifold sense.
\end{prop}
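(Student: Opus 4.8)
The plan is to combine the connectedness argument of Sections~\ref{sec:gradient}--\ref{sec:main} with the equivariance of the solution map $\mathscr{S}$ and, for the Brakke flow, a sliding argument patterned on the backward-limit proof above. Write $p_\ast=(0,\tfrac12,\tfrac12,-\tfrac12,-\tfrac12)\in\mathcal{O}$, so that $r(\e)p_\ast=b$ is the point treated in the previous subsection, with $r_v(p_\ast)=p_\ast=r_w(p_\ast)$ for $v=\tfrac{e_1-e_2}{\sqrt2}$, $w=\tfrac{e_3-e_4}{\sqrt2}$, and $s(p_\ast)=-p_\ast$. \emph{First} I would prove that for \emph{every} $p\in\mathcal{O}$ the flow $\mathscr{S}(r(\e)p)$ does not converge to a constant as $t\to+\infty$. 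As in the proof of Proposition~\ref{prop_parabolicflow}, the sets $U_\pm\cap\mathcal{O}=\{p\in\mathcal{O}:\mathscr{S}(r(\e)p)(\cdot,t)\to\pm1\}$ are open (pull back the open sets $U_\pm$ under the $C^{2,\alpha}$-continuous map $p\mapsto\mathscr{S}(r(\e)p)(\cdot,0)$) and disjoint; the identity $-\mathscr{S}(a)(s(x),t)=\mathscr{S}(-s(a))(x,t)$ and the $(-s)$-invariance of $\mathcal{O}$ show that $U_+\cap\mathcal{O}\neq\emptyset$ iff $U_-\cap\mathcal{O}\neq\emptyset$, so by connectedness of $\mathcal{O}$ there is $\bar p\in\mathcal{O}$ outside $U_+\cup U_-$; since $\rho^\theta,\tau^\zeta$ act transitively on $\mathcal{O}$ and (Lemma~\ref{lem:equivariance}) the flows $\mathscr{S}(r(\e)p)$ differ only by pre-composition with isometries of $S^3$, no $\mathscr{S}(r(\e)p)$ converges to $\pm1$. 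Each $\mathscr{S}(r(\e)p)$ then satisfies the hypotheses of Proposition~\ref{prop2} (strict energy decay holds because these are non-constant gradient flows), hence converges in $W^{1,2}$ to a ground state $u_p$; letting $y_p\in S^3$ be the unit vector with $\{u_p=0\}=S^3\cap y_p^\perp$ and $u_p>0$ on $\{\langle\cdot,y_p\rangle>0\}$ proves (i).

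\emph{Next}, for (ii), I would record the equivariance of $p\mapsto y_p$. Combining Lemma~\ref{lem:equivariance}, the $s$-equivariance of $\mathscr{S}$, and the fact that $-\mathrm{id}_{S^3}=\rho^\pi\circ\tau^\pi$ fixes $u_\e^{-\infty}$, one obtains $u_{\rho^\theta\tau^\zeta p}=u_p\circ(\rho^{-\theta}\tau^{-\zeta})$, $u_{-s(p)}=-\,u_p\circ s$, and $u_{-p}=u_p\circ(-\mathrm{id})$; comparing nodal sets and signs gives $y_{\rho^\theta\tau^\zeta p}=\rho^\theta\tau^\zeta(y_p)$, $y_{-s(p)}=-s(y_p)$ and $y_{-p}=-y_p$, so $p\mapsto y_p$ is odd. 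Writing $p=(\rho^\theta\tau^\zeta)(p_\ast)$, continuity of $p\mapsto y_p=(\rho^\theta\tau^\zeta)(y_{p_\ast})$ follows from the fact that $\mathrm{SO}(2)\times\mathrm{SO}(2)$ acts freely and transitively on $\mathcal{O}$ (each circle has fixed nonzero radius), so $\mathcal{O}\cong T^2$; injectivity follows because the $s$-anti-invariance of $u_{p_\ast}$ forces $s(y_{p_\ast})\perp y_{p_\ast}$ to be false, hence $y_{p_\ast}$ has nonzero projection on each of the planes $\{x_3=x_4=0\}$, $\{x_1=x_2=0\}$, so its stabilizer under $\{\rho^\theta\tau^\zeta\}$ is exactly $2\pi\mathbb{Z}^2$, the same as that of $p_\ast$.

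\emph{Then} I would set up the subsequence and reduce to $p_\ast$. Because $r_v(b)=r_w(b)=b$ and $s(b)=-b$, the equivariance of $\mathscr{S}$ shows $\mathscr{S}(b)$ is $r_v,r_w$-invariant and $s$-anti-invariant, hence so is $u_{p_\ast}$ and its nodal set; a direct computation with $r_v(x)=(x_2,x_1,x_3,x_4)$, $r_w(x)=(x_1,x_2,x_4,x_3)$, $s(x)=(x_3,x_4,x_1,x_2)$ shows the only $y\in S^3$ with $y^\perp$ invariant under all three are $y\in\{\pm\tfrac12(1,1,1,1),\ \pm\tfrac12(1,1,-1,-1)\}$, so $S^3\cap y_{p_\ast}^\perp$ is one of exactly two equatorial spheres $E_1,E_2$. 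Thus $y_{p_\ast}$ takes finitely many values, and from the subsequence $\e_j\downarrow0$ fixed in Section~\ref{sec:main} I pass to a further subsequence along which $y_{p_\ast}$ — hence, by the equivariance above, every $y_p$, $p\in\mathcal{O}$ — is constant; since all $\mathscr{S}(r(\e)p)$ differ by isometries of $S^3$, the energy $E_\e(\mathscr{S}(r(\e)p)(\cdot,t))$ is independent of $p$, so the time-translation normalizing the energy at $t=0$ to $5\pi$ is common to all $p$, and it suffices to treat $p=p_\ast$ and push forward by $\rho^\theta\tau^\zeta$. For $p=p_\ast$: let $\Sigma_t$ be the resulting Brakke flow, which inherits the $r_v,r_w$-invariance and $s$-anti-invariance; by Theorem~\ref{graphical} every subsequential limit of $\Sigma_t$ as $t\to+\infty$ is a multiplicity-one equatorial sphere, and by the symmetry it lies in $\{E_1,E_2\}$, one of which is $S^3\cap y_{p_\ast}^\perp$. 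Suppose, for contradiction, that along some $t_i\uparrow\infty$, $\Sigma_{t_i}$ converges to the \emph{other} sphere $E'$. Pick $f\in C^\infty(G_2(S^3))$ with $E'(f)<0<(S^3\cap y_{p_\ast}^\perp)(f)$. For fixed $j$, $\mathscr{S}(b)(\cdot,t)\to u_{p_\ast}$ in $W^{1,2}$ (indeed in $C^2$, using the uniform $C^3$ bounds from Lemma~\ref{bounds} and parabolic estimates), so $\tfrac1\sigma V_{\e_j,t}(f)\to\tfrac1\sigma V_{\e_j,u_{p_\ast}}(f)$, which for $j$ large is close to $(S^3\cap y_{p_\ast}^\perp)(f)>0$, while $\Sigma_{t_i}(f)\to E'(f)<0$; a diagonal choice (keeping the varifold distance between $\tfrac1\sigma V_{\e_j,t_i}$ and $\tfrac1\sigma V_{t_i}$ small) and the intermediate value theorem for $t\mapsto\tfrac1\sigma V_{\e_j,t}(f)$ then produce $s_j\uparrow\infty$ with $\tfrac1\sigma V_{\e_j,s_j}(f)=0$ and $s_j>t_{i(j)}$.

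\emph{Finally}, I would close the argument exactly as in the backward case: $\tilde u_j=u_{\e_j}(\cdot,\cdot+s_j)$ satisfies the hypotheses of Lemma~\ref{bounds}, so $\{\tfrac1\sigma V_{\e_j,t+s_j}\}_t$ subconverges to a Brakke flow $\Gamma_t$, integral for a.e.\ $t$; from $\tfrac1{2\sigma}E_{\e_j}(u_{\e_j}(\cdot,\tau))>\tfrac1{2\sigma}E_{\e_j}(u_{\e_j}^{+\infty})\to4\pi$, from $\tfrac1{2\sigma}E_{\e_j}(u_{\e_j}(\cdot,t_i))\to4\pi$ (since $\Sigma_{t_i}\to E'$ has area $4\pi$), and from monotonicity of the energy (using $t+s_j\ge s_j>t_{i(j)}$), one gets $\|\Gamma_t\|(S^3)=4\pi$ for all $t\ge0$; hence $\Gamma_t$, $t\ge0$, is static, supported on a stationary integral varifold of area $4\pi$ with vanishing $\mathbb{Z}_2$-boundary, i.e.\ a multiplicity-one equatorial sphere, which is moreover $r_v,r_w,s$-invariant, so equals $E_1$ or $E_2$; but $\Gamma_0(f)=\lim_j\tfrac1\sigma V_{\e_j,s_j}(f)=0\neq E_1(f),E_2(f)$, a contradiction. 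Therefore the only subsequential limit of $\Sigma_t$ as $t\to+\infty$ is $S^3\cap y_{p_\ast}^\perp$, and since the space of multiplicity-one equatorial spheres is compact, $\Sigma_t\to S^3\cap y_{p_\ast}^\perp$; pushing forward under $\rho^\theta\tau^\zeta$ gives $\Sigma_t\to S^3\cap y_p^\perp$ for every $p\in\mathcal{O}$. I expect the main obstacle to be precisely this last step: arranging the diagonal subsequence so that the separating functional vanishes at times $s_j\to+\infty$, and then pinning $\|\Gamma_t\|(S^3)$ to exactly $4\pi$ on $t\ge0$ — this relies on the non-increase of $\|\Sigma_t\|(S^3)$, the asymptotic lower bound $\tfrac1{2\sigma}E_\e(u_\e^{+\infty})\to4\pi$, and the area of the forward subsequential limits, and the energy comparisons must be ordered carefully around $t+s_j\ge s_j>t_{i(j)}$.
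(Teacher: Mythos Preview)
Your proposal is correct and follows essentially the same route as the paper: the non-convergence to $\pm 1$ via the $(-s)$-symmetry and transitivity of the torus action is the content of the paper's Lemma~\ref{lem:nonconst}; the equivariance computation for (ii) and the reduction to the base point $p_\ast=(0,\tfrac12,\tfrac12,-\tfrac12,-\tfrac12)$ match the paper's argument; and your detailed sliding/intermediate-value argument for the forward Brakke limit is exactly what the paper has in mind when it says ``use the continuity argument employed to characterize the limit of $\Sigma_t$ as $t\to -\infty$'' --- you have simply written out what the paper leaves implicit. Two cosmetic remarks: your connectedness step in (i) is redundant once you invoke transitivity (either $\mathcal{O}\subset U_+$, $\mathcal{O}\subset U_-$, or $\mathcal{O}\cap(U_+\cup U_-)=\emptyset$, and the first two are excluded by $-s$), and the phrase ``$s(y_{p_\ast})\perp y_{p_\ast}$ to be false'' is more directly stated as $s(y_{p_\ast})=-y_{p_\ast}$, which immediately gives $(y_3,y_4)=-(y_1,y_2)$ and hence both planar projections nonzero.
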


First, we note that that solutions to $\eqref{PAC}$ that correspond to initial conditions in $\mathcal{O}_\e$ always join $u^{-\infty}_\e$ to ground states.

\begin{lem} \label{lem:nonconst}
Let $\e \in (0,\e_4)$, as given by Lemma \ref{equatorial}. For any $a \in \mathcal{O}_\e$, the solutions $\mathscr{S}(a)(\cdot, t)$ do not subsequentially converge (in the $W^{1,2}$ or H\"older norm) to the constants $\pm 1$ as $t \to +\infty$. Consequently, $\mathscr{S}(a)(\cdot,t)$ converges, in $W^{1,2}$ norm, to a ground state solution.
\end{lem}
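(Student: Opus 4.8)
The plan is to reduce the first (non-convergence) assertion to a single, maximally symmetric point of the orbit $\mathcal{O}_\e$, rule out convergence there via the anti-equivariance of $\mathscr{S}$ under the isometry $s$, and then deduce the ``consequently'' clause from Proposition \ref{prop2}.

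\textit{Step 1: reduction to one point.} First I would record that $\mathcal{O}_\e = r(\e)\,\mathcal{O}$ is exactly the orbit of $b := r(\e)\,(0,\tfrac12,\tfrac12,-\tfrac12,-\tfrac12)$ under the rotations $\rho^\theta$ and $\tau^\zeta$, since rotating the pairs $(a_2,a_3)$ and $(a_4,a_5)$ independently sweeps out all of $\{a_1 = 0,\ a_2^2 + a_3^2 = a_4^2 + a_5^2 = r(\e)^2/2\}$. Hence any $a \in \mathcal{O}_\e$ can be written $a = (\rho^\theta\circ\tau^\zeta)(b)$ for suitable $\theta,\zeta \in \R$, and applying the equivariance Lemma \ref{lem:equivariance} once for $\tau^\zeta$ and once for $\rho^\theta$ gives $\mathscr{S}(a)(x,t) = \mathscr{S}(b)\big((\tau^{-\zeta}\circ\rho^{-\theta})(x),t\big)$. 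Since $\tau^{-\zeta}\circ\rho^{-\theta}$ is an isometry of $S^3$, pre-composition by it is a linear isomorphism of $W^{1,2}(S^3)$ (and of every Hölder space) fixing the constants $\pm 1$; therefore $\mathscr{S}(a)(\cdot,t_k)$ converges to $\pm 1$ along some $t_k \to +\infty$ if and only if $\mathscr{S}(b)(\cdot,t_k)$ does. So it is enough to treat $a = b$.

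\textit{Step 2: the anti-symmetry argument.} Next I would observe that $s(b) = -b$, hence $-s(b) = b$, so the anti-equivariance identity $-\mathscr{S}(a)(s(x),t) = \mathscr{S}(-s(a))(x,t)$ specializes to $\mathscr{S}(b)(s(x),t) = -\mathscr{S}(b)(x,t)$ for all $(x,t) \in S^3\times\R$. Suppose, for a contradiction, that $\mathscr{S}(b)(\cdot,t_k) \to c$ for some $t_k \to +\infty$ and some constant $c \in \{1,-1\}$, in $W^{1,2}$ or in a Hölder norm. Pre-composition with the smooth isometry $s$ preserves the mode of convergence, so on the one hand $\mathscr{S}(b)(s(\cdot),t_k) \to c\circ s = c$, and on the other hand $\mathscr{S}(b)(s(\cdot),t_k) = -\mathscr{S}(b)(\cdot,t_k) \to -c$. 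By uniqueness of limits $c = -c$, forcing $c = 0$ and contradicting $c = \pm 1$. This proves the first assertion of the lemma.

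\textit{Step 3: conclusion via Proposition \ref{prop2}.} Finally, for $a \in \mathcal{O}_\e \subset \partial B_{r(\e)}(0) \subset B_\eta(0)$, the solution $\mathscr{S}(a)$ is eternal --- defined for all $t \ge 0$ by the long-time existence used in the proof of Proposition \ref{prop_parabolicflow}, with $\sup_{S^3\times\R}|\mathscr{S}(a)| < 1$ --- it converges to $u_\e^{-\infty}$ in $W^{1,2}(S^3)$ as $t \to -\infty$, and $E_\e(\mathscr{S}(a)(\cdot,t))$ is strictly decreasing in $t$: since $a \ne 0$, the $L^2$-projection of $\mathscr{S}(a)(\cdot,0) - u_\e^{-\infty}$ onto $\mathrm{span}\{\varphi_1,\dots,\varphi_5\}$ equals $\sum_{j=1}^5 a_j\varphi_j \ne 0$, so $\mathscr{S}(a)$ is not the stationary solution $u_\e^{-\infty}$ and, by backward uniqueness for \eqref{PAC}, it is never a critical point, whence $\tfrac{d}{dt}E_\e(\mathscr{S}(a)(\cdot,t)) = -\int_{S^3}\e\,|\partial_t\mathscr{S}(a)(\cdot,t)|^2 < 0$ for all $t$. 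Combined with Step 2, $\mathscr{S}(a)$ satisfies hypotheses (i)--(iii) of Proposition \ref{prop2}, which then provides a ground state solution $u_\e^{+\infty}$ of \eqref{AC} with $\|\mathscr{S}(a)(\cdot,t) - u_\e^{+\infty}\|_{W^{1,2}(S^3)} \to 0$ as $t \to +\infty$.

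\textit{Expected main obstacle.} The only step that is not routine bookkeeping with the symmetry relations already set up is the strict monotonicity of the energy along $\mathscr{S}(a)$, i.e. ruling out that the flow rests at a critical point over a time interval; I expect to settle this with the nonzero $t=0$ projection above together with backward uniqueness for the semilinear parabolic equation \eqref{PAC}. (One should also check at the outset that $r(\e) < \eta_\e$, so that the entire orbit $\mathcal{O}_\e$ lies in the domain $B_\eta(0)$ of the solution map $\mathscr{S}$; this was arranged in the Remark preceding the statement.)
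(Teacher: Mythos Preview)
Your proof is correct and follows essentially the same approach as the paper: the key step in both is the anti-equivariance of $\mathscr{S}$ under the isometry $s$, which forces any constant subsequential limit $c$ to equal $-c$. The only organizational difference is that you first reduce to the single $s$-antisymmetric point $b$ via the orbit structure, whereas the paper works directly with an arbitrary $a \in \mathcal{O}_\e$, observing that $-s(a)$ lies in the same $(\rho^\theta,\tau^\zeta)$-orbit as $a$ so that $\mathscr{S}(a)$ and $\mathscr{S}(-s(a))$ differ by an isometry; the contradiction then follows in one stroke without the reduction. Your Step~3 is more careful than the paper's (which simply invokes Proposition~\ref{prop2}), and your use of backward uniqueness to secure strict monotonicity is sound, if slightly heavier than necessary.
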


\begin{proof}
Suppose some $\mathscr{S}(a)(\cdot,t)$ converges to the constant critical point $c \in \{-1,+1\}$ along a subsequence $t_j \to +\infty$. Then $\mathscr{S}(a)(s(x),t) = -\mathscr{S}(-s(a))(x,t)$ is a solution of \eqref{PAC} that converges subsequentially to $c$ as $t=t_j \to + \infty$. Consequently, $\mathscr{S}(-s(a))$ also solves \eqref{PAC} and converges to $-c$ as $t_j \to +\infty$. On the other hand, $-s(a) \in \mathcal{O}_\e$, so we obtain a contradiction, as $\mathscr{S}(a)$ and $\mathscr{S}(-s(a))$ differ by an isometry by Lemma \ref{lem:equivariance} (see also the Remark above about the orbits of $\rho^\theta$ and $\tau^\theta$). The last conclusion follows from Proposition \ref{prop2}.
\end{proof}

\begin{proof}[Proof of Proposition \ref{forward}]
Let $\e \in (0,\e_4)$. For every $p \in \mathcal{O}$, we let $u_{p,\e}$ be the limit of $\mathscr{S}(r(\e)p)$, which is a ground state solution, by Lemma \ref{lem:nonconst}. This means we can find $y_{p,\e} \in S^3$ such that $\{u_{p,\e}>0\} = \{x \in S^3 \mid \langle x, y_{p,\e}\rangle>0\}$, and $u_{p,\e}$ vanishes precisely at $y_{p,\e}^{\perp}\cap S^3$. This proves (i). By Lemma \ref{lem:equivariance}, we have
    \[u_{(\rho^{\theta_1} \circ \tau^{\theta_2})(p),\e} = u_{p,\e} \circ \rho^{-\theta_1} \circ \tau^{-\theta_2},\]
for every $\theta_1,\theta_2 \in \R$. This proves the continuity of $p \mapsto y_{p,\e}, u_{p,\e}$.

Again, we pick $q = (0,\frac{1}{2},\frac{1}{2},-\frac{1}{2},-\frac{1}{2}) \in \mathcal{O}$, so that $s(q)=-q$. We also consider the eigenfunction $\varphi = \frac{r(\e)}{2}(\varphi_2 + \varphi_3 - \varphi_4 - \varphi_5)$. As noted in the previous subsection, $v=\frac{e_1-e_2}{\sqrt{2}}$ and $w=s(v) = \frac{e_3-e_4}{\sqrt{2}}$ are such that $\varphi \circ r_v = \varphi = \varphi \circ r_w$. The equivariance of the solution map implies $u_{q,\e}\circ s = -u_{q,\e}$ and thus $y_{q,\e}^\perp$ is invariant by the isometry $s$. This shows that $s(y_{q,\e}) = \pm y_{q,\e}$ and $y_1^2 + y_2^2 = y_3^2 + y_4^2 \neq 0$, where $y_{q,\e}=(y_1,y_2,y_3,y_4)$. We also have $u_{q,\e} \circ r_v = u_{q,\e} = u_{q,\e} \circ r_w$, and ${y_{q,\e}}^\perp$ is invariant by both $r_v$ and $r_w$.

We can now prove the injectivity and oddness. Observe that if $\theta_1, \theta_2 \in \R$, and if the rotation $R=(\rho^{-\theta_1} \circ \tau^{-\theta_2})$ preserves $y_{q,\e}^\perp$, then $R(y_{q,\e}) = \pm y_{q,\e}$. Since $(y_1,y_2)$ and $(y_3,y_4)$ are nonzero, this can only happen if $R$ is either the identity or the antipodal map in $S^3$. Now any $p \in \mathcal{O}$ distinct from $q$ is of the form $p=(\rho^{\theta_1} \circ \tau^{\theta_2})(q)$, for some $\theta_1,\theta_2$ not both in $2\pi \mathbb{Z}$, which implies, by Lemma \ref{lem:equivariance}, that $u_{p,\e}$ is either $- u_{q,\e}$, in the case $p=-q$ (when $\theta_1$ and $\theta_2$ are odd multiples of $\pi$), or a critical point of $E_\e$ having a different equator as its nodal set. This shows that this limit is injective and odd with respect to $p$, and concludes the proof of (ii).

To prove the last statement, note that there are precisely two equators that are invariant by $r_v$ and $s$, namely
    \[\{x \in S^3 \mid x_1 + x_2 + x_3 + x_4 = 0\} \quad \text{and} \quad \{x \in S^3 \mid x_1 + x_2 -x_3 - x_4 =0\}.\]
Hence, for every $\e \in (0,\e_4)$, we have $y_{q,\e} \in \left\{\pm \left( \frac{1}{2}, \frac{1}{2}, \frac{1}{2}, \frac{1}{2} \right),\pm \left( \frac{1}{2}, \frac{1}{2}, -\frac{1}{2}, -\frac{1}{2} \right)\right\}$. This means we can find a sequence $\e_j \downarrow 0$ such that $y_{q,\e_j}=y$ is constant, and $\mathscr{S}(r(\e_j)q)$ converge to ground states $u_{q,\e_j}$ having the same nodal set and fixed signs on the hemispheres bounded by this equatorial sphere. By passing to a further subsequence, we may assume that the appropriate time-translation of $\mathscr{S}(r(\e_j)q)$ gives rise to a Brakke flow $\Sigma_t$ which satifies the conditions described in Section \ref{sec:main}. We can now use the continuity argument employed to characterize the limit of $\Sigma_t$ as $t\to -\infty$ to conclude that $\Sigma_t \to S^3 \cap y^\perp$ as $t \to +\infty$, in the varifold sense. The conclusion can be extended to every direction in $\mathcal{O}$ using the equivariance of the solution map and of the induced varifolds.
\end{proof}

Since the sequence $\e_j \downarrow 0$ in Proposition \ref{forward} can be extracted from any sequence of parameters converging to zero, this finishes the proof of Theorem \ref{thm2}.

\subsection{Final remarks}

We conclude with a brief remark concerning the \emph{forward limits, sweepouts, and gradient flows of $E_\e$} in $S^3$. The topological argument given in the proof of Proposition \ref{prop_parabolicflow} shows that that for every small $\e>0$ and for every direction $p\in S^4 \cap\{x_1=0\} \subset \R^5$, we can find an orbit contained in $\partial B_{r(\e)}(0)$ such that all the corresponding solutions to \eqref{PAC} converge to ground state solutions as $t \to +\infty$. This is because the geodesic in $\partial B_{r(\e)}(0)$ joining $\pm r(\e) e_1$ through $r(\e)p$ must contain a point $a$ such that $\mathscr{S}(a)$ does not converge to $\pm 1$. Since ground state solutions have Morse index $1$, it seems feasible that this is the unique such orbit that intersects this geodesic, and that altogether these orbits parametrize a $3$-sphere in the intersection of the unstable manifold of $u_\e^{-\infty}$ and the stable manifold of the set of ground state solutions. 

An interesting consequence of Proposition \ref{forward} is that one can produce a $1$-sweepouts for the energy functional $E_\e$ (in the sense of \cite{GG}), for small $\e>0$, using the gradient flow of the energy and the critical points $\pm u_{\e}^{-\infty}$ that vanish on the Clifford torus $T_c$. By studying the forward limits of all solutions $\mathscr{S}(a)$ (not only those in the orbits $\mathcal{O}_\e$), one may expect to be able to use this construction to study the second min-max critical value $c_\e(2)$ for the Allen-Cahn equation, in connection with the $2$-width of the sphere, see e.g. \cite{MNMorse}. We plan to address these questions in future work.

\bibliography{main}
\bibliographystyle{acm}

\end{document}